\DeclareMathAlphabet{\scr}{U}{rsfs}{m}{n}
\def\draftdate{January 1, 2011}
\newcommand{\LC}[1][n]{\oC_{#1}}
\newcommand{\EnR}[1][n]{$E_{#1}$ $R$}
\newcommand{\EnH}[1][n]{$E_{#1}$ $H$}
\chardef\bs=`\\ 
\let\overto\xrightarrow
\newcommand{\h}{\mathop{\mathrm{Ho}}\nolimits}
\newcommand{\ARH}{\aA\LC^{R/H}}
\newcommand{\AHH}{\aA\LC^{H/H}}
\newcommand{\NH}{\aN\LC^{H}}
\newcommand{\QC}[1][n]{H_{\LC[#1]}}
\newcommand{\QH}[1][n]{H^{\LC[#1]}}
\newcommand{\NC}[1][n]{\widetilde\oC_{#1}}
\newcommand{\tB}{\tilde B}
\newcommand{\lsmaR}{\sma^{\mathbf{L}}_{R}}
\newcommand{\rI}{I^{\mathbf{R}}}
\newcommand{\lQ}{Q^{\mathbf{L}}}
\newcommand{\ps}[2]{#1[#2]}
\newcommand{\del}{\partial}
\mathchardef\varDelta="7101
\newcommand{\DDelta}{{\mathbf \varDelta}}
\DeclareMathOperator{\sd}{\mathrm{sd}_{2}}
\DeclareMathOperator{\Sd}{\mathrm{Sd}_{2}}
\newcommand{\ssdot}{\bullet}
\newcommand{\subdot}{_{\ssdot}}
\newcommand{\bF}{{\mathbb{F}}}
\newcommand{\bZ}{{\mathbb{Z}}}
\let\catsymbfont\mathfrak 
\newcommand{\aA}{{\catsymbfont{A}}}
\newcommand{\aD}{{\catsymbfont{D}}}
\newcommand{\aN}{{\catsymbfont{N}}}
\let\opsymbfont\mathcal 
\newcommand{\oC}{{\opsymbfont{C}}}
\newcommand{\oQ}{{\opsymbfont{Q}}}
\newcommand{\iso}{\cong}     
\newcommand{\sma}{\wedge}    
\renewcommand{\to}{\mathchoice{\longrightarrow}{\rightarrow}{\rightarrow}{\rightarrow}}
\newcommand{\from}{\mathchoice{\longleftarrow}{\leftarrow}{\leftarrow}{\leftarrow}}
\def\quickop#1{\expandafter\DeclareMathOperator\csname #1\endcsname{#1}}
\DeclareMathOperator*{\colim}{colim}
\newtheorem{thm}[equation]{Theorem}
\newtheorem{cor}[equation]{Corollary}
\newtheorem{lem}[equation]{Lemma}
\newtheorem{prop}[equation]{Proposition}
\theoremstyle{definition}
\newtheorem{defn}[equation]{Definition}
\theoremstyle{remark}
\newtheorem{rem}[equation]{Remark}
\numberwithin{equation}{section}
\newcommand{\term}[1]{\emph{#1}}
\long\def\ignore#1\endignore{\relax}
\begin{document}

\title{The Multiplication on $BP$}
\author{Maria Basterra}
\address{Department of Mathematics, University of New Hampshire, Durham, NH}
\email{basterra@math.unh.edu}
\author{Michael A. Mandell}
\address{Department of Mathematics, Indiana University, Bloomington, IN}
\email{mmandell@indiana.edu}
\thanks{The second author was supported in part by NSF grant DMS-0804272}

\date{\draftdate}

\subjclass{Primary 55P43; Secondary 55N22, 55S35}

\begin{abstract}
$BP$ is an $E_{4}$ ring spectrum.  The $E_{4}$ structure is unique up
to automorphism.
\end{abstract}

\maketitle

\section{Introduction}

For each prime $p$, Brown and Peterson \cite{BPBP} constructed a
spectrum $BP$ to capture the non-Bockstein part of the Steenrod
algebra.  In the decades since, $BP$ has become fundamental
in the study of stable homotopy theory, largely because it is the complex
oriented ring spectrum that has the universal $p$-typical formal group
law.  The complex cobordism spectrum $MU$ has the universal formal
group law, and its localization $MU_{(p)}$ splits as a sum of shifted
copies of $BP$.  Working $p$-locally, most statements involving $MU$
then have equivalent but simpler formulations in terms of $BP$.

One fundamental difference between $BP$ and $MU_{(p)}$ involves the
coherence of the multiplicative structure.  The coherence of the
multiplication on $MU$ is well-understood: As a Thom spectrum, $MU$
has a canonical (even prototypical) structure of an $E_{\infty}$ ring
spectrum \cite{mayeinf}. This structure arises from a geometric model
of $MU$ completely independent of formal group law considerations, and
$BP$ does not admit a corresponding construction.  In fact, recent
work of Johnson and Noel \cite{JohnsonNoel} finds a fundamental
incompatibility between the $E_{\infty}$ ring structure on $MU$ and
$p$-typical orientations.

In this paper, we study the coherence of the multiplication on $BP$.
Previous work by Goerss \cite{GoerssObsMU},
Lazarev \cite{lazarev}, and the authors (unpublished) shows that $BP$
is an  $A_{\infty}$ (or $E_{1}$) ring spectrum.  In this paper, we
show that $BP$ admits an $E_{4}$ ring structure, and that this
structure is unique up to automorphism.

\begin{thm}\label{mainthm}
$BP$ is an $E_{4}$ ring spectrum; the $E_{4}$ ring structure is unique
up to automorphism in the homotopy category of $E_{4}$ ring spectra.
\end{thm}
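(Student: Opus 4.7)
The plan is to construct the $E_4$ ring structure on $BP$ by obstruction theory, leveraging the known $E_\infty$ (hence $E_4$) ring structure on $MU_{(p)}$. Recall that $MU_{(p)}$ splits as a wedge of shifted copies of $BP$ as a spectrum, and that $BP$ can be obtained from $MU_{(p)}$ as a homotopy ring spectrum by killing the polynomial generators $x_i \in \pi_{2i} MU_{(p)}$ for those $i$ not of the form $p^k - 1$. The goal is to realize this construction inside the category of $E_4$-algebras under $MU_{(p)}$, and to verify that the resulting object has the homotopy type of $BP$.

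First I would pick a cofibrant $E_4$-algebra model $R$ of $MU_{(p)}$ and build a tower $R = R_0 \to R_1 \to R_2 \to \cdots$ where $R_n$ is obtained from $R_{n-1}$ by attaching a free $E_4$-cell along a representative of the next generator $x_{i_n}$ to be killed. The cell attachments themselves are unobstructed; the content lies in verifying that the colimit $R_\infty$ has the homotopy type of $BP$. For this I would filter $R_\infty$ by the cell tower and analyze the associated spectral sequence converging to $\pi_* R_\infty$, whose input involves derived indecomposables of free $E_4$-algebras together with Dyer--Lashof-type operations coming from $H_*(\mathcal{C}_4;\mathbb{F}_p)$ acting on the attaching classes.

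The main obstacle is controlling these $\mathcal{C}_4$-operations. The work of Johnson and Noel indicates that the analogous construction cannot succeed at the $E_\infty$ level, because there the available operations on the attaching classes force the presence of homotopy outside $\pi_* BP$. At the $E_4$ level, however, the operations parametrized by $H_*(\mathcal{C}_4;\mathbb{F}_p)$ have bounded complexity, and the expectation is that the extra classes produced at each stage either already lie in the image of $\pi_* MU_{(p)}$ or are killed by subsequent cells further up the tower. Carrying this out requires a careful degree-counting argument together with a vanishing statement for the $E_4$ operadic Andr\'e--Quillen cohomology $H^{\mathcal{C}_4}(R_n;\Sigma^{*} BP)$ in the relevant bidegrees; this calculation will be the computational heart of the proof.

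For uniqueness, given two $E_4$-structures on $BP$, I would attempt to construct a comparison equivalence in the homotopy category of $E_4$ ring spectra inductively up the same cell tower. At stage $n$, the obstruction to extending a map from $R_{n-1}$ lies in $H^{\mathcal{C}_4}(BP;\Sigma^{2i_n+1} BP)$, and the obstruction to uniqueness of the extension lies in $H^{\mathcal{C}_4}(BP;\Sigma^{2i_n} BP)$. Both vanish by the same computation used for existence (shifted by one degree), and a standard formal argument then promotes the resulting equivalence to an automorphism of $E_4$ ring structures in the homotopy category of $E_4$ ring spectra.
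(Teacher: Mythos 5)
Your proposal is a genuinely different strategy from the paper's, and as written it has a real gap at its center.

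The paper works Postnikov-stage by Postnikov-stage: it shows that the spectrum-level $k$-invariant $k^{0}_{2n}$ of $BP$ lifts to an $E_{4}$ $k$-invariant by pulling back the $E_{4}$ $k$-invariant of $MU_{(p)}$ along an inductively constructed $E_{4}$ map $\ps{BP}{2n-1}\to\ps{MU_{(p)}}{2n-1}$ and a wrong-way splitting $s\colon MU_{(p)}\to BP$. The whole argument is driven by Lemma~\ref{lemmain}, a vanishing result for $E_{4}$ Quillen cohomology of the truncations $\ps{A}{2n+1}$ with coefficients in $\bZ_{(p)}$, established via the iterated bar spectral sequences of Section~\ref{secmultss} and the Dyer--Lashof compatibility of Theorems~\ref{thmolddl} and~\ref{thmbardl}. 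Notably, the output is only that $BP$ is an $E_{4}$ $S$-algebra with $E_{4}$ maps to and from $MU_{(p)}$; the paper does not produce $BP$ as an $E_{4}$ $MU_{(p)}$-algebra.

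Your route instead attaches free $E_{4}$-cells in $MU_{(p)}$-algebras to kill the generators $x_{i}$ with $i\neq p^{k}-1$, aiming to construct $BP$ as an $E_{4}$ $MU_{(p)}$-algebra. This is a strictly stronger conclusion, and that is exactly where the gap lies. At the $A_{\infty}$ level, killing a regular sequence gives the expected quotient on $\pi_{*}$ because the free $E_{1}$-algebra on a cell is the tensor algebra, and the pushout filtration collapses. At the $E_{4}$ level, the free $E_{4}$-algebra on a cell in degree $2i+1$ has homotopy freely generated (mod $p$) by iterated admissible Dyer--Lashof operations $Q^{j}$ in the range allowed by the little $4$-cubes operad, and these produce classes in \emph{both} parities. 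After the first cell attachment the homotopy of $R_{1}$ already contains odd-degree classes that are not in the image of $\pi_{*}MU_{(p)}$ and have no reason to die under subsequent attachments along \emph{even}-degree generators. Your sentence asserting that these extra classes are absorbed or killed later is the entire content of the theorem, and you give no mechanism for it; the degree-counting argument you gesture at would have to handle an infinite forest of admissible monomials in $Q^{j}$'s at every stage. The reference to Johnson--Noel is also not an accurate summary of what they prove (it concerns $p$-typical orientations and $H_\infty$ structure, not a direct obstruction to such a cell tower), so it does not supply the control you need.

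There is also a confusion of roles in your use of Andr\'e--Quillen cohomology: free-cell attachment is unobstructed, as you say, so $H^{\oC_{4}}(R_{n};\Sigma^{*}BP)$ is not the relevant obstruction group for the existence tower; what you actually need is a computation of $\pi_{*}R_{\infty}$, which is a homotopy-group calculation, not a cohomological vanishing statement. Your uniqueness paragraph then reuses the same $H^{\oC_{4}}(BP;\Sigma^{*}BP)$ groups, but in the paper the obstructions to comparing $E_{4}$ structures live in $\QC[4]^{q+1}(\ps{BP}{q-1};\pi_{q}BP)$ with Eilenberg--Mac Lane coefficients, which is what makes the bar-construction computations tractable; working with coefficients in $BP$ itself is a harder problem you have not addressed. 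The Postnikov approach of the paper is precisely designed to sidestep the free-$E_{4}$-cell explosion by working one truncation at a time with controlled coefficients, and I would encourage you to compare the two strategies carefully: the cell-attachment version, if it could be made to work, would give the stronger statement that $BP$ is $E_{4}$ over $MU_{(p)}$, but no argument you have sketched closes the gap.
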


As part of the argument, we construct an idempotent splitting of
$MU_{(p)}$ as an $E_{4}$ ring spectrum with the $E_{4}$ ring spectrum
$BP$ as the unit summand.  We do not know if this idempotent can be
chosen to be the Quillen idempotent; we hope to return to this
question in a future paper.

One of the main reasons for interest in $E_{\infty}$ structures is
that the category of modules over an $E_{\infty}$ ring spectrum has a
symmetric monoidal smash product.  The main result of the paper
\cite{E1E2E3E4} is that for a symmetric monoidal structure on the
homotopy category of modules, an $E_{4}$ structure suffices.  Thus, applying the
main theorem of \cite{E1E2E3E4} to the previous theorem, we obtain the
following corollary. 

\begin{cor}
The derived category of $BP$-modules has a symmetric monoidal smash
product $\sma_{BP}$. 
\end{cor}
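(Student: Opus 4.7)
The plan is to invoke the two ingredients already assembled in the introduction. By Theorem~\ref{mainthm}, $BP$ carries an $E_{4}$ ring spectrum structure, so it qualifies as input to the machinery of \cite{E1E2E3E4}. The main theorem of \cite{E1E2E3E4} asserts precisely that, for any $E_{4}$ ring spectrum $R$, the derived (homotopy) category of $R$-modules inherits a symmetric monoidal smash product $\sma_{R}$ refining the associative smash product that already exists for an $E_{1}$-ring. Applying this to $R = BP$ with the $E_{4}$ structure produced by Theorem~\ref{mainthm} yields the claimed symmetric monoidal product $\sma_{BP}$ on the derived category of $BP$-modules.

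The only real content to verify is that the hypotheses of the cited theorem are satisfied by $BP$. Beyond the $E_{4}$ structure itself, these hypotheses are the standard point-set assumptions (for example, being an $E_{4}$-algebra in a suitable model category of spectra such as $S$-modules or symmetric spectra) under which \cite{E1E2E3E4} is formulated. I would briefly indicate that the $E_{4}$ structure constructed in Theorem~\ref{mainthm} is produced in such a model, so that no additional rectification is needed before quoting \cite{E1E2E3E4}.

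There is no real obstacle here: the corollary is a direct specialization of \cite{E1E2E3E4} to the $E_{4}$ ring spectrum of Theorem~\ref{mainthm}, and nothing further needs to be done beyond noting the compatibility of frameworks. Consequently the proof is a one-line appeal, and no new arguments or calculations are required.
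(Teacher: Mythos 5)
Your proof matches the paper's own reasoning: the corollary is stated as an immediate consequence of Theorem~\ref{mainthm} combined with the main theorem of \cite{E1E2E3E4}, which is exactly the one-line appeal you make. The brief remark about compatibility of model-category frameworks is reasonable due diligence but not something the paper dwells on.
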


Richter \cite{RichterBP} presents a different approach to coherence of
the multiplication of $BP$ using the $n$-stage hierarchy of Robinson
\cite{RobinsonInvent}, which is a homotopy commutative generalization
of the $A_{n}$ hierarchy of Stasheff \cite{StasheffAn}.  In this
terminology, \cite{RichterBP} shows that $BP$ has a
${(2p^2+2p-2)}$-stage structure (which in particular restricts to an
$A_{2p^2+2p-2}$ but not $A_{\infty}$ structure).  The relationship of
this result to Theorem~\ref{mainthm} and the relationship of the
$n$-stage hierarchy to the $E_{n}$ hierarchy are not understood
at the present time.

\subsection*{Outline}
For the proof of Theorem~\ref{mainthm}, we apply the Postnikov tower
obstruction theory for $E_{4}$ ring spectra pioneered by Kriz
and Basterra~\cite{mbthesis}  in the $E_{\infty}$
case.  We review the details of this obstruction theory in
Sections~\ref{seco1}--\ref{seco2} before applying it to prove the
main theorem in Section~\ref{secmainthm}.  Finally, 
Sections~\ref{secdlss} and~\ref{secsubdiv} prove some of the
properties of the spectral sequences for computing Quillen homology and
cohomology stated in Section~\ref{secmultss}.

\subsection*{Conventions}
Although we state most results in homotopy theoretic terms that are
independent of models, where the proofs require models, we will work
in the following context.  We use the category of EKMM $S$-modules
\cite{ekmm} as the basic category of spectra, and we understand
$S$-algebras and commutative $S$-algebras in this context.  For
$E_{n}$ structures, we use the Boardman-Vogt little $n$-cubes operad
$\LC$.  In this context, for a commutative $S$-algebra $R$, an $E_{n}$
$R$-algebra consists of an $R$-module $A$ and maps of $R$-modules
\[
\LC(m)_{+}\sma_{\Sigma_{m}} \underbrace{(A\sma_{R}\dotsb
\sma_{R}A)}_{m\text{ factors}}\to A
\]
satisfying the usual associativity and unit properties.

\subsection*{Acknowledgments}
The second author would like to thank Andrew Blumberg, Tyler Lawson,
and Jim McClure for helpful comments.

\section{Quillen Cohomology of $E_{n}$ Algebras}\label{seco1}

For the obstruction theory for $E_{n}$ algebra in Section~\ref{seco2},
the obstruction groups take values in certain Quillen cohomology
groups.  In this section, we review the construction of these groups
in the special case in which we apply them.  For this, we fix a
connective commutative $S$-algebra $R$ and a connective commutative
$R$-algebra $H$, and we consider the category of $E_{n}$ $R$-algebras
lying over $H$. In practice (in Section~\ref{seco2}), $H$ will be an
Eilenberg--Mac~Lane spectrum $HZ$ for a commutative $\pi_{0}R$-algebra
$Z$, but we 
do not need to assume that 
here.  As we review in this section, the advantage of this setup is
that Quillen cohomology with coefficients in an $H$-module becomes a
cohomology theory and has a corresponding homology theory, each
satisfying the appropriate analog of the Eilenberg--Steenrod axioms.

\begin{defn}
Let $\ARH$ denote the category of $E_{n}$ $R$-algebras lying over $H$:
An object of $\ARH$ is an $E_{n}$ $R$-algebra $A$ together with a map
of $E_{n}$ $R$-algebras $\epsilon \colon A\to H$; a morphism of $\ARH$
is a map of $E_{n}$ $R$-algebras $A\to A'$ such that the composite map
$A\to A'\to H$ is $\epsilon$.
\end{defn}

For an $H$-module $M$, we make $H\vee M$ into an object of $\ARH$ with
the ``square zero'' multiplication:  The structure maps
\[
\LC(m)_{+}\sma_{\Sigma_{m}}(H\vee M)^{(m)}
\to (H\vee M)^{(m)}/\Sigma_{m} \to H\vee M
\]
are induced by the multiplication on $H$ and the $H$-module structure
on $M$ on the summands with one or fewer factors of $M$ and the
trivial map on the summands with two or more factors of $M$. The
trivial map $M\to *$ induces the map $H\vee M\to H$.

\begin{defn}
For $A$ in $\ARH$, let $\QC^{*}(A;M)=\h\ARH(A,H\vee \Sigma^{*}M)$,
where $\h\ARH$ denotes the homotopy category obtained from $\ARH$ by
formally inverting the weak equivalences.
\end{defn}

We note that $\QC^{*}(A;M)$ is actually a graded abelian group and in
fact a $\pi_{*}H$-module.  The map of $H$-modules $M\vee M\to M$
induces a map in $\ARH$ from $H\vee (M\vee M)$ to $H\vee M$, which
induces the natural addition since $H\vee (M\vee M)$ is isomorphic in
$\h\ARH$ to the product of two copies of $H\vee M$.  Likewise an
element of $\pi_{q}H$ induces a map in the derived category of
$H$-modules $\Sigma^{q} M\to M$, which induces a
map $H\vee \Sigma^{p}M\to H\vee \Sigma^{p-q}M$ in $\h\ARH$, and hence
a natural map $\QC^{p}(A;M)\to \QC^{p-q}(A;M)$.

We can also define relative groups as follows.  For an object $A$ in
$\ARH$, we consider $\ARH\bs A$, the under-category of $A$ in $\ARH$.
We regard $H\vee M$ as an object of $\ARH\bs A$ using the map $A\to
H$.  For $X$ in $\ARH\bs A$, we define the relative Quillen cohomology
group as
\[
\QC^{*}(X,A;M)=\h(\ARH\bs A')(X,H\vee \Sigma^{*}M)
\]
where $A'$ is a cofibrant approximation of $A$.  Using these relative
groups and a connecting morphism $\delta \colon \QC^{*}(A;M)\to
\QC^{*+1}(X,A;M)$ described below, we obtain a cohomology theory.

\begin{thm}
The functors $\QC^{*}(-;M)$ and connecting morphism $\delta$ define a
cohomology theory on the
category $\ARH$ satisfying the following version of the
Eilenberg--Steenrod Axioms:
\begin{enumerate}
\item (Homotopy) If $(X,A)\to (Y,B)$ is a weak equivalence of pairs,
then the induced map $\QC^{*}(Y,B;M)\to \QC^{*}(X,A;M)$ is an isomorphism of
graded abelian groups.
\item (Exactness) For any pair $(X,A)$, the sequence
\[
\cdots \to \QC^{n}(X,A;M)\to \QC^{n}(X;M)\to
\QC^{n}(A;M)\overto{\delta} \QC^{n+1}(X,A;M) 
\to \cdots
\]
is exact.
\item (Excision) If $A$ is cofibrant, $A\to B$ and $A\to X$ are
cofibrations, and $Y$ is the pushout $X\amalg_{A}B$, then the map of
pairs $(X,A)\to (Y,B)$ induces an isomorphism of graded abelian groups
$\QC^{*}(Y,B;M)\to \QC^{*}(X,A;M)$.
\item (Product) If $\{X_{\alpha}\}$ is a set of
cofibrant objects and $X$ is the coproduct, then the natural map
$\QC^{*}(X;M)\to \prod \QC^{*}(X_{\alpha};M)$ is an isomorphism. 
\end{enumerate}
\end{thm}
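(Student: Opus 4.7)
The plan is to verify each of the four axioms by combining the representability of $\QC^*(-;M)$ by the sequence $\{H\vee \Sigma^* M\}$ with the model-category structure on $\ARH$.

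First I would dispose of Homotopy and Product, both of which are formal consequences of working in a homotopy category. Axiom (i) is immediate in the absolute case since $\QC^*(A;M)$ is by definition a morphism set in $\h\ARH$. For the relative version, I would take cofibrant replacements $A'\to B'$ compatible with the given weak equivalence $A\to B$ and appeal to the fact that base change along a weak equivalence of cofibrant objects is a Quillen equivalence of under-categories, hence induces isomorphisms on the derived hom into $H\vee \Sigma^* M$. Axiom (iv) is similarly formal: after cofibrant replacement, the coproduct in $\ARH$ models the derived coproduct, and any representable contravariant functor takes coproducts to products of $\pi_* H$-modules.

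The substantive content lies in Exactness and Excision, for which the key reduction is to identify the relative group $\QC^n(X,A;M)$ with the absolute Quillen cohomology of a quotient construction. Choosing a cofibration $A'\to X$ modelling $A\to X$, I would form the pushout $\bar X=X\amalg_{A'}H$ in $\ARH$. The universal property of this pushout, together with the canonical inclusion $H\hookrightarrow H\vee \Sigma^n M$ serving as basepoint, yields a natural bijection $\QC^n(X,A;M)\iso \QC^n(\bar X;M)$. With this in hand, the connecting morphism $\delta$ and the long exact sequence of (ii) arise by applying $\h\ARH(-,H\vee \Sigma^* M)$ to the Puppe sequence extending the cofibration $A'\to X\to \bar X$, while Excision (iii) reduces to the pushout associativity $(X\amalg_A B)\amalg_B H\iso X\amalg_A H$, under which the pairs $(Y,B)$ and $(X,A)$ yield isomorphic quotients in $\h\ARH$.

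I expect the main obstacle to be the delooping property underlying the Puppe sequence in (ii). Concretely, in the pointed category of objects of $\ARH$ equipped with a section over $H$, one must show that the derived loop functor takes $H\vee \Sigma^{n+1}M$ to an object weakly equivalent to $H\vee \Sigma^n M$ with its square-zero multiplication. This is the core reason square-zero extensions represent Quillen cohomology, and I would expect to prove it via a bar-construction analysis, exploiting that the $E_n$ operad acts through the augmentation on those summands of $H\vee M$ involving more than one factor of $M$. Once this delooping is in place, the remaining verifications reduce to standard model-category manipulations.
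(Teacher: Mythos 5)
Your reduction of the relative group to an absolute one contains a gap. You form $\bar X = X\amalg_{A'}H$ as a pushout in $\ARH$ and assert a natural bijection $\QC^{n}(X,A;M)\iso\QC^{n}(\bar X;M)$. But in $\ARH$ the object $H$ is terminal, not initial (the initial object is $R$), so $\bar X$ is not a cofiber in the pointed sense. Unwinding the definition, $\QC^{n}(\bar X;M)=\h\ARH(\bar X, H\vee\Sigma^{n}M)$ is built from pairs of maps $f\colon X\to H\vee\Sigma^{n}M$ and $g\colon H\to H\vee\Sigma^{n}M$ agreeing on $A'$, and the second component $g$ is not forced to be the unit inclusion: $\h\ARH(H, H\vee\Sigma^{n}M)=\QC^{n}(H;M)\iso D^{n}(\rI(H\lsmaR H);M)$, which is not in general zero (it vanishes when $R\to H$ is a weak equivalence, but not otherwise). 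The relative group $\QC^{n}(X,A;M)$, by contrast, fixes the restriction of $f$ to $A'$ to be the trivial composite $A'\to H\to H\vee\Sigma^{n}M$ and carries no independent $g$-datum. Declaring the inclusion $H\hookrightarrow H\vee\Sigma^{n}M$ to ``serve as basepoint'' does not repair this: nothing in the pushout universal property in $\ARH$ forces a map out of $\bar X$ to use that particular $g$.

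The paper resolves exactly this difficulty by first base-changing along the Quillen left adjoint $H\sma_{R}(-)$ into the doubly-augmented category $\AHH$, giving $\QC^{*}(A;M)\iso\h\AHH(H\lsmaR A, H\vee\Sigma^{*}M)$. In $\AHH$ the unit $H$ is the zero object, so the pushout $Cf=(H\sma_{R}X')\amalg_{(H\sma_{R}A')}H$ formed there has the genuine cofiber universal property and $\QC^{*}(X,A;M)\iso\h\AHH(Cf, H\vee\Sigma^{*}M)$ holds with no extraneous term; the Puppe sequence and connecting morphism then live entirely in $\AHH$. You correctly identify the delooping $\Omega(H\vee\Sigma^{n+1}M)\simeq H\vee\Sigma^{n}M$ as the essential remaining input, but a bar-construction analysis is not needed: Theorems~\ref{thmaugqe} and~\ref{thmzq} provide a Quillen equivalence $K\dashv I$ between $\NH$ and $\AHH$ and a Quillen adjunction $Q\dashv Z$ with $Z$ a right Quillen functor, and since $H\vee\Sigma^{*}M=KZ\Sigma^{*}M$, the delooping follows formally from $\Omega\Sigma^{n+1}M\simeq\Sigma^{n}M$ in $H$-modules together with the facts that the derived functors of $I$ and $Z$ preserve homotopy limits and that $\mathbf{L}K$ is an equivalence. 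Your treatment of Homotopy and Product is fine, and the reduction of Excision to associativity of pushouts is in line with the paper once the cofiber is formed in $\AHH$ rather than $\ARH$.
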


To construct the connecting homomorphism, prove the previous theorem,
and construct the corresponding homology theory requires the
``based'' version of the construction obtained by working over and
under the same commutative $S$-algebra.  The functor $H\sma_{R}(-)$
takes \EnR-algebras lying over $H$ to \EnH-algebras lying over $H$
and is left adjoint to the forgetful functor.  Writing $H\lsmaR (-)$
for the left derived functor, we get a natural isomorphism
\[
\QC^{*}(A;M)=\h\ARH(A,H\vee \Sigma^{*}M)\iso 
\h\AHH(H\lsmaR A,H\vee \Sigma^{*}M).
\]
In addition, for $f\colon A\to X$, we can then identify
$\QC^{*}(X,A;M)$ as 
\[
\h(\AHH\bs (H\sma_{R}A'))(H\lsmaR X,H\vee \Sigma^{*}M) \iso
\h\AHH(Cf, H\vee \Sigma^{*}M)
\]
where $Cf$ denotes the ``cofiber''
\[
Cf=(H\sma_{R}X')\amalg_{(H\sma_{R}A')}H
\]
formed as the pushout of \EnH-algebras,
where $A'\to X'$ is a cofibration modeling the map $A\to X$.  The
homotopy cofiber of the inclusion of $X'$ in $Cf$ is equivalent to the
suspension of $A'$ in the homotopy category of $\AHH$; this
produces the connecting homomorphism satisfying the Exactness Axiom.
The remaining axioms are clear from the construction.

To construct the corresponding homology theory, we switch from the
category $\AHH$ to the category $\NH$ of ``non-unital'' \EnH-algebras. These
are the algebras of $H$-modules over the operad $\NC$ where
\[
\NC(m)=\begin{cases}
\emptyset&m=0\\\LC(m)&m>0.
\end{cases}
\]
We obtain a functor $K(-)=H\vee(-)$ from $\NH$ to $\AHH$ by attaching
a new unit; this is left adjoint to the functor $I$ from $\AHH$ to
$\NH$ which takes the (point-set) fiber of the augmentation $A\to H$.
Since 
the functor $I$ preserves fibrations and acyclic fibrations, we see
that the adjoint
pair $K,I$ forms a Quillen adjunction. Since we can calculate the
effect on homotopy groups of $K$ on arbitrary non-unital
\EnH-algebras and of $I$ on fibrant objects of $\AHH$, we see
that when $A$ is fibrant, a map of augmented $\LC$-algebras $KN\to A$
is a weak equivalence if and only if the adjoint map $N\to IA$ is a
weak equivalence; in other words, $K,I$ is a Quillen equivalence.

\begin{thm}\label{thmaugqe}
The functors $K$ and $I$ form a Quillen equivalence between the
category of non-unital \EnH-algebras and the category of \EnH-algebras
lying over $H$ (the category of augmented \EnH-algebras).
\end{thm}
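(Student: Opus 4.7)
The plan is to carry out the three-step program sketched just above the statement: verify that $(K,I)$ is an adjoint pair, that it forms a Quillen adjunction, and finally that the derived unit and counit are weak equivalences on cofibrant and fibrant objects respectively, which by a standard triangle-identity argument implies the Quillen equivalence.

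For the adjunction itself, I would unpack the universal property. A morphism $KN = H \vee N \to A$ in $\AHH$ is a map of $E_n$ $H$-algebras both under and over $H$; restricting along the summand inclusion $N \hookrightarrow H \vee N$ yields an $H$-module map $N \to A$ whose composite with the augmentation $A \to H$ is trivial and which intertwines the $\NC$-operations on the two sides. Conversely, given any map $N \to IA$ in $\NH$, the unit of $A$ supplies the required extension $H \vee N \to A$ in $\AHH$. This produces the natural bijection
\[
\AHH(KN,A) \iso \NH(N,IA).
\]

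For the Quillen adjunction, I would invoke the model structures on $\NH$ and $\AHH$ inherited from the model category of $H$-modules, in which fibrations and weak equivalences are detected on underlying modules. The functor $I$ is the point-set pullback of the augmentation $A \to H$ along $\ast \to H$, so it preserves fibrations and acyclic fibrations on underlying modules and hence on $E_n$-algebras.

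For the Quillen equivalence, I would compute the unit and counit directly. The unit $\eta_N \colon N \to I(H \vee N)$ is the identity on underlying modules, since the point-set fiber of the projection $H \vee N \to H$ is precisely $N$. For any fibrant $A$ the counit $\epsilon_A \colon K(IA) = H \vee IA \to A$ has as its underlying $H$-module map the sum of the unit $H \to A$ and the fiber inclusion $IA \to A$; because the augmentation $A \to H$ is split by the unit, this underlying map is an isomorphism of $H$-modules, so $\epsilon_A$ is automatically a weak equivalence. With $\eta$ always an isomorphism and $\epsilon$ a weak equivalence on fibrant objects, the usual triangle-identity argument shows that for cofibrant $N$ and fibrant $A$, a map $KN \to A$ is a weak equivalence if and only if its adjoint $N \to IA$ is one. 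I expect the main obstacle to be the model-categorical preliminaries — specifically confirming the existence of operadic model structures on $\NH$ and $\AHH$ with fibrations and weak equivalences detected on underlying $H$-modules — since once those are in hand both $\eta$ and $\epsilon$ are (underlying) isomorphisms and the Quillen equivalence follows with essentially no further work.
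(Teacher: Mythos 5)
The overall structure of your argument matches the paper's — establish the adjunction, show $I$ preserves fibrations and acyclic fibrations to get a Quillen adjunction, and then verify the Quillen equivalence criterion — but there is a genuine error in your treatment of the counit. You assert that for fibrant $A$ the map $K(IA)=H\vee IA\to A$ is an \emph{isomorphism} of underlying $H$-modules, on the grounds that $A\to H$ is split by the unit. That reasoning is valid in an additive or abelian category, where a split short exact sequence yields a biproduct decomposition, but the category of $H$-modules in the EKMM framework has no point-set additive structure: a split fibration $IA\to A\to H$ does \emph{not} exhibit $A$ as the coproduct $H\vee IA$ at the point-set level (this already fails for pointed sets). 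The same concern applies, more mildly, to your claim that the unit $N\to I(H\vee N)$ is the identity. What one can actually say — and what the paper says — is that the splitting yields an isomorphism on homotopy groups: $\pi_*(KN)\cong\pi_*H\oplus\pi_*N$ for every $N$, and for fibrant $A$ the fibration sequence $IA\to A\to H$ together with the unit section gives $\pi_*(A)\cong\pi_*H\oplus\pi_*(IA)$. From this it follows directly that for fibrant $A$, a map $KN\to A$ in $\AHH$ is a weak equivalence if and only if the adjoint $N\to IA$ is, which is the precise criterion for a Quillen equivalence. So your conclusion is right, but the step from ``split augmentation'' to ``point-set wedge decomposition'' needs to be replaced by the homotopy-group computation; once that substitution is made, the argument lines up with the paper's.
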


The square zero \EnH-algebra $H\vee M$ is $KZM$ for the non-unital
\EnH-algebra $ZM$ given by $M$ with the trivial structure maps.  Thus,
we have the further description of $\QC^{*}(A;M)$ as $\h\NH(\rI
(H\lsmaR A),Z\Sigma^{*}M)$
where $\rI$ denotes the right derived functor of $I$.  The functor $Z$
is a right adjoint, with left adjoint $Q$ the indecomposables functor
defined by the coequalizer
\[
\xymatrix@C-1pc{%
\displaystyle 
\bigvee_{m>0}\LC(m)_{+}\sma_{\Sigma_m}N^{(m)}\mathstrut
\ar[r]<-.5ex>\ar[r]<.5ex>
&N\ar[r]&QN.
}
\]
Here one map is the action map for $N$ and the other map is the
trivial map on the factors for $m>1$ and the map 
\[
\LC(1)_{+}\sma N\to *_{+}\sma N \iso N
\]
on the $m=1$ factor.  Since the functor $Z$ preserves fibrations and
weak equivalences, we see that $Q,Z$ forms a Quillen adjunction.

\begin{thm}\label{thmzq}
The functors $Q$ and $Z$ form a Quillen adjunction between the
category of non-unital \EnH-algebras and the category of $H$-modules.
\end{thm}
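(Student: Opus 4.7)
The plan is to verify the adjunction $Q\dashv Z$ and then observe that $Z$ preserves fibrations and weak equivalences, whence $(Q,Z)$ forms a Quillen adjunction by the standard criterion.

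First I would establish the adjunction using the universal property of the coequalizer defining $QN$. Given an $H$-module $M$ and a map $f\colon N\to ZM$ of non-unital \EnH-algebras, the structure maps of $ZM$ are trivial in arities $m>1$ and in arity $1$ are induced by the projection $\LC(1)_{+}\sma M\to *_{+}\sma M\iso M$. The compatibility condition on $f$ with these structure maps is precisely the statement that $f$, viewed as a map of $H$-modules out of $N$, coequalizes the two parallel arrows in the defining presentation of $QN$: on the $m>1$ summands both composites become trivial (the right one by the structure of $ZM$, the left by the definition of the second arrow in the coequalizer), and on the $m=1$ summand both composites factor through the collapse $\LC(1)_{+}\sma N\to N$. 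Hence $f$ factors uniquely as $N\to QN\to M$ with the second map a morphism of $H$-modules. Conversely, any $H$-module map $g\colon QN\to M$ gives a composite $N\to QN\to M$ which is a morphism of non-unital \EnH-algebras into $ZM$ by the same diagram chase in reverse. These two constructions are mutually inverse natural bijections.

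Next I would verify that $Z$ preserves fibrations and weak equivalences. In the EKMM framework the model structure on $\NH$ is set up so that its fibrations and weak equivalences are created by the forgetful functor to $H$-modules (the little $n$-cubes operad has free $\Sigma_{m}$-actions for $n\geq 1$, so the standard transfer along the free--forgetful adjunction for $\NC$-algebras applies). Since the underlying $H$-module of $ZM$ is $M$, and $Z$ is the identity on underlying morphisms, it follows immediately that $Z$ sends fibrations to fibrations and weak equivalences to weak equivalences. By the standard criterion, $(Q,Z)$ is a Quillen adjunction.

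The only substantive ingredient beyond diagram chasing is the existence of the model structure on $\NH$ in which the forgetful functor creates fibrations and weak equivalences; I would invoke this from the standard operadic-algebra machinery in EKMM spectra rather than reprove it. There is no real obstacle: once the adjunction is unwound via the universal property of $QN$, the Quillen condition is essentially automatic because $Z$ acts trivially on underlying $H$-modules.
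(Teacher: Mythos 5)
Your proposal is correct and takes the same approach as the paper: establish the adjunction $Q \dashv Z$ via the universal property of the coequalizer defining $Q$, then observe that $Z$ preserves fibrations and weak equivalences (because these are detected on underlying $H$-modules and $Z$ is the identity on underlying modules), and conclude by the standard criterion. The paper's own argument is just a brief two-sentence version of this; your unwinding of the coequalizer condition is a faithful elaboration of what it leaves implicit.
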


Writing $\lQ$ for the left derived functor of $Q$, we get reduced
cohomology and homology theories on non-unital \EnH-algebras as follows.

\begin{defn}\label{defnuqc}
For $N$ a non-unital \EnH-algebra, let 
\begin{align*}
D^{*}(N;M)&=\Ext_{H}^{*}(\lQ N,M)\\
D_{*}(N;M)&=\Tor^{H}_{*}(\lQ N,M).
\end{align*}
\end{defn}

In other words, $D^{*}(N;M)$ and $D_{*}(N;M)$ are the homotopy groups
of the derived function $H$-module and derived smash product as in
\cite[IV\S1]{ekmm}.  We then have as a result that 
\[
\QC^{*}(A;M)\iso D^{*}(\rI(H\lsmaR A);M)
\]
and we make the definition 
\[
\QH_{*}(A;M)=D_{*}(\rI(H\lsmaR A);M)
\]
For $A\to X$, we let 
\[
\QH_{*}(X,A;M)=D_{*}(\rI(Cf);M),
\]
for $Cf$ the cofiber as above.  Using the connecting morphism
$\partial$ arising from 
cofibration sequences in $\NH$, we get a homology theory.

\begin{thm}
The functors $\QH_{*}(-;M)$ and connecting morphism $\partial$ define a
homology theory on the
category $\ARH$ satisfying the following version of the
Eilenberg--Steenrod Axioms:
\begin{enumerate}
\item (Homotopy) If $(X,A)\to (Y,B)$ is a weak equivalence of pairs,
then the induced map $\QH_{*}(X,A;M)\to \QH_{*}(Y,B;M)$ is an isomorphism of
graded abelian groups.
\item (Exactness) For any pair $(X,A)$, the sequence
\[
\cdots \to \QH_{n+1}(X,A;M)\overto{\partial} \QH_{n}(A;M)\to
\QH_{n}(X;M) \to \QH_{n}(X,A;M)
\to \cdots
\]
is exact.
\item (Excision) If $A$ is cofibrant, $A\to B$ and $A\to X$ are
cofibrations, and $Y$ is the pushout $X\amalg_{A}B$, then the map of
pairs $(X,A)\to (Y,B)$ induces an isomorphism of graded abelian groups
$\QH_{*}(X,A;M)\to \QH_{*}(Y,B;M)$.
\item (Sum) If $\{X_{\alpha}\}$ is a set of
cofibrant objects and $X$ is the coproduct, then the natural map
$\bigoplus \QH_{*}(X_{\alpha};M)\to \QH_{*}(X;M)$ is an isomorphism. 
\end{enumerate}
\end{thm}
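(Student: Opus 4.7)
The proof is the homological counterpart of the (sketched) cohomology theorem, so the plan is to leverage the identification
\[
\QH_{*}(X,A;M) \iso \Tor^{H}_{*}\bigl(\lQ \rI (Cf),\, M\bigr)
\]
and reduce each axiom to (a) the behavior of the composite left-derived functor $\lQ \circ \rI \circ (H\lsmaR(-))$ on cofibrations, weak equivalences, and coproducts, together with (b) the standard homological properties of $\Tor^{H}_{*}$ on the derived category of $H$-modules (long exact sequences from cofiber sequences, preservation of sums in the first variable).

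For the \emph{Homotopy} axiom, each of $H\lsmaR$, $\rI$, and $\lQ$ preserves weak equivalences between suitably cofibrant/fibrant objects, so a weak equivalence of pairs induces an equivalence of the $H$-modules $\lQ\rI(Cf)$, and hence an isomorphism on $\Tor^{H}_{*}(-,M)$. For \emph{Excision}, the point is a direct set-theoretic calculation: with $A$ cofibrant and $A\to B$, $A\to X$ cofibrations, one has
\[
Cg = (H\sma_{R} Y')\amalg_{H\sma_{R}B'} H \iso (H\sma_{R}X')\amalg_{H\sma_{R}A'} H = Cf,
\]
since the pushout $Y' = X'\amalg_{A'}B'$ and commutation of pushouts give the cancellation, so one even gets an isomorphism on the nose. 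For \emph{Exactness}, a pair $(X,A)$ modeled by a cofibration $A'\to X'$ of cofibrant objects produces a cofibration sequence $H\sma_{R}A'\to H\sma_{R}X'\to Cf$ in $\AHH$; applying $\rI$ and then $\lQ$ yields a cofibration sequence of $H$-modules, and applying $\Tor^{H}_{*}(-,M)$ gives the long exact sequence with $\partial$ as the standard connecting map. For \emph{Sum}, $H\lsmaR$, $\rI$, and $\lQ$ all commute with coproducts in the homotopy category (the first and third as left Quillen functors, the middle because $(K,I)$ is a Quillen equivalence by Theorem~\ref{thmaugqe}), so $\lQ\rI(H\lsmaR X) \homeq \bigvee_{\alpha}\lQ\rI(H\lsmaR X_{\alpha})$, and $\Tor^{H}_{*}(-,M)$ converts wedges to direct sums.

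The one genuinely substantive point is the behavior of $\rI$ on cofibration sequences and coproducts, since $I$ is a right adjoint and $\rI$ is a right derived functor, so this is not automatic from general nonsense. The resolution is Theorem~\ref{thmaugqe}: because $(K,I)$ is a Quillen equivalence, $\rI$ is an equivalence of homotopy categories and in particular preserves all homotopy colimits, including homotopy cofiber sequences and coproducts. Once this is in hand, the rest of the argument is formal and parallels the cohomology proof, with $\Ext_{H}^{*}$ replaced by $\Tor^{H}_{*}$ throughout.
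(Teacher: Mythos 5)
Your proof is correct and follows the approach the paper implicitly takes: the paper constructs $\QH_{*}(X,A;M)$ as $D_{*}(\rI(Cf);M)=\Tor^{H}_{*}(\lQ\rI(Cf),M)$, notes that the connecting morphism arises from cofibration sequences in $\NH$, and leaves the axiom verifications as "clear from the construction"---precisely the details you fill in via the Quillen equivalence $(K,I)$ of Theorem~\ref{thmaugqe}, the left Quillen functors $H\sma_{R}(-)$ and $Q$, and the standard properties of $\Tor^{H}_{*}$. Your observation that $\rI$ preserving homotopy cofiber sequences and coproducts needs Theorem~\ref{thmaugqe} (rather than following from general adjoint nonsense) is exactly the substantive point, and your pasting-of-pushouts cancellation for Excision is the intended argument.
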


For convenience in inductive statements, we let 
\begin{align*}
\QC[0]^{*}(A;M)&=\Ext_{R}^{*}(\rI A,M),\\
\QH[0]_{*}(A;M)&=\Tor^{R}_{*}(\rI A,M),
\end{align*}
the $R$-module cohomology and homology with coefficients in $M$ of the
augmentation ideal; this is a cohomology theory and a homology theory,
respectively, on the category of $R$-modules over $H$.  

As we mentioned in the introduction, in our main application, $H$ will
be an Eilenberg--Mac Lane spectrum $HZ$ for a commutative
$\pi_{0}R$-algebra $Z$. 
For any $Z$-module $L$, $HL$ has a unique structure as an
$HZ$-module.  In this context we will abbreviate notation for the
coefficients by writing $L$ in place of $HL$, e.g., 
\begin{align*}
\QC^{*}(A;L)&= \QC^{*}(A;HL),\\
\QH_{*}(A;L)&= \QH_{*}(A;HL).
\end{align*}

\section{Properties of Quillen Homology and Cohomology}
\label{secmultss}

In the previous section, we reviewed the construction of the Quillen
cohomology groups that we use in  the obstruction theory in the next
section.  In this section, we review some of their fundamental
properties that we need to construct the obstruction theory and to
calculate the obstruction groups.

Although our obstruction theory involves the Quillen cohomology
groups, we use the Quillen homology groups to help work with them.
Definition~\ref{defnuqc} provides the precise relationship between
Quillen homology and Quillen cohomology in this context.
Computationally, applying the universal coefficient spectral
sequences of \cite[IV\S4]{ekmm}, we obtain the following universal
coefficient spectral sequences for Quillen homology and cohomology.

\begin{thm}[Universal Coefficient Spectral Sequences]\label{thmuniv}
Let $A$ be an \EnR-algebra lying over $H$ and let $M$ be an
$H$-module.  There are spectral sequences with 
\begin{align*}
E^{2}_{s,t} &= \Tor^{\pi_{*}H}_{s,t}(\QH_{*}(A;H),\pi_{*}M)\\
E_{2}^{s,t} &= \Ext_{\pi_{*}H}^{s,t}(\QH_{*}(A;H),\pi_{*}M)
\end{align*}
converging strongly to $\QH_{*}(A;M)$ and conditionally to
$\QC^{*}(A;M)$, respectively.  For $A\to X$ a map of 
\EnR-algebras lying over $H$, there are spectral sequences with 
\begin{align*}
E_{2}^{s,t} &= \Tor^{\pi_{*}H}_{s,t}(\QH_{*}(X,A;H),\pi_{*}M)\\
E^{2}_{s,t} &= \Ext_{\pi_{*}H}^{s,t}(\QH_{*}(X,A;H),\pi_{*}M)
\end{align*}
converging strongly to $\QH_{*}(X,A;M)$ and conditionally to
$\QC^{*}(X,A;M)$, respectively.
\end{thm}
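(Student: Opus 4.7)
The plan is to reduce everything to the universal coefficient spectral sequences of EKMM~IV.4 by using the identifications developed in the previous section. Let $N = \lQ(\rI(H\lsmaR A))$, so that $N$ is an $H$-module and, by the identifications just before Definition~\ref{defnuqc},
\[
\QC^{*}(A;M) \iso \Ext_{H}^{*}(N,M), \qquad
\QH_{*}(A;M) \iso \Tor_{*}^{H}(N,M).
\]
Taking $M = H$ gives $\QH_{*}(A;H) \iso \pi_{*}N$ as a $\pi_{*}H$-module. Thus the theorem reduces to the assertion that for an $H$-module $N$ with $\pi_{*}N = \QH_{*}(A;H)$, the derived smash product $N \lsmaH M$ and function $H$-module $F_{H}(N,M)$ admit the stated spectral sequences in $\pi_{*}H$-module $\Tor$ and $\Ext$ of $\pi_{*}N$ and $\pi_{*}M$.

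First I would invoke \cite[IV\S4]{ekmm} applied to the commutative $S$-algebra $H$ and the $H$-modules $N$ and $M$. This directly produces a strongly convergent first-quadrant spectral sequence
\[
E^{2}_{s,t} = \Tor^{\pi_{*}H}_{s,t}(\pi_{*}N,\pi_{*}M) \Longrightarrow \pi_{s+t}(N \lsmaH M)
\]
and a conditionally convergent spectral sequence
\[
E_{2}^{s,t} = \Ext_{\pi_{*}H}^{s,t}(\pi_{*}N,\pi_{*}M) \Longrightarrow \pi_{t-s}F_{H}(N,M).
\]
Substituting $\pi_{*}N = \QH_{*}(A;H)$ and identifying the abutments with $\QH_{*}(A;M)$ and $\QC^{*}(A;M)$ via Definition~\ref{defnuqc} then yields the two spectral sequences in the absolute case.

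For the relative case, I would repeat the same argument with $N$ replaced by $N_{f} = \lQ(\rI(Cf))$, where $Cf$ is the cofiber construction in $\AHH$ recalled earlier in Section~\ref{seco1}. The cofiber sequences in $\AHH$ and $\NH$ show that $\pi_{*}N_{f} = \QH_{*}(X,A;H)$, and the universal coefficient spectral sequences of \cite[IV\S4]{ekmm} applied to $N_{f}$ and $M$ give the claimed spectral sequences converging to $\QH_{*}(X,A;M)$ and $\QC^{*}(X,A;M)$.

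The only real issue to check is that the abstract identifications are functorial enough for the EKMM spectral sequences to apply, i.e., that $N$ (and $N_{f}$) can be taken to be a genuine $H$-module in the model category sense so that \cite[IV\S4]{ekmm} applies verbatim; this follows because $\lQ$ is the left derived functor of the indecomposables functor $Q$ landing in $H$-modules (Theorem~\ref{thmzq}), so its output is cofibrant as an $H$-module up to weak equivalence. Once this point-set bookkeeping is in place, the argument is a direct invocation of the EKMM universal coefficient spectral sequences, and the main obstacle is just ensuring the grading conventions for $\Ext$ match the convention used for $\QC^{*}$.
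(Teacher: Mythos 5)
Your proposal is correct and is essentially identical to the paper's (tacit) proof: the paper introduces Definition~\ref{defnuqc} precisely so that $\QH_{*}(A;M)$ and $\QC^{*}(A;M)$ are the homotopy groups of the derived smash product and derived function $H$-module on the fixed $H$-module $\lQ\rI(H\lsmaR A)$, and then the theorem is a direct application of the universal coefficient spectral sequences of \cite[IV\S4]{ekmm}, exactly as you spell out. The relative case via $Cf$ is also handled the same way.
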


For computing $\QH_{*}(A;H)$, the main result (Theorem~1.3) of
\cite{bmthh} provides an iterative method.  For $A$ a cofibrant
\EnR-algebra lying over $H$, let $N$ be a cofibrant non-unital
\EnH-algebra with a weak equivalence $KN\to H\sma_{R} A$ of
\EnH-algebras lying over $H$.  By definition then $\QH_{*}(A;H)$ is
$\pi_{*}QN$.  Sections~5 and~6 of \cite{bmthh} study the bar
construction $BKN$ and show that it has the structure of an
\EnH[n-1]-algebra and the reduced construction $\tB N$ (constructed by
$K\tB N\simeq BKN$) has the structure of a non-unital
\EnH[n-1]-algebra.  Iterating the bar construction, we have a weak
equivalence  $\tB^{n}N \simeq \Sigma^{n}QN$. In terms of homotopy
groups, we have
\[
\QH_{*-n}(A;H)=\pi_{*}\tB^{n}N,
\]
or more generally with coefficients,
\begin{align*}
\QH_{*-n}(A;M)&=\Tor^{H}_{*}(\tB^{n}N,M),\\
\QC^{*-n}(A;M)&=\Ext_{H}^{*}(\tB^{n}N,M).
\end{align*}
The filtration on the bar construction then gives us a spectral
sequence for computing the Quillen homology and cohomology from the
homology and cohomology of smash powers of $\tB^{n-1}N$.  We state
this in the following form useful for induction. The extra summand of
$\pi_{*}M$ in the statement derives from the fact that $B\tB^{j}N\simeq
H\vee \tB^{j+1}N$, or equivalently, the convention that $(\tB^{j}
N)^{(0)}=H$ rather than $*$. 

\begin{thm}\label{thmbarssE1}
Let $A$ be an \EnR-algebra lying over $H$ and let $N$ be a cofibrant
non-unital \EnH-algebra with a weak equivalence $KN\to H\sma_{R} A$ of 
\EnH-algebras lying over $H$.  For an
$H$-module $M$ and $j<n$, there is a spectral sequence with 
\[
E^{1}_{s,t} = \Tor^{H}_{t}(\underbrace{\tB^{j}N\sma_{H}\dotsb 
\sma_{H}\tB^{j}N}_{s\text{ factors}},M)
\]
converging strongly to $\pi_{*}M\oplus \QH[j+1]_{*-(j+1)}(A;M)$ and
a spectral sequence with 
\[
E_{1}^{s,t} = \Ext_{H}^{t}(\underbrace{\tB^{j}N\sma_{H}\dotsb 
\sma_{H}\tB^{j}N}_{s\text{ factors}},M)
\]
converging conditionally to $\pi_{*}M\oplus \QC[j+1]^{*-(j+1)}(A;M)$.  
\end{thm}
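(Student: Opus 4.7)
The plan is to build both spectral sequences from the skeletal filtration of the bar construction $B\tB^{j}N$. Using the identifications recorded in the discussion preceding the statement,
\[
\QH[j+1]_{*-(j+1)}(A;M)=\Tor^{H}_{*}(\tB^{j+1}N,M), \quad
\QC[j+1]^{*-(j+1)}(A;M)=\Ext_{H}^{*}(\tB^{j+1}N,M),
\]
together with the equivalence $B\tB^{j}N\simeq H\vee \tB^{j+1}N$ noted there, I would first reinterpret the abutments as $\Tor^{H}_{*}(B\tB^{j}N,M)$ and $\Ext_{H}^{*}(B\tB^{j}N,M)$. The $\pi_{*}M$ summand in the statement is exactly the contribution of the unit summand of $B\tB^{j}N$, equivalently the $s=0$ stratum of the bar filtration, where by convention the empty smash power is $H$.

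Next, since $j<n$, the iterated reduced bar $\tB^{j}N$ is a cofibrant non-unital $E_{n-j}$-algebra with $n-j\geq 1$, so $K\tB^{j+1}N\simeq B(K\tB^{j}N)$ is defined as the geometric realization of a simplicial $H$-module whose $s$-simplices are $\tB^{j}N\sma_{H}\dotsb\sma_{H}\tB^{j}N$ ($s$ factors), with face maps induced by the non-unital multiplication on $\tB^{j}N$. Since $(-)\sma_{H}M$ commutes with geometric realization and $F_{H}(-,M)$ commutes with totalization, the standard skeletal filtration yields a first-quadrant homological spectral sequence
\[
E^{1}_{s,t}=\Tor^{H}_{t}\bigl(\underbrace{\tB^{j}N\sma_{H}\dotsb\sma_{H}\tB^{j}N}_{s\text{ factors}},M\bigr)
\]
converging to $\Tor^{H}_{*}(B\tB^{j}N,M)$, and dually a cohomological spectral sequence converging conditionally to $\Ext_{H}^{*}(B\tB^{j}N,M)$.

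The main work is to arrange enough cofibrancy so that the point-set smash powers appearing above already model the derived $\Tor$ and $\Ext$ displayed on the $E^{1}$-page, and to verify strong convergence of the homological spectral sequence. The cofibrancy follows because $N$ was chosen cofibrant as a non-unital $E_{n}$ $H$-algebra and the reduced bar construction of Sections~5 and~6 of \cite{bmthh} preserves cofibrancy through each iteration, so each $\tB^{j}N$ is cofibrant as an $H$-module. Strong convergence of the homological spectral sequence follows from a connectivity estimate: propagating the connectivity hypotheses on $R$, $H$, and $A$ through the bar construction gives an $s$-connective lower bound on the $s$-th stratum, producing the usual vanishing line that bounds the number of contributing columns in each total degree. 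Conditional convergence of the cohomological spectral sequence is immediate from the tower of partial totalizations in the sense of Boardman.
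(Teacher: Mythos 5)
Your proposal is correct and follows essentially the same route the paper takes: both read the spectral sequence off the skeletal filtration of $B\tB^{j}N$, identify the abutment via $K\tB^{j+1}N\simeq B(K\tB^{j}N)$ and $\QH[j+1]_{*-(j+1)}(A;M)\iso\Tor^{H}_{*}(\tB^{j+1}N,M)$, and attribute the extra $\pi_{*}M$ to the unit (filtration~$0$) summand of $B\tB^{j}N\simeq H\vee\tB^{j+1}N$. The paper treats the theorem as a direct consequence of the bar-construction framework from \cite{bmthh} without spelling out the cofibrancy and convergence points you supply; those details are appropriate to include, though note that strong convergence of the homological spectral sequence uses connectivity of $\tB^{j}N$, which ultimately relies on $R$, $H$, and (in the applications) $A$ being connective rather than on the bare hypotheses of the theorem as stated.
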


Under flatness or projectivity hypotheses, we have a good description
of the $E^{2}$ page of the spectral sequence.  

\begin{thm}\label{thmbarssE2}
Let $A$ be an \EnR-algebra lying over $H$ and let $H\to F$ be a map of
commutative $S$-algebras.   For $0\leq j<n$, let $B^{j}_{*}$ be the
associative $\pi_{*}F$-algebra $\pi_{*}F\oplus
\QH[j]_{*-j}(A;F)$.
\begin{enumerate}
\item If $B^{j}_{*}$ is flat over $\pi_{*}F$, then the $E^{2}$ page of
the homological spectral sequence of Theorem~\ref{thmbarssE1} with
coefficients in $F$ is
\[
E^{2}_{s,t}=\Tor_{s,t}^{B^{j}_{*}}(\pi_{*}F,\pi_{*}F).
\]
\item If $B^{j}_{*}$ is projective over
$\pi_{*}F$, then the $E_{2}$ page of
the cohomological spectral sequence of Theorem~\ref{thmbarssE1} is
\[
E_{2}^{s,t}=\Ext^{s,t}_{B^{j}_{*}}(\pi_{*}F,\pi_{*}F).
\]
\end{enumerate}
\end{thm}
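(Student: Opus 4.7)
The plan is to identify the $E^{1}$-page of the homological bar spectral sequence of Theorem~\ref{thmbarssE1} (with $M=F$) with the normalized bar complex of the augmented associative $\pi_{*}F$-algebra $B^{j}_{*}$, and similarly for $E_{1}$ in the cohomological case. Taking $E^{2}$ (respectively $E_{2}$) will then yield the desired identification with $\Tor^{B^{j}_{*}}_{s,t}(\pi_{*}F,\pi_{*}F)$ (respectively $\Ext^{s,t}_{B^{j}_{*}}(\pi_{*}F,\pi_{*}F)$).

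I begin by recalling the construction of the spectral sequence. It arises from the simplicial filtration of the iterated reduced bar construction $\tB^{j+1}N\simeq \tB(\tB^{j}N)$, whose object of $s$-simplices is $(\tB^{j}N)^{\sma_{H}s}$ and whose face maps are built from the non-unital $E_{1}$-multiplication on $\tB^{j}N$, which exists by restricting the $E_{n-j}$-structure produced in \cite{bmthh} (using $j<n$). Applying $-\lsmaH F$ degreewise and taking homotopy groups gives $E^{1}_{s,t}=\Tor^{H}_{t}((\tB^{j}N)^{\sma_{H}s},F)$, with $E^{1}$-differential the alternating sum of the induced face maps.

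For part (i), flatness of $B^{j}_{*}$ over $\pi_{*}F$ forces the universal coefficient spectral sequence of Theorem~\ref{thmuniv} to collapse, so $\bar B^{j}_{*}:=\QH[j]_{*-j}(A;F)\cong \pi_{*}(\tB^{j}N\lsmaH F)$; combining this with the identification $(\tB^{j}N)^{\sma_{H}s}\lsmaH F\simeq (\tB^{j}N\lsmaH F)^{\sma_{F}s}$ and iterating the Künneth spectral sequence of \cite[IV\S4]{ekmm} produces
\[
E^{1}_{s,t}\cong \bigl((\bar B^{j}_{*})^{\otimes_{\pi_{*}F} s}\bigr)_{t}.
\]
The $E_{1}$-multiplication on $\tB^{j}N\lsmaH F$ endows $\bar B^{j}_{*}$ with a non-unital associative $\pi_{*}F$-algebra structure, with respect to which $B^{j}_{*}=\pi_{*}F\oplus \bar B^{j}_{*}$ is the augmented associative algebra of the statement, and under the Künneth identification the alternating sum of face maps becomes its usual bar differential. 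The $s=0$ term $\pi_{*}F$ accounts for the summand $\pi_{*}M=\pi_{*}F$ in the convergence of Theorem~\ref{thmbarssE1}, and $E^{2}_{s,t}=\Tor^{B^{j}_{*}}_{s,t}(\pi_{*}F,\pi_{*}F)$ as claimed. Part (ii) is parallel: projectivity forces the Ext universal coefficient sequence to collapse, and a dual Künneth argument using function modules identifies the $E_{1}$-page with $\Hom_{\pi_{*}F}((\bar B^{j}_{*})^{\otimes s},\pi_{*}F)$ equipped with the cobar coboundary, yielding $E_{2}^{s,t}=\Ext^{s,t}_{B^{j}_{*}}(\pi_{*}F,\pi_{*}F)$.

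The main obstacle is verifying that the simplicial face maps used in the construction of $\tB^{j+1}N$ in \cite{bmthh} reduce, after applying $\pi_{*}(-\lsmaH F)$, to exactly the multiplication making $B^{j}_{*}$ the asserted associative algebra: one must check that the $E_{1}$-part of the $E_{n-j}$-structure on $\tB^{j}N$ is what governs the $E^{1}$-differential of our spectral sequence. Once this compatibility is in place, the remaining identifications are standard flatness/projectivity manipulations together with the classical (co)bar resolution for computing $\Tor$ and $\Ext$ over an augmented algebra.
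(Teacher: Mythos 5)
The paper does not supply a written proof of Theorem~\ref{thmbarssE2} (it is treated as standard; Sections~\ref{secdlss} and~\ref{secsubdiv} only prove Theorems~\ref{thmbardl} and~\ref{thmbarssmult}.(ii)), so there is no paper argument to match your proposal against line-by-line. That said, your strategy --- identify the (normalized) $E^{1}$-page of the simplicial filtration with the bar complex of $B^{j}_{*}$ via base change and a K\"unneth argument, then read off $\Tor$ and $\Ext$ at $E^{2}$ --- is exactly the argument one would expect, and the overall structure is sound.

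One point in your write-up is misstated and should be corrected. You assert that flatness of $B^{j}_{*}$ over $\pi_{*}F$ forces the universal coefficient spectral sequence of Theorem~\ref{thmuniv} to collapse, giving $\QH[j]_{*-j}(A;F)\cong \pi_{*}(\tB^{j}N\lsmaH F)$. This isomorphism does not come from any collapse: by the formula $\QH[j]_{*-j}(A;M)=\Tor^{H}_{*}(\tB^{j}N,M)$ used throughout Section~\ref{secmultss}, it is the definition of $\QH[j]_{*-j}(A;F)$ in terms of the iterated bar construction. Moreover, the spectral sequence in Theorem~\ref{thmuniv} has $E^{2}$-page $\Tor^{\pi_{*}H}_{*,*}(\QH_{*}(A;H),\pi_{*}M)$; flatness of $B^{j}_{*}$ over $\pi_{*}F$ bears on neither input, so the collapse you invoke would not follow from the stated hypothesis in any case. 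Where flatness is genuinely used is in the K\"unneth spectral sequence identification $\pi_{*}\bigl((\tB^{j}N\lsmaH F)^{\sma_{F}s}\bigr)\cong (\bar B^{j}_{*})^{\otimes_{\pi_{*}F}s}$ for $s\geq 2$ (this is the iterated K\"unneth step you cite), and for part (ii), projectivity is used both for this K\"unneth collapse and for the collapse of the $\Ext$-universal coefficient spectral sequence over $F$ after the base change $\Ext_{H}^{*}(X,F)\cong \Ext_{F}^{*}(X\lsmaH F,F)$. Finally, you correctly flag but do not discharge the verification that the face maps of $B\subdot(\tB^{j}N)$, after applying $\pi_{*}(-\lsmaH F)$ and the K\"unneth identification, induce the two-sided bar differential for the augmented associative algebra $B^{j}_{*}$; this is where the $E_{1}$-structure supplied by \cite{bmthh} for $j<n$ enters and where a complete proof would need to say a few words.
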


These spectral sequences also have multiplicative structures.
For the homological spectral sequence, when $n>1$ and $j<n-1$, the bar
construction $B\subdot(\tB^{j}N)$ is a 
simplicial (partial) non-unital $E_{n-j-1}$-algebra.  The structure map
\[
\LC[n-j-1](2)_{+}\sma (B(\tB^{j}N)\sma B(\tB^{j}N))\to B(\tB^{j}N)
\]
preserves the filtration and so induces an action on the spectral
sequence.  Since $\LC[n-j-1](2)\simeq S^{n-j-2}$, for $j=n-2$, we get
an associative multiplication, and for $j<n-2$ we get an associative
commutative multiplication and a $(n-j-2)$-shifted Lie bracket, which
together give the structure of an $(n-j-2)$-Poisson algebra.  However,
since each $B_{s}(\tB^{j}N)$ is actually a (partial) $E_{n-j}$-algebra, the Lie
bracket from the $E_{n-j-1}$-algebra structure is zero, and so the Lie
bracket in the spectral sequence is zero. The Lie bracket therefore
plays no role in spectral sequence computations here, though we do get
the conclusion that it lowers total filtration in $B^{j+1}_{*}$.  As
we show in Section~\ref{secsubdiv}, the cohomology version of the
spectral sequence always has an algebra structure.

\begin{thm}\label{thmbarssmult}
Let $A$ be an \EnR-algebra lying over $H$ and let $H\to F$ be a map of
commutative $S$-algebras.
\begin{enumerate}
\item 
If $n>1$ and $j<n-1$, the homological spectral sequence of
Theorem~\ref{thmbarssE1} with coefficients in $F$ has a natural multiplication satisfying the
Leibniz rule and converging to the multiplication on the target coming
from the $E_{n-(j+1)}$-algebra structure on the iterated bar
construction.  Under the flatness hypothesis of
Theorem~\ref{thmbarssE2}, the multiplication on the $E^{2}$ page
coincides with the usual multiplication on $\Tor^{B^{j}_{*}}_{*,*}(\pi_{*}F,\pi_{*}F)$
for the commutative algebra $B^{j}_{*}$.
\item 
For all $j<n$, the cohomological spectral sequence of
Theorem~\ref{thmbarssE1} with coefficients in $F$ has a natural multiplication satisfying the
Leibniz rule and converging to the multiplication on the target coming
from the diagonal on the bar construction.  Under the projectivity
hypothesis of Theorem~\ref{thmbarssE2}, the multiplication on the
$E_{2}$ page coincides with the usual (Yoneda) multiplication on
$\Ext^{*,*}_{B^{j}_{*}}(\pi_{*}F,\pi_{*}F)$. 
\end{enumerate}
\end{thm}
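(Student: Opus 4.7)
The plan is to bootstrap the multiplicative structures from pairings (or copairings) on the bar construction itself that are compatible with the bar filtration, and then to identify the resulting $E^{2}$/$E_{2}$ products with classical Tor/Ext products using the descriptions already provided in Theorem~\ref{thmbarssE2}.

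For part (i), since $\tB^{j}N$ is a cofibrant non-unital $E_{n-j}$-algebra by the main result of~\cite{bmthh}, its bar construction $B(\tB^{j}N)$ is a (partial) non-unital $E_{n-j-1}$-algebra (the loss of one level is exactly the delooping already used to iterate $\tB$), and hence the structure map
\[
\LC[n-j-1](2)_{+}\sma (B(\tB^{j}N)\sma_{H} B(\tB^{j}N))\to B(\tB^{j}N)
\]
arises as the geometric realization of a simplicial pairing and so preserves the bar filtration. Smashing with $M$ and passing to homotopy then yields a filtration-preserving pairing on the spectral sequence; the Leibniz rule is automatic, and convergence to the multiplication on $\pi_{*}(M\sma_{H}\tB^{j+1}N)$ coming from the $E_{n-j-1}$-structure on $\tB^{j+1}N$ follows from $K\tB^{j+1}N\simeq B\tB^{j}N$. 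Under the flatness hypothesis of Theorem~\ref{thmbarssE2}, the $E^{1}$ page is the classical two-sided bar complex for the associative $\pi_{*}F$-algebra $B^{j}_{*}$, and the pairing induced by combining tensor factors is, by inspection, the standard shuffle/chain-level product whose associated graded on $\Tor^{B^{j}_{*}}(\pi_{*}F,\pi_{*}F)$ is the usual Tor multiplication.

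For part (ii), a diagonal, rather than a multiplication, on the bar construction suffices, so the $E_{n-j-1}$-constraint is not needed. I would construct a natural filtration-preserving diagonal $B(\tB^{j}N)\to B(\tB^{j}N)\sma_{H}B(\tB^{j}N)$ by passing to an edgewise subdivision (as signalled by the $\sd$/$\Sd$ operators and by Section~\ref{secsubdiv}): on a subdivided simplicial object the diagonal of simplicial sets combines with the diagonal of $H$-modules to give a strict map whose simplicial filtration refines the bar filtration, and one then rectifies back to an unsubdivided model. Applying $\Ext_{H}^{*}(-,M)$ turns this into a cup product on the cohomological spectral sequence; the Leibniz rule follows because the copairing preserves filtration. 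Under projectivity, the $E_{1}$ page is the cobar complex for the coalgebra $B^{j}_{*}$ and the cup product induced by the diagonal of $B(\tB^{j}N)$ is, on $E_{2}$, exactly the Yoneda product on $\Ext^{*,*}_{B^{j}_{*}}(\pi_{*}F,\pi_{*}F)$.

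The main obstacle is the construction of the diagonal in part (ii): in the EKMM setting the bar construction does not carry a strict, filtration-preserving diagonal, so the subdivision machinery of Section~\ref{secsubdiv} is essential to produce a point-set model and to verify that the subdivided filtration interacts correctly with the bar filtration; once the diagonal is in place, the Leibniz rule and the Yoneda-product identification are standard. In part (i), the corresponding subtlety is that the $E_{n-j-1}$-operad acts only through a partial operadic structure on the simplicial bar construction, but the pairing at the $E^{1}$ level and its identification with the Tor multiplication are routine from the simplicial bar complex once the filtration compatibility is verified.
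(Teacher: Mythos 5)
Your proposal matches the paper's approach: part (i) uses the simplicial $E_{n-j-1}$-algebra structure on $B(\tB^{j}N)$ whose structure map preserves the bar filtration, and part (ii) uses edgewise subdivision to produce a filtration-preserving bar diagonal, exactly as carried out in Section~\ref{secsubdiv}. The one imprecision is in your description of the diagonal: the subdivision isomorphism $BC\iso|\sd B\subdot C|$ is itself a cellular, filtration-preserving point-set isomorphism via the prismatic decomposition (no separate ``rectification'' step is needed), and the bisimplicial map $\Sd B\subdot C\to B\subdot C\sma_{H}B\subdot C$ is an Alexander--Whitney-type map induced by the augmentation on the middle smash factor, not literally a ``diagonal of $H$-modules.''
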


We need one more set of results from \cite{bmthh}.  Section~8 
of \cite{bmthh} studies 
the compatibility of power operations on the algebra
and on the bar construction.  We summarize what we need in the
following theorem. 
For this, we write $H_{*}(A;\bF_{p})$ for $\pi_{*}(A\lsmaR
H\bF_{p})=\Tor_{*}^{R}(A,H\bF_{p})$, the ordinary $R$-module homology
of $A$ with coefficients in $\bF_{p}$, and write $\tilde
H_{*}(A;\bF_{p})$ for the kernel of the map $H_{*}(A;\bF_{p})\to
\bF_{p}$ induced by $A\to H\to H\bF_{p}$.

\begin{thm}\label{thmolddl}
Let $F=H\bF_{p}$, and suppose $n\geq
2$ and $1<j<n$.  With notation as in Theorem~\ref{thmbarssE2}, the
map $\tilde H_{*}(A;\bF_{p})\to B^{j}_{*+j}$ preserves the Dyer-Lashof
operations defined for $E_{n-j}$ $H\bF_{p}$-algebras.
\end{thm}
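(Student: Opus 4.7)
The plan is to proceed by induction on $j$, reducing at each step to the one-step bar-construction compatibility established in Section~8 of \cite{bmthh}. The map $\tilde H_{*}(A;\bF_{p}) \to B^{j}_{*+j}$ factors naturally as the composite of edge homomorphisms
\[
\tilde H_{*}(A;\bF_{p}) = B^{0}_{*} \to B^{1}_{*+1} \to \cdots \to B^{j}_{*+j},
\]
where each arrow $B^{k-1}_{*+k-1} \to B^{k}_{*+k}$ comes from the suspension-type edge map in the bar spectral sequence for $\tB^{k}N = \tB(\tB^{k-1}N)$ with coefficients in $F = H\bF_{p}$; concretely, it is the map induced on $\pi_{*+k-1}$ of the bottom filtration piece of the bar construction of the non-unital $E_{n-k+1}$-algebra $\tB^{k-1}N$, smashed over $H$ with $F$.

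For the inductive step, I assume that the map $\tilde H_{*}(A;\bF_{p}) \to B^{k-1}_{*+k-1}$ preserves Dyer-Lashof operations for $E_{n-k+1}$ $H\bF_{p}$-algebras, hence a fortiori for $E_{n-k}$ algebras via the operad map $\LC[n-k]\to \LC[n-k+1]$. Composing with the one-step map $B^{k-1}_{*+k-1} \to B^{k}_{*+k}$ will then preserve $E_{n-k}$ operations provided the second map does. This reduces the theorem to the following one-step statement: for a cofibrant non-unital $E_{m}$ $H\bF_{p}$-algebra $N'$ with $m \geq 2$, the suspension map $\pi_{*}(N') \to \pi_{*+1}(\tB N')$ intertwines the Dyer-Lashof operations on its target (as an $E_{m-1}$-algebra) with those on the source (coming from its $E_{m}$-structure restricted to the $E_{m-1}$ suboperad).

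This one-step statement is precisely the compatibility established in Section~8 of \cite{bmthh}, which analyzes the interaction of power operations on an $E_{m}$-algebra with those on its bar construction viewed as an $E_{m-1}$-algebra. The main obstacle in the argument is the identification step: verifying that the edge map $B^{k-1}_{*+k-1} \to B^{k}_{*+k}$ appearing in the iterated bar spectral sequence with coefficients in $F$ really does agree with the suspension-to-bar map for the non-unital $E_{n-k+1}$-algebra $\tB^{k-1}N$ (after coefficient change to $F$), so that the Section~8 machinery applies verbatim at every stage of the induction. The hypothesis $1 < j < n$, together with $n \geq 2$, guarantees that the operadic degree $m = n-k+1$ remains at least $2$ throughout the induction, so the Dyer-Lashof operations are defined on both sides of each one-step comparison.
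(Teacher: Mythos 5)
The paper does not prove this theorem at all. The text immediately preceding it says ``We need one more set of results from \cite{bmthh}. Section~8 of \cite{bmthh} studies the compatibility of power operations on the algebra and on the bar construction. We summarize what we need in the following theorem.'' So Theorem~\ref{thmolddl} is stated as a direct summary of results from the companion paper, and there is no in-paper proof for your argument to be compared against.

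That said, your sketch is a plausible reconstruction of how one would deduce the iterated statement from a one-step compatibility in \cite{bmthh}~\S8, and the skeleton (factoring the natural map through the tower of suspension/edge maps $B^{k-1}_{*+k-1}\to B^{k}_{*+k}$ and then invoking the $\LC[n-k]\to\LC[n-k+1]$ operad inclusion to pass DL-operation preservation down a level) is sound. Two things to tighten. First, your induction implicitly requires the first step $B^{0}_{*}\to B^{1}_{*+1}$ to already preserve DL operations (as the base case), even though the theorem as stated only asserts the conclusion for $j>1$; you should either check that the $j=1$ step holds (which it should, by the same one-step argument) or explain why the composite through $B^{1}$ still inherits the claimed compatibility. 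Second, you correctly identify the real work as the ``identification step'' — verifying that the edge homomorphism in the iterated bar spectral sequence with coefficients changed to $F=H\bF_p$ is the same map to which \cite{bmthh}~\S8 applies — but you leave it unverified; this is precisely the content that \cite{bmthh} supplies and that the present paper is choosing not to reprove, so a self-contained argument would need to spell it out. As a gloss on a citation your outline is fine; as a replacement for the cited result it is a plan rather than a proof.
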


We can generalize the theorem above by studying the interaction of the 
Dyer-Lashof operations with the spectral sequence in
Theorem~\ref{thmbarssE1}.  For $n>2$ and $0\leq j<n-2$, each line in the
bar construction $B(\tB^{j}N)$ is a (partial) $E_{n-j}$-algebra, and so its
$\bF_{p}$ homology admits additive operations $Q^{i}$ and
(for $p>2$) $\beta Q^{i}$ on elements in degree $t$ for $2i<t+n-j-1$
for $p>2$ or $i<t+n-j-1$ for $p=2$.
The operation $\beta^{\epsilon}Q^{i}$ is zero if $2i-\epsilon<t$ for
$p>2$ or $i<t$ for $p=2$ 
\cite[III.3.1]{hinfty}.  These operations therefore act on the
$E^{1}$ page of the homological spectral 
sequence of Theorem~\ref{thmbarssE1} (for $M=H\bF_{p}$); they
commute up to sign with the $d_{1}$ differential since this is an
alternating sum of face 
maps, each of which is a map of (partial) $E_{n-j-1}$-algebras.  (The top
operations commute with the face maps because they are the
restriction operation $\xi_{n-j-1}$ and for $p>2$ the operation
$\zeta_{n-j-1}$ in the $E_{n-j-1}$-algebra
homology operations \cite[III.3.3.(2)]{hinfty}.)
We prove the  following theorem in Section~\ref{secdlss}.  

\begin{thm}\label{thmbardl}
Suppose $n>2$ and $0\leq j<n-2$. For $F=H\bF_{p}$, the spectral sequence
of Theorem~\ref{secmultss} admits operations
\begin{align*}
&\beta^{\epsilon}Q^{i}\colon E^{r}_{s,t}\to E^{r}_{s,t+2i(p-1)-\epsilon}&\epsilon =0,1\qquad&p>2\\
&Q^{i}\colon E^{r}_{s,t}\to E^{r}_{s,t+i}&&p=2
\intertext{
for $r\geq 1$ and 
}
&2i<t+n-j-1 &&p>2\\
&i<t+n-j-1 &&p=2,
\end{align*}
satisfying the following properties:
\begin{enumerate}
\item For $r=1$, $\beta^{\epsilon}Q^{i}$ is the Dyer-Lashof operation
$\beta^{\epsilon}Q^{i}$ 
on $H_{*}(B_{s}(\tB^{j}N);\bF_{p})$.
\item $\beta^{\epsilon} Q^{i}$ (anti)commutes with the differential
$d_{r}\colon E^{r}_{s,t}\to 
E^{r}_{s-r,t+r-1}$ by the formula
$\beta^{\epsilon}Q^{i}d_{r}=(-1)^{\epsilon}d_{r}\beta^{\epsilon}Q^{i}$. 
\item If $x\in E^{1}_{s,t}$ is a permanent cycle and 
$\beta^{\epsilon}Q^{i}$ is defined on $E^{1}_{s,t}$, then 
the permanent cycle $\beta^{\epsilon}Q^{i}x$ represents
$\beta^{\epsilon}Q^{i}x$ in $B^{j+1}_{*}$.  If $s+t>2i-\epsilon$ for $p>2$
or $s+t>i$ for $p=2$, then $\beta^{\epsilon}
Q^{i}x$ is hit by a differential.
\end{enumerate}
\end{thm}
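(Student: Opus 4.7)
The plan is to install the Dyer-Lashof operations on the spectral sequence of Theorem~\ref{thmbarssE1} (with $F=H\bF_{p}$) by making the skeletal filtration on the bar construction compatible with an extended power construction, and then to deduce the differential (anti)commutation and the convergence statement by a Kudo-style transgression argument. Write $Y=\tB^{j}N$, so the spectral sequence is the one associated to the simplicial filtration of $B\subdot Y$, viewed as a simplicial (partial) non-unital $E_{n-j}$ $H$-algebra.

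Property (i), the identification on $E^{1}$, is essentially built into the setup: each $B_{s}Y$ is a partial $E_{n-j}$-algebra, so $H_{*}(B_{s}Y;\bF_{p})$ carries the claimed operations $\beta^{\epsilon}Q^{i}$ in the range $2i<t+n-j-1$ (or $i<t+n-j-1$ for $p=2$), and the remark preceding the theorem shows that these operations commute with the alternating sum $d_{1}$ of face maps up to sign, both for operations inherited from the partial $E_{n-j-1}$-structure on faces and for the top operations via their identification with $\xi_{n-j-1}$ and $\zeta_{n-j-1}$ \cite[III.3.3.(2)]{hinfty}.

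For property (ii) I would work with the extended power $D_{p}X=\LC[n-j](p)_{+}\sma_{\Sigma_{p}}X^{(p)}$ applied to $X=B\subdot Y$, equipped with the filtration induced by the total simplicial degree on $X^{(p)}$; because the $E_{n-j}$ action on $B\subdot Y$ preserves simplicial degree, the structure map $D_{p}(B\subdot Y)\to B\subdot Y$ is filtration-preserving, producing a map from the extended power spectral sequence to the one of Theorem~\ref{thmbarssE1}. Extracting the Dyer-Lashof cells from this auxiliary spectral sequence in the style of Bruner's formalism for $H_{\infty}$ ring spectra yields additive operations $\beta^{\epsilon}Q^{i}$ on each $E^{r}$ page in the stated range, and a direct chain-level count of how the cells meet the filtration yields the (anti)commutation relation $\beta^{\epsilon}Q^{i}d_{r}=(-1)^{\epsilon}d_{r}\beta^{\epsilon}Q^{i}$.

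For property (iii), naturality of the extended power construction gives that $\beta^{\epsilon}Q^{i}$ of a permanent cycle $x$ is again a permanent cycle, representing the Dyer-Lashof image of the class detected by $x$ in $B^{j+1}_{*}$; this establishes the first assertion. The vanishing statement follows from the instability relation $\beta^{\epsilon}Q^{i}y=0$ in $B^{j+1}_{*}$ when $|y|>2i-\epsilon$ for $p>2$ (or $|y|>i$ for $p=2$): any permanent cycle $\beta^{\epsilon}Q^{i}x$ with $s+t$ in the forbidden range must represent zero in the abutment, and so must be hit by some $d_{r}$. The main obstacle will be the filtration-preserving extended power construction and the associated sign bookkeeping in the transgression argument; making this rigorous requires a cellular model of $\LC[n-j](p)_{+}$ (or a suitable operadic subdivision, as in the following section) for which the induced filtration on $D_{p}(B\subdot Y)$ both maps correctly into the skeletal filtration of $B\subdot Y$ and produces the homological Dyer-Lashof classes in the expected bidegrees.
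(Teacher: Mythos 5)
Your general plan---extended powers of the simplicial bar construction, filtration-preserving structure maps, and chain-level Dyer-Lashof bookkeeping---points in the right direction, and your sketch of (i) and of the vanishing in (iii) is fine in outline. However, the ``main obstacle'' you defer at the end is the entire substance of the theorem, and the proposal does not supply the device that makes it tractable. Filtering $\LC[n-j](p)_{+}\sma_{\Sigma_{p}}(B\subdot Y)^{(p)}$ by total simplicial degree does give a map of spectral sequences, but $B\subdot Y$ carries no canonical CW structure as a simplicial $H\bF_{p}$-module, so the ``direct chain-level count'' you invoke cannot actually be performed there; the Bruner-style comparison controls $E^{1}$ and $d_{1}$ but does not by itself track the higher differentials.

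The step you are missing is the replacement of the class $x\in E^{r}_{s,t}$ by a map $X\subdot\to A\subdot$ from a small auxiliary simplicial $H\bF_{p}$-module built from one sphere $S_{c}^{t}$ in simplicial degree $s$, cones $CS_{c}^{t+\ell}$ in the intervening degrees, and (if $x$ supports a $d_{r}$) a sphere $S_{c}^{t+r-1}$ in degree $s-r$. Producing such a representative requires choosing a point-set map on which the faces $\del_{1},\dotsc,\del_{s}$ vanish and then successively coning off $\del_{0}$; this is exactly what Lemmas~\ref{lemreedy} and~\ref{lemreedyrel} supply after passing to a Reedy fibrant replacement of $A\subdot$. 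With $X\subdot$ in hand, the operad structure map $\LC(p)_{+}\sma_{\Sigma_{p}}X\subdot^{(p)}\to A\subdot$ is filtration-preserving and the source---together with $S^{\infty}_{+}\sma_{C_{p}}X\subdot^{(p)}$, reached via $\LC[\infty](p)_{+}\sma_{\Sigma_{p}}(-)$---is canonically a simplicial CW $H\bF_{p}$-module, so the chain-level operations of \cite{gensteen} apply on normalized total cellular chains; only there can the sign in (ii) and the identifications in (iii) be read off. Note also that the first assertion of (iii) is not merely naturality of extended powers: one needs the collapse map from $X\subdot$ to the constant simplicial object on $S_{c}^{s+t}$ to recognize the permanent cycle $\beta^{\epsilon}Q^{i}x$ as a Dyer-Lashof operation applied to a fundamental sphere class, from which the vanishing in the stated range follows.
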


Finally, we have the following Hurewicz Theorem relating Quillen
homology with $R$-module homology.  For an $H$-module $M$, we write
$H_{*}(A;M)$ for $\pi_{*}(A\lsmaR M)=\Tor^{R}_{*}(A,M)$, the
$R$-module homology with coefficients in $M$, and likewise,
$H_{*}(X,A;M)$ for the relative homology. Thinking in terms of
non-unital \EnH-algebras, the unit of the $Q,Z$ adjunction $N\to ZQ N$
induces a natural map from (reduced) homology to Quillen homology.

\begin{thm}[Hurewicz Theorem]\label{thmhurewicz}
Let $M$ be a connective $H$-module and let
$A\to X$ be a map in $\ARH$ that is a $q$-equivalence for $q>0$. Then
the natural map from $H_{*}(X,A;M)$ to 
$\QH_{*}(X,A;M)$ is an isomorphism for $*\leq q+1$, i.e.,
$\QH_{*}(X,A;M)=0$ for $*\leq q$ and $H_{q+1}(X,A;M)\iso \QH_{q+1}(X,A;M)$.
\end{thm}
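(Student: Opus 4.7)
The strategy is to reduce the statement to a Hurewicz theorem for non-unital $E_n$ $H$-algebras and to prove that by analyzing the indecomposables functor through its defining coequalizer.

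First, using the Quillen equivalence $(K,I)$ of Theorem~\ref{thmaugqe} and the constructions reviewed in Section~\ref{seco1}, I would reduce to the setting of $\NH$. Replace $A\to X$ by a cofibration $A'\to X'$ of cofibrant objects in $\ARH$; let $N_A=\rI(H\lsmaR A')$, $N_X=\rI(H\lsmaR X')$, and let $\tilde N$ be the cofiber of $N_A\to N_X$ taken in $\NH$. Then $\tilde N\simeq \rI(Cf)$, so $\QH_*(X,A;M)=\pi_*(\lQ\tilde N \lsmaH M)$ and $H_*(X,A;M)$ identifies with the reduced $H$-module homology $\pi_*(\tilde N \lsmaH M)$ of the cofiber; the Hurewicz map is the map on $\pi_*(-\lsmaH M)$ induced by the underlying $H$-module projection $\tilde N\to \lQ\tilde N$ coming from the unit of the $(Q,Z)$ adjunction. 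Since $A\to X$ is a $q$-equivalence, take $N_A\to N_X$ cellular with cells $\oC(\Sigma^{k}H)\to \oC(C\Sigma^{k+1}H)$ attached for $k\geq q+1$; then $\tilde N$ has a cell structure built from free non-unital $E_n$-algebra cells on $H$-module cells in degrees $\geq q+1$, hence is $q$-connected.

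The main step is to show that $\tilde N\to \lQ\tilde N$ is a $(2q+2)$-equivalence of $H$-modules. Use the defining coequalizer
\[
\myop{\bigvee}_{m>0}\oC(m)_{+}\sma_{\Sigma_{m}}\tilde N^{(m)}\rightrightarrows \tilde N\to \lQ\tilde N
\]
to present $\lQ\tilde N$ as a homotopy cofiber. The $m=1$ contribution is null-homotopic because $\oC(1)\simeq *$ forces the two parallel arrows (the $\oC(1)$-action and the identity through $\oC(1)\to *$) to agree up to homotopy, so only the summands with $m\geq 2$ contribute effectively to the fiber. Each summand $\oC(m)_{+}\sma_{\Sigma_{m}}\tilde N^{(m)}$ with $m\geq 2$ is at least $(2q+1)$-connected, since $\tilde N^{(m)}$ consists of at least two smash factors of the $q$-connected $H$-module $\tilde N$ over connective $H$, and $\oC(m)$ is connective with free $\Sigma_m$-action (so smashing with $\oC(m)_+$ and passing to $\Sigma_m$-orbits does not lower connectivity). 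The fiber of $\tilde N\to \lQ\tilde N$ is thus $(2q+1)$-connected.

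Smashing this $(2q+1)$-connected fiber with the connective $H$-module $M$ over the connective commutative $S$-algebra $H$ yields, via the universal coefficient spectral sequence of Theorem~\ref{thmuniv}, vanishing homotopy in degrees $\leq 2q+1$. The long exact sequence after $(-)\lsmaH M$ then gives $H_*(X,A;M)\iso \QH_*(X,A;M)$ in degrees $\leq 2q+1$, and hence for $*\leq q+1$ (since $q\geq 1$); the vanishing $\QH_*(X,A;M)=0$ for $*\leq q$ is automatic from the $q$-connectivity of $\lQ\tilde N$ combined with the connectivity of $M$ and $H$. The main obstacle is the sharp connectivity estimate in the previous paragraph: one must carefully convert the strict coequalizer into a homotopy cofiber sequence with sharp bounds, correctly accounting for the $m=1$ piece where $\oC(1)$ is contractible but not a point, and invoking the smash connectivity formula for $q$-connected modules over a connective ring spectrum.
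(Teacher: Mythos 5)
Your approach is genuinely different from the paper's, but it has a gap that is more serious than the one you flag. The paper reduces to $M=H$, models the map as $N\to Y$ in $\NH$, and then applies the \emph{iterated bar construction}: it shows by induction on $j$ (using the bar filtration, whose level-$s$ quotient is $\Sigma^{s}N^{(s)}$) that $\tB^{j}N\to\tB^{j}Y$ is a $(q+j)$-equivalence with $\pi_{q+j+1}$ of its cofiber equal to $H_{q+1}(X,A;H)$, and then invokes $\tB^{n}N\simeq\Sigma^{n}QN$. That argument never needs to say anything about the strict coequalizer defining $Q$, so it sidesteps the issue you acknowledge as ``the main obstacle.''

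The real problem, however, is in the first paragraph. You define $\tilde N$ as the cofiber of $N_{A}\to N_{X}$ \emph{in} $\NH$, correctly note that $\tilde N\simeq \rI(Cf)$, and then assert that $H_{*}(X,A;M)$ ``identifies with the reduced $H$-module homology $\pi_{*}(\tilde N\lsmaH M)$ of the cofiber.'' This conflates two different cofibers. The quantity $H_{*}(X,A;M)$ is the relative $R$-module homology, i.e., $\pi_{*}(C\lsmaH M)$ where $C$ is the \emph{$H$-module} cofiber of $N_{A}\to N_{X}$. The $\NH$-cofiber $\tilde N=N_{X}\amalg_{N_{A}}*$ (an algebra pushout) is not the same $H$-module as $C$: for instance, if $N_{A}=\NC V$ and $N_{X}=\NC(V\vee W)$, then $\tilde N\simeq \NC W$ while $C=\NC(V\vee W)/\NC V$ contains $\oC(m)_{+}\sma_{\Sigma_{m}}(\cdots)$ summands mixing $V$- and $W$-factors that are absent from $\NC W$. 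The Hurewicz comparison map is the composite $C\to\tilde N\to\lQ\tilde N$ (the first map exists because $N_{A}\to\tilde N$ is trivial), and your argument only analyzes the second leg $\tilde N\to\lQ\tilde N$. Even if the coequalizer estimate were tightened into a rigorous proof, you would still need to control the connectivity of $C\to\tilde N$, and nothing in the proposal addresses that. Note also that as written your argument would appear to yield an isomorphism through degree $2q+1$, which is stronger than the theorem and does fail in general; the spurious extra range is exactly a symptom of the $C$-versus-$\tilde N$ conflation, because the discrepancy between them first appears above degree $q+1$.

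On the secondary point, you are right that the defining coequalizer is a strict colimit and that making your connectivity estimate precise requires work (the student-level statement ``the $m=1$ contribution is null-homotopic'' is not by itself a homotopy-cofiber argument). Since $Q$ is left Quillen and $\tilde N$ is cofibrant, $Q\tilde N$ does compute $\lQ\tilde N$, and a cell-by-cell induction over a cell structure for $\tilde N$ built from $\NH$-cells in degrees $\geq q+1$ would give the desired estimate on $\tilde N\to Q\tilde N$; but that induction is exactly what the bar construction packages, so in practice one ends up reproving the paper's argument. If you repair the identification in the first paragraph by working with $C$ (and then separately bounding the connectivity of $C\to\tilde N$ by a second cell induction), the resulting argument becomes correct but essentially parallel to, and no shorter than, the paper's.
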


\begin{proof}
Using the homology version of the Universal Coefficient Theorem, it suffices to
consider the case when $M=H$.  We model the map $H\lsmaR A\to H\lsmaR
X$ as a map of non-unital \EnH-algebras $N\to Y$.  Looking at the
filtration on the bar constructions $\tB N$ and $\tB Y$, we see that
on the $s$-th filtration level, the map $N^{(s)}\to Y^{(s)}$ is
$sq$-connected, and for $s=1$, the cofiber has $(q+1)$-st homotopy
group $H_{q+1}(X,A;H)$. (The zeroth filtration level is trivial.)
Since $q>0$, for $s>1$, $sq\geq q+1$, and so it follows that the map
$\tB N$ to $\tB Y$ is a $(q+1)$-equivalence whose cofiber has
$\pi_{q+2}$ given by $H_{q+1}(X,A;H)$ (via the inclusion of the
cofiber of $\Sigma N\to \Sigma Y$ from filtration level $1$).
Repeating this argument, we get that the map $\tB^{n} N$ to $\tB^{n}
Y$ is a $(q+n)$-equivalence whose cofiber has $\pi_{q+n+1}$ given by
$H_{q+1}(X,A;H)$ (via the inclusion of the cofiber of $\Sigma
\tB^{n-1}N\to \Sigma \tB^{n-1}Y$ from filtration level $1$).
\end{proof}

\section{Obstruction Theory for Connective $E_{n}$ Algebras}\label{seco2}

This section generalizes the work of Kriz and Basterra~\cite{mbthesis}
on Postnikov towers and obstruction theory on $E_{\infty}$ ring
spectra to the context of $E_{n}$ ring spectra.  We continue to work in the
context of $E_{n}$ $R$-algebras for a connective commutative
$S$-algebra $R$.  Now we take $H=HZ$ for $Z$ some commutative
$\pi_{0}R$-algebra, typically $Z=\pi_{0}A$ for the $E_{n}$ $R$-algebra
we are interested in.

We begin with the notion of Postnikov tower.  For a connective
cofibrant $E_{n}$ $R$-algebra $A$ fix $H=H\pi_{0}A$, and note that $A$
has an essentially unique structure of an $E_{n}$ $R$-algebra over
$H$: An easy induction in terms of cells for a cell $E_{n}$
$R$-algebra homotopy equivalent to $A$ shows that $A$ admits a map of
$E_{n}$ $R$-algebras to $H$ which induces the identity map on
$\pi_{0}$, and that this map is unique up to homotopy.  Fixing a
choice of augmentation, we can then consider towers in $\ARH$ under $A$,
\[
A \longrightarrow \cdots \to \ps A{q} \to \ps{A}{q -1} \to \cdots \to \ps{A}{0},
\]
where the map $\ps{A}{0}\to H$ is a weak equivalence,
$\pi_{s}\ps{A}{q}=0$ for $s>q$, and the map $A\to \ps{A}{q}$ induces an
isomorphism on homotopy groups $\pi_{s}$ for $s\leq q$.  We call any
such tower a \term{Postnikov tower for $A$ in $\ARH$},
and we say that two Postnikov towers for $A$ in $\ARH$ are equivalent
if they are weakly 
equivalent as towers in $\ARH\bs A$, i.e., in the category of $E_{n}$
$R$-algebras lying over $H$ and under $A$.
Of course, the underlying tower of $R$-modules of a Postnikov tower in
$\ARH$ is a Postnikov tower of $R$-modules.  We have the following
existence and uniqueness theorem for Postnikov towers in $\ARH$.

\begin{thm}
Let $A$ be a connective cofibrant $E_{n}$ $R$-algebra over
$H=H\pi_{0}A$.  Then $A$ has a Postnikov tower in $\ARH$ and any two
are weakly equivalent.
\end{thm}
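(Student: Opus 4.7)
The plan is to proceed by induction on $q$, constructing Postnikov sections by cell attachments in $\ARH$ and then proving uniqueness by an inductive extension argument comparing any two towers level by level. The technical heart of both parts is the following extension principle, which I would prove first: if $A\to A'$ is a cofibration in $\ARH$ that is a $q$-equivalence, and $Y\in\ARH$ has $\pi_s Y=0$ for $s>q$, then every map $A\to Y$ extends to $A'\to Y$, uniquely up to homotopy over $A$. This follows by writing $A\to A'$ as a relative cell complex of free $E_n$ $R$-algebra cells in dimensions $\geq q+2$ and observing that the obstructions to both extensions and homotopies land in $\pi_{\geq q+1}Y=0$.

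For existence, set $\ps A 0 = H$. Given $A\to\ps A{q-1}$ satisfying the inductive hypothesis, I would construct $\ps A q$ by the small-object argument in $\ARH\bs A$: attach free $E_n$ $R$-algebra cells to $A$ along generators of $\pi_k A$ for every $k>q$ to obtain a cofibrant object $\ps A q$ with the prescribed homotopy groups. The map $A\to\ps A q$ is then a $q$-equivalence whose relative cells all sit in dimensions $\geq q+2$, and the extension principle applied with $Y=\ps A{q-1}$ produces the required structure map $\ps A q\to\ps A{q-1}$.

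For uniqueness, given two Postnikov towers $\{\ps A q\}$ and $\{\ps B q\}$ for $A$, I would inductively build a map of towers under $A$ in the homotopy category of $\ARH\bs A$. At level $0$ both are equivalent to $H$ under $A$. For the inductive step, apply the extension principle with the cofibration $A\to\ps A q$ (after cofibrant replacement) and target $\ps B q$ to get $\ps A q\to\ps B q$ under $A$. Compatibility with the previously-constructed map $\ps A{q-1}\to\ps B{q-1}$ holds up to homotopy over $A$: the two composites $\ps A q\to\ps B q\to\ps B{q-1}$ and $\ps A q\to\ps A{q-1}\to\ps B{q-1}$ are both extensions over $\ps A q$ of the same underlying map $A\to\ps B{q-1}$, and since $\pi_{>q}\ps B{q-1}=0$ the uniqueness clause of the extension principle identifies them. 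The constructed map is a weak equivalence at each level because it induces an isomorphism on $\pi_s$ for $s\leq q$ (by compatibility with $A$) and both sides vanish in higher degrees.

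I expect the main obstacle to be proving the uniqueness-up-to-homotopy portion of the extension principle, since the cell-by-cell argument used for existence must be replaced by a cylinder-object argument in the model structure on $\ARH$. I would handle this conceptually by packaging the obstructions as classes in the relative Quillen cohomology $\QC^*(A',A;\pi_*Y)$: by Theorem~\ref{thmhurewicz} the pair $(A',A)$ has trivial relative Quillen homology in degrees $\leq q$, and the universal coefficient spectral sequence of Theorem~\ref{thmuniv} then transfers this to the vanishing of $\QC^*(A',A;\pi_*Y)$ in the range responsible for the obstructions, giving both the existence and homotopy-uniqueness conclusions uniformly.
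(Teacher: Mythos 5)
Your existence construction matches the paper's: build $\ps Aq$ by attaching $E_n$ $R$-algebra cells to kill higher homotopy. That part is fine.

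The uniqueness argument has a real gap. The paper defines two Postnikov towers for $A$ to be equivalent if they are weakly equivalent \emph{as towers} in $\ARH\bs A$, i.e., by a zigzag of strictly commuting levelwise weak equivalences. Your inductive construction produces maps $\ps Aq \to \ps Bq$ under $A$ that commute with the tower maps only \emph{up to homotopy over $A$}; you have not produced a map of towers. Getting from homotopy commutativity at each level to a genuine zigzag of tower maps is precisely the step you cannot skip, and it is why the paper chooses the particular strategy it does: first replace one tower by an equivalent one whose tower maps $\ps Bq \to \ps B{q-1}$ are fibrations, and then do a cell-by-cell argument comparing a relative-cell-complex tower to a fibration tower. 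Because $\ps Bq \to \ps B{q-1}$ is a fibration, a map built on $\ps A{q-1}\to \ps B{q-1}$ can be \emph{lifted} (not just patched up to homotopy) through the fibration as one attaches cells from $A$ to $\ps Aq$, so the resulting tower map commutes strictly. Your approach would need an additional rectification step to turn the levelwise-compatible-up-to-homotopy maps into an actual map of towers, and you have not supplied it.

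The final paragraph is also off target. The obstruction to extending a map $A\to Y$ across a free $E_n$ cell attachment lives in an ordinary homotopy group of $Y$ (or of a relevant fiber), and the vanishing you need comes simply from the connectivity of the cells together with $\pi_{>q}Y=0$; this is a HELP-type argument, not Quillen cohomology. The groups $\QC^*(-,-;M)$ in Theorems~\ref{thmhurewicz} and~\ref{thmuniv} are, by construction, homotopy classes of maps into square-zero extensions $H\vee M$. They do not classify maps into an arbitrary $Y\in\ARH$ with bounded homotopy, so packaging the extension obstruction as a class in $\QC^*(A',A;\pi_*Y)$ does not make sense here. The Quillen-cohomological obstruction theory enters the paper only later, in Theorem~\ref{thmtower}, after the Postnikov tower is in hand and one works against the $k$-invariant maps $\ps A{q-1}\to H\vee\Sigma^{q+1}H\pi_qA$.

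Finally, a small but nontrivial point in your stated extension principle: not every cofibration in $\ARH$ that is a $q$-equivalence is a relative cell complex with cells only in dimensions $\geq q+2$. You would need to replace the given cofibration by a weakly equivalent one with that cell structure, which is an additional (standard, but not free) step; stating the principle for arbitrary $q$-connected cofibrations and then ``writing it as'' such a relative cell complex glosses over this.
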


\begin{proof}
We can construct $\ps{A}{q}$ by attaching $E_{n}$ $R$-algebra cells to
kill off the higher homotopy groups; this constructs a Postnikov tower
where each structure map $A\to \ps{A}{q}$ is a relative cell complex.
It is easy to see that any Postnikov tower is weakly equivalent to one
where the tower maps $\ps{A}{q}\to \ps{A}{q-1}$ are fibrations, and
then an easy cell by cell argument constructs a weak equivalence of
Postnikov towers
from any one where the structure maps are relative cell complexes to
any one where the tower maps are fibrations.
\end{proof}

The $R$-module Postnikov tower is modeled by a tower
of principal fibrations and this leads to an obstruction theory in the
category of $R$-modules.  Kriz's insight is that the same holds for
$E_{\infty}$ ring spectra, and we argue that the same holds for
$E_{n}$ $R$-algebras.  Since $H$ is the final object in $\ARH$ a
principal fibration in $\ARH$ is one that is the pullback of a
fibration whose source is weakly equivalent to $H$.  To avoid
using cofibrant and fibrant approximation in the statements below, 
for the purposes of this section we will understand 
``homotopy fiber'' in terms of the notion of weak pullback in
$\h\ARH$:  We say 
that $\ps{A}{q}$ (or more accurately, the map $\ps{A}{q}\to
\ps{A}{q-1}$) is the \term{homotopy fiber} of a map
$\ps{A}{q-1}\to H\vee M$ in $\h\ARH$ when the square
\[
\xymatrix@C-1pc{%
\ps{A}{q}\ar[d]\ar[r]&H\ar[d]\\
\ps{A}{q-1}\ar[r]&H\vee M
}
\]
in $\h\ARH$ is a weak pullback square, i.e., given any map from $B$ to
$\ps{A}{q-1}$ in $\h\ARH$ such that the composite to $H\vee M$ factors
through (the unique map to) $H$, there exists a compatible map from
$B$ to $\ps{A}{q}$ in $\h\ARH$.  The map from $B$ to $\ps{A}{q}$ will
generally not be unique: When the homotopy fiber is constructed as the
pullback along a fibration for a choice of point-set model of
the map $\ps{A}{q-1}\to H\vee M$ and a fibration model of the map
$H\to H\vee M$, it is easy to see that the set of choices for
the map $B\to \ps{A}{q}$ (when one exists) has a free transitive
action of $\QC^{-1}(B;M)\iso \QC^{0}(B;\Omega M)$.  The following
theorem in part asserts that a Postnikov tower in $\ARH$ can be
constructed by iterated homotopy fibers of $E_{n}$ 
$R$-algebra $k$-invariants. 

\begin{thm}\label{thmtower}
Let $A$ be a connective cofibrant $E_{n}$ $R$-algebra over
$H=H\pi_{0}A$. Then there exists a Postnikov tower for $A$
in $\ARH$,
\[
A \longrightarrow \cdots \to \ps A{q} \to \ps{A}{q -1} \to \cdots \to \ps{A}{0},
\]
and a sequence of classes $k^{n}_{q} \in
\QC^{q+1}(\ps{A}{q-1};\pi_{q}A)$ such that $\ps{A}{0}\to H$ is
a weak equivalence and 
each $\ps{A}{q}$ is the homotopy fiber of the map $k^{n}_{q}\colon
\ps{A}{q-1}\to H\vee \Sigma^{q+1} H\pi_{q}A$.  
Moreover,
this data has the following 
consistency and uniqueness  properties:
\begin{enumerate}
\item The natural map $\QC^{q+1}(\ps{A}{q-1};\pi_{q}A)\to
H^{q+1}(\ps{A}{q-1};\pi_{q}A)$ takes $k^{n}_{q}$ to the $R$-module
$k$-invariant $k_{q}=k^{0}_{q}$. 
\item If $\{\ps{A}{q},k^{n}_{q}\}$ and $\{\ps{A'}{q},k^{\prime
n}_{q}\}$ are two Postnikov towers for $A$ in $\ARH$ constructed as
iterated homotopy fibers as above, then any 
weak equivalence of Postnikov towers relating them sends $k^{n}_{q}$
to $k^{\prime n}_{q}$.
\end{enumerate}
\end{thm}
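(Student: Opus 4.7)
The plan is to construct $\{\ps{A}{q},k^{n}_{q}\}$ by induction on $q$, using the Postnikov tower from the previous theorem as a guide. The base case $\ps{A}{0}\simeq H$ requires no $k$-invariant. For the inductive step, I would begin by computing the homotopy fiber $F_{q}$ of the structure map $\ps{A}{q}\to \ps{A}{q-1}$ in $\ARH$, defined as the homotopy pullback along the augmentation $H\to \ps{A}{q-1}$. Since the forgetful functor to $R$-modules preserves limits, $F_{q}$ has the underlying $R$-module of the ordinary homotopy pullback; a long exact sequence calculation gives $\pi_{0}F_{q}=\pi_{0}H$, $\pi_{q}F_{q}=\pi_{q}A$, and $\pi_{s}F_{q}=0$ elsewhere.

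I would then prove a square-zero rigidity lemma: any $Y\in\ARH$ whose homotopy groups are concentrated in degrees $0$ and $q$, with $\pi_{0}Y=\pi_{0}H$ and $\pi_{q}Y=V$, is weakly equivalent in $\ARH$ to the square-zero algebra $H\vee \Sigma^{q}HV$. One proves this directly by lifting a set of $\pi_{0}H$-module generators of $V$ to maps of $E_{n}$ $R$-algebras from free $E_{n}$ $R$-algebras on $S^{q}$ into $Y$, comparing with the analogous maps into the square-zero model, and checking that the resulting map $H\vee \Sigma^{q}HV\to Y$ in $\ARH$ is a $\pi_{*}$-isomorphism. Applied to $F_{q}$, this identifies $F_{q}\simeq H\vee \Sigma^{q}H\pi_{q}A$.

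Next I would produce $k^{n}_{q}$ via the Hurewicz Theorem~\ref{thmhurewicz}. The map $\ps{A}{q}\to \ps{A}{q-1}$ is a $q$-equivalence, so Hurewicz identifies $\QH_{q+1}(\ps{A}{q-1},\ps{A}{q};H)\iso H_{q+1}(\ps{A}{q-1},\ps{A}{q};H)\iso \pi_{q}A$, where the second isomorphism is a direct computation of the underlying $R$-module cofiber. By the Universal Coefficient Theorem~\ref{thmuniv}, the identity $\pi_{q}A\to \pi_{q}A$ lifts to a class in $\QC^{q+1}(\ps{A}{q-1},\ps{A}{q};\pi_{q}A)$, and its image in $\QC^{q+1}(\ps{A}{q-1};\pi_{q}A)$ under the Exactness Axiom is $k^{n}_{q}$; its restriction to $\QC^{q+1}(\ps{A}{q};\pi_{q}A)$ is zero by exactness. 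That vanishing yields a weak pullback square in $\h\ARH$ and a comparison map $\ps{A}{q}\to A^{\sharp}$ into the homotopy pullback $A^{\sharp}$ of $k^{n}_{q}$ with the canonical split inclusion $H\to H\vee \Sigma^{q+1}H\pi_{q}A$. To check this is a weak equivalence I would compare fibers over $\ps{A}{q-1}$: the fiber of $A^{\sharp}\to \ps{A}{q-1}$ is $\Omega(H\vee \Sigma^{q+1}H\pi_{q}A)$, which the long exact sequence on underlying $R$-modules identifies with $H\vee \Sigma^{q}H\pi_{q}A\simeq F_{q}$, and the induced map on fibers is an isomorphism on $\pi_{q}$ by the construction of $k^{n}_{q}$ as the identity class, hence a weak equivalence by the rigidity lemma.

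Consistency (i) follows from naturality of the Hurewicz comparison with respect to the forgetful functor from $\ARH$ to $R$-modules over $H$. Uniqueness (ii) follows from the observation that a weak equivalence of Postnikov towers induces compatible isomorphisms of relative Quillen and $R$-module homology that preserve the canonical identity class in $\pi_{q}A$, and hence preserve the $k$-invariant class through the construction above. The main obstacle I expect is the verification that $\ps{A}{q}\to A^{\sharp}$ is a weak equivalence: the rigidity lemma together with the fiber identification reduce this to checking that the induced map on fibers is the identity on $\pi_{q}$, which requires carefully tracing $k^{n}_{q}$ back through the universal coefficient identification and its preimage in the relative cohomology.
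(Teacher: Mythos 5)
Your overall strategy is the same as the paper's: use the Hurewicz Theorem and Universal Coefficients to identify a relative Quillen cohomology group with $\Hom_{\pi_{0}R}(\pi_{q}A,\pi_{q}A)$, take the identity class as the ``canonical'' relative class, and define $k^{n}_{q}$ as its image in the absolute group. That part is right, and your consistency and uniqueness arguments (naturality, and tracking the identity class through comparisons) match the paper.

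The route diverges in two ways worth noting, one harmless and one where your version genuinely has more work to do. The harmless one: you work with the pair $(\ps{A}{q-1},\ps{A}{q})$ (presupposing a Postnikov tower from the preceding existence theorem), whereas the paper uses the pair $(\ps{A}{q-1},A)$ so it can \emph{build} the tower and the $k$-invariants simultaneously. Either pair gives $\Hom_{\pi_{0}R}(\pi_{q}A,\pi_{q}A)$ via Hurewicz, so both are fine. The substantive difference is at the identification of $\ps{A}{q}$ with the homotopy fiber. You first form $\ps{A}{q}$ (from the existence theorem) and then try to compare it with $A^{\sharp}$. As you correctly flag, the lift $\ps{A}{q}\to A^{\sharp}$ is not unique --- the set of lifts is a torsor over $\QC^{q}(\ps{A}{q};\pi_{q}A)$ --- and different lifts induce different maps on $\pi_{q}$ of the fibers. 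The vanishing of $k^{n}_{q}$ restricted to $\ps{A}{q}$ gives you \emph{a} lift, but not the right one automatically; you must use the full relative class $\ell^{n}_{q}\in\QC^{q+1}(\ps{A}{q-1},\ps{A}{q};\pi_{q}A)$, which carries a chosen nullhomotopy of the restriction, to get a \emph{canonical} lift, and only then can ``$\ell^{n}_{q}=\mathrm{id}$'' control the map on fiber $\pi_{q}$. The paper sidesteps this delicacy entirely by \emph{constructing} $\ps{A}{q}$: it represents $\ell^{n}_{q}$ by a nullhomotopy, uses the homotopy lifting property to get a strictly commuting square $A\to H'\to (H\vee M)_{f}$ over $\kappa$, forms the point-set pullback $F$, and defines $\ps{A}{q}$ as a cofibrant replacement of $F$ under $A$. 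Then there is nothing to compare, only homotopy groups to read off.

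Two smaller points. Your ``square-zero rigidity lemma'' is an unnecessary detour: once you have a map $\ps{A}{q}\to A^{\sharp}$ over $\ps{A}{q-1}$, a weak equivalence is detected purely by the induced map on $\pi_{*}$, and the paper never needs to identify the fiber $F_{q}$ as a square-zero algebra in $\ARH$. Your sketch of that lemma is also not convincing as stated: generating $\pi_{q}Y$ by maps from free $E_{n}$ $R$-algebras on spheres only gives a map from a free algebra, not from the square-zero object $H\vee\Sigma^{q}HV$, and the descent to the square-zero quotient and the compatibility with the augmentation in $\ARH$ both need an argument. Finally, your description of $F_{q}$ as ``the homotopy pullback along the augmentation $H\to\ps{A}{q-1}$'' has the arrow backwards and should be reexamined --- $H$ is terminal in $\ARH$ but not initial, and a section $H\to\ps{A}{q-1}$ of the augmentation need not exist; the fiber construction you want lives most naturally in $\AHH$ after applying $H\lsmaR(-)$, where $H$ is a zero object.
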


As a consequence of the uniqueness properties of the two theorems
above, it makes sense to talk about the Postnikov tower
$\{\ps{A}{q}\}$ and $E_{n}$ $R$-algebra $k$-invariants $k^{n}_{q}$ for
connective $E_{n}$ $R$-algebras $A$ that are not necessarily
cofibrant, constructing them by using a cofibrant approximation.

Before proving the Theorem~\ref{thmtower}, we state the following obstruction
theoretic consequences.  The identification of $\ps{A}{q}$ as a
homotopy fiber gives the following $E_{n}$ $R$-algebra analogue of
the elementary obstruction lemma.

\begin{cor}\label{corobsmapstep}
Let $A$ and $B$ be $E_{n}$ $R$-algebras over $H=H\pi_{0}A$, and let
$f_{q-1}\colon B\to \ps{A}{q-1}$ be a map in $\h\ARH$.  Then $f_{q-1}$
lifts in $\h\ARH$ to a map $f_{q}\colon 
B\to \ps{A}{q}$ if and only if $f_{q-1}^{*}k^{n}_{q}\in
\QC^{q+1}(B;\pi_{q}A)$ is zero; if such a lift exists, then after
choosing one, the set of such lifts is in one-to-one correspondence
with elements of $\QC^{q}(B;\pi_{q}A)$.
\end{cor}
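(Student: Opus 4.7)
My plan is to deduce this directly from Theorem~\ref{thmtower}, which identifies $\ps{A}{q}$ as the homotopy fiber in $\h\ARH$ of the $k$-invariant $k^{n}_{q}\colon \ps{A}{q-1}\to H\vee \Sigma^{q+1}H\pi_{q}A$, together with the description of the set of lifts into a homotopy fiber given in the paragraph preceding Theorem~\ref{thmtower}. Since the corollary is essentially a direct translation of the weak pullback property into cohomological language, the work is in matching up definitions rather than developing new machinery.

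For existence, I would apply the defining universal property of the weak pullback square
\[
\xymatrix@C-1pc{
\ps{A}{q}\ar[d]\ar[r]&H\ar[d]\\
\ps{A}{q-1}\ar[r]^-{k^{n}_{q}}&H\vee \Sigma^{q+1}H\pi_{q}A
}
\]
to the map $f_{q-1}\colon B\to \ps{A}{q-1}$. By that property, a lift $f_{q}\colon B\to \ps{A}{q}$ of $f_{q-1}$ exists in $\h\ARH$ if and only if the composite $k^{n}_{q}\circ f_{q-1}$ factors in $\h\ARH$ through the map $H\to H\vee \Sigma^{q+1}H\pi_{q}A$. Since $H$ is the terminal object in $\ARH$, such a factorization exists if and only if $k^{n}_{q}\circ f_{q-1}$ agrees with the composite of the terminal map $B\to H$ and the inclusion $H\to H\vee \Sigma^{q+1}H\pi_{q}A$, which is precisely the statement that $f_{q-1}^{*}k^{n}_{q}=0$ as an element of $\QC^{q+1}(B;\pi_{q}A)=\h\ARH(B,H\vee \Sigma^{q+1}H\pi_{q}A)$.

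For the parametrization of the set of lifts, I would invoke the assertion recorded just before Theorem~\ref{thmtower}: using point-set models of $k^{n}_{q}$ and of $H\to H\vee \Sigma^{q+1}H\pi_{q}A$ as a fibration and taking $\ps{A}{q}$ to be the strict pullback, the set of compatible maps $B\to \ps{A}{q}$ lifting $f_{q-1}$ (once one such exists) is a torsor over $\QC^{-1}(B;\Sigma^{q+1}H\pi_{q}A)$. Shifting, this group is $\QC^{0}(B;\Omega \Sigma^{q+1}H\pi_{q}A)=\QC^{0}(B;\Sigma^{q}H\pi_{q}A)=\QC^{q}(B;\pi_{q}A)$, which gives the claimed bijection after choosing a base lift.

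The only real technical point, and the step where I would be most careful, is justifying the torsor description of lifts: namely, that in $\h\ARH$ the set of weak-pullback lifts really is free and transitive under $\QC^{q}(B;\pi_{q}A)$. This comes down to choosing a fibration model for $k^{n}_{q}$ (possible by the model structure on $\ARH$ used throughout), so that the strict pullback computes the weak pullback and the standard fiber-sequence long exact sequence for $\QC^{*}(B;-)$ from Section~\ref{seco1} gives both the obstruction and the indeterminacy; everything else in the argument is a formal consequence of the universal property and the identifications made in Section~\ref{seco1}.
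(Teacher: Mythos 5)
Your proposal is correct and follows exactly the route the paper has in mind: existence of the lift is the weak-pullback universal property applied to $f_{q-1}$ together with the observation that $H$ is terminal in $\ARH$ (so factoring through $H$ means $f_{q-1}^{*}k^{n}_{q}=0$), and the indeterminacy is the free transitive action of $\QC^{-1}(B;M)\iso\QC^{0}(B;\Omega M)\iso\QC^{q}(B;\pi_{q}A)$ recorded in the paragraph preceding Theorem~\ref{thmtower}. The paper states the corollary without further proof precisely because it is this direct translation, so your write-up is a faithful expansion of the intended argument.
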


Putting this together for all $q$, we get the following corollary.

\begin{cor}\label{corobsmap}
Let $A$ and $B$ be $E_{n}$ $R$-algebras over $H=H\pi_{0}A$, and let
$f_{q}\colon B\to \ps{A}{q}$ be a map in $\h\ARH$. Then $f_{q}$ lifts
in $\h\ARH$ to a map $f\colon 
B\to A$ when an inductively defined sequence of obstructions $o_{s}\in
\QC^{s+1}(B;\pi_{s}A)$ are zero for all $s\geq q+1$.
\end{cor}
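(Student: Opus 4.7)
The plan is to iterate Corollary~\ref{corobsmapstep} up the Postnikov tower supplied by Theorem~\ref{thmtower}. Starting from the given map $f_{q}\colon B\to\ps{A}{q}$ in $\h\ARH$, I would construct lifts $f_{s}\colon B\to\ps{A}{s}$ for $s>q$ by induction on $s$. At the inductive step, having chosen $f_{s-1}$, I set
\[
o_{s} := f_{s-1}^{*}k^{n}_{s} \in \QC^{s+1}(B;\pi_{s}A).
\]
By Corollary~\ref{corobsmapstep}, a lift $f_{s}$ of $f_{s-1}$ through $\ps{A}{s}\to\ps{A}{s-1}$ exists in $\h\ARH$ if and only if $o_{s}=0$, and when one exists the set of such lifts is a torsor over $\QC^{s}(B;\pi_{s}A)$. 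The phrase ``inductively defined'' in the statement is needed precisely because $o_{s}$ depends on the previously chosen $f_{s-1}$: replacing $f_{s-1}$ by another lift (differing by an element of $\QC^{s-1}(B;\pi_{s-1}A)$) may alter $o_{s}$, so the obstruction sequence cannot be read off directly from $f_{q}$.

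Assuming a compatible sequence of choices is available for which every $o_{s}$ with $s\geq q+1$ vanishes, I obtain a coherent family $\{f_{s}\colon B\to\ps{A}{s}\}_{s\geq q}$ of maps in $\h\ARH$. To produce the desired lift $f\colon B\to A$, I would identify $A$ with the homotopy inverse limit of its Postnikov tower in $\ARH$: since $A$ is connective and $A\to\ps{A}{s}$ induces isomorphisms on $\pi_{t}$ for $t\leq s$, the canonical map $A\to\mathrm{holim}_{s}\ps{A}{s}$ in $\ARH$ is a weak equivalence, and the coherent family assembles to a map $B\to\mathrm{holim}_{s}\ps{A}{s}\simeq A$ in $\h\ARH$.

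The main hurdle is this assembly step, where a Milnor-type $\lim^{1}$ term could a priori intervene between the tower of homotopy classes of lifts and the set of homotopy classes of maps into the homotopy limit. I would handle this by the standard Postnikov argument carried out in the under-category $\ARH\bs B$: the indeterminacy lies in $\lim^{1}_{s}\QC^{s}(B;\pi_{s}A)$, and once the relevant $o_{s}$ vanish the transition maps between the sets of lifts at consecutive levels are surjective by the torsor description in Corollary~\ref{corobsmapstep}, so the tower is Mittag-Leffler and $\lim^{1}$ contributes nothing. All inputs are formal consequences of Theorem~\ref{thmtower} and Corollary~\ref{corobsmapstep}; no additional geometric content is required.
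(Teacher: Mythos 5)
Your approach --- iterating Corollary~\ref{corobsmapstep} up the tower and then assembling the resulting compatible family into a map to $A\simeq\mathrm{holim}_{s}\ps{A}{s}$ --- is exactly what the paper intends; the paper states this corollary as an immediate consequence (``putting this together for all $q$'') and invokes the homotopy-limit description of $A$ in the parallel Corollary~\ref{corobsstruct}. Your identification $o_{s}=f_{s-1}^{*}k^{n}_{s}$ and the observation that $o_{s}$ depends on the choice of $f_{s-1}$ are both correct.

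The $\lim^{1}$ discussion, however, is off on two counts. First, the worry is unnecessary: with $B$ cofibrant and the Postnikov tower arranged as a tower of fibrations between fibrant objects, the mapping spaces $\mathrm{Map}(B,\ps{A}{s})$ in $\ARH$ form a tower of fibrations, and for any such tower the comparison map $\pi_{0}\,\mathrm{holim}\to\lim\pi_{0}$ is \emph{surjective}; the $\lim^{1}$ term in the Milnor sequence measures only the failure of injectivity. It can affect how many lifts $f$ there are, but never whether one exists, which is all the corollary asserts. Second, the argument you give to kill $\lim^{1}$ does not work as stated. The relevant $\lim^{1}$ sits over the groups $\pi_{1}(\mathrm{Map}(B,\ps{A}{s});f_{s})$, not over $\QC^{s}(B;\pi_{s}A)$. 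Moreover, the torsor description in Corollary~\ref{corobsmapstep} controls the \emph{fibers} of the restriction maps between sets of lifts, not their \emph{images}: the map from lifts at level $s$ to lifts at level $s-1$ has image exactly the subset where $o_{s}$ vanishes, so it need not be surjective, and the tower of lift-sets is not Mittag-Leffler for the reason you cite. Fortunately neither point is needed, since surjectivity of $\pi_{0}\,\mathrm{holim}\to\lim\pi_{0}$ already produces the desired $f\colon B\to A$ once a coherent family with all $o_{s}=0$ has been found.
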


On the other hand, consider the case of an 
$R$-module $A$ with a fixed commutative $\pi_{0}R$-algebra
structure on $\pi_{0}A$ and $\pi_{0}A$-module structure on
$\pi_{*}A$.  Suppose $\ps{A}{q-1}$ has an $E_{n}$ $R$-algebra structure
over $H=H\pi_{0}A$, compatible with the given $\pi_{0}A$-module
structure on $\pi_{*}A$.  The $R$-module $k$-invariant $k^{0}_{q}$
and the augmentation $\ps{A}{q-1}\to H$ induce a map of $R$-modules
\[
\ps{A}{q-1}\to H\vee \Sigma^{q+1}\pi_{q}A
\]
such that the homotopy pullback of the inclusion of $H$ in $H\vee
\Sigma^{q+1}\pi_{q}A$ is weakly equivalent to $\ps{A}{q}$.  This map
is represented by a map in $\h\ARH$ if and only if $k^{0}_{q}$ is in the
image of $\QC^{q+1}(\ps{A}{q-1};\pi_{q}A)$.  Combining this
observation with Theorem~\ref{thmtower} above, we get the
following corollary.  

\begin{cor}\label{corobsstructstep}
Let $A$ be an $R$-module with a fixed commutative $\pi_{0}R$-algebra
structure on $\pi_{0}A$ and $\pi_{0}A$-module structure on $\pi_{*}A$.
An $E_{n}$ $R$-algebra structure on
$\ps{A}{q-1}$ over $H=H\pi_{0}A$ compatible with the $\pi_{0}A$-module
structure on $\pi_{*}A$ extends to a compatible $E_{n}$
$R$-algebra structure on $\ps{A}{q}$ if and only if the $R$-module
$k$-invariant $k^{0}_{q}\in H^{q+1}(\ps{A}{q-1};\pi_{q}A)$ lifts to an
element of $\QC^{q+1}(\ps{A}{q-1};\pi_{q}A)$.
\end{cor}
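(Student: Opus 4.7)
My plan is to realize this corollary as the direct combination of Theorem~\ref{thmtower} with the observation that the authors have already stated in the paragraph immediately preceding the corollary. The key input is property~(i) of Theorem~\ref{thmtower}, which says that the Quillen cohomology $k$-invariant $k^{n}_{q}$ maps to the $R$-module $k$-invariant $k^{0}_{q}$ under the natural comparison map $\QC^{q+1}(\ps{A}{q-1};\pi_{q}A)\to H^{q+1}(\ps{A}{q-1};\pi_{q}A)$.

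For the \emph{only if} direction, suppose we are given a compatible $E_{n}$ $R$-algebra structure on $\ps{A}{q}$ over $H$ extending the one on $\ps{A}{q-1}$. I would apply Theorem~\ref{thmtower} to the connective $E_{n}$ $R$-algebra $\ps{A}{q}$; the uniqueness clause~(ii) of that theorem (together with uniqueness of Postnikov towers in $\ARH$) identifies the $(q-1)$-st stage of its Postnikov tower with the given $\ps{A}{q-1}$, and produces a class $k^{n}_{q}\in \QC^{q+1}(\ps{A}{q-1};\pi_{q}A)$. Property~(i) of Theorem~\ref{thmtower} then says that this class lifts the $R$-module $k$-invariant $k^{0}_{q}$, which is exactly what was required.

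For the \emph{if} direction, given a lift $\tilde k\in \QC^{q+1}(\ps{A}{q-1};\pi_{q}A)$ of $k^{0}_{q}$, I would form the homotopy fiber $A'$ of the map $\tilde k\colon \ps{A}{q-1}\to H\vee \Sigma^{q+1}H\pi_{q}A$ in $\ARH$, constructed concretely as a pullback along a fibration. The forgetful functor from $\ARH$ to $R$-modules preserves such pullbacks, and under it $\tilde k$ becomes a representative of $k^{0}_{q}$ by property~(i); hence the underlying $R$-module of $A'$ is a homotopy pullback realizing $\ps{A}{q}$, as the authors note just before the corollary. This transports the $E_{n}$ $R$-algebra structure on $A'$ to $\ps{A}{q}$, and the tower map $\ps{A}{q}\to \ps{A}{q-1}$ is automatically an $\ARH$-map because $A'\to \ps{A}{q-1}$ is.

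The main (though minor) technical point to verify is compatibility of the resulting $\pi_{0}A$-module structure on $\pi_{*}\ps{A}{q}$ with the prescribed one. For $s<q$, this follows because $A'\to \ps{A}{q-1}$ is a map in $\ARH$ (so $\pi_{0}A$-linear on homotopy groups via the augmentations to $H$) and is an isomorphism on $\pi_{s}$. For $s=q$, the long exact sequence of homotopy groups of the $\ARH$ homotopy fiber sequence $\Sigma^{q}H\pi_{q}A\to A'\to \ps{A}{q-1}$ is a sequence of $\pi_{0}A$-modules, and the isomorphism $\pi_{q}A'\iso \pi_{q}A$ coming from it is $\pi_{0}A$-linear with respect to the given structure on the fiber term. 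This is the only step that is not immediate from Theorem~\ref{thmtower}, and it is routine once one tracks augmentations carefully.
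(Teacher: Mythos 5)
Your proposal is correct and takes essentially the same approach as the paper. The paper's own argument is the paragraph immediately preceding the corollary together with the terse phrase ``Combining this observation with Theorem~\ref{thmtower} above,'' and what you have written is a more detailed unpacking of exactly that: the ``only if'' direction is Theorem~\ref{thmtower}.(i) applied to $\ps{A}{q}$ (after identifying its Postnikov tower with the given one via uniqueness), and the ``if'' direction is the paper's observation that the homotopy pullback of the $H$-inclusion along a representative of $\tilde k$ realizes $\ps{A}{q}$ as an $R$-module. Your final paragraph on compatibility of the $\pi_0 A$-module structures is a careful check the paper leaves implicit, but it is routine and correct.
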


Since $A$ is the homotopy inverse limit of its Postnikov tower, we get
the following corollary. 

\begin{cor}\label{corobsstruct}
Let $A$ be an $R$-module with a fixed commutative $\pi_{0}R$-algebra
structure on $\pi_{0}A$ and $\pi_{0}A$-module structure on $\pi_{*}A$.
There exists a compatible $E_{n}$ $R$-algebra structure on $A$ if each
$R$-module $k$-invariant $k^{0}_{q}\in
H^{q+1}(\ps{A}{q-1},\pi_{q}A)$ lifts to an element of the inductively
defined group $\QC^{q+1}(\ps{A}{q-1},\pi_{q}A)$ (which is defined
only after lifting $k^{0}_{q-1}$) for all $q\geq 1$.
\end{cor}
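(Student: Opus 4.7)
The plan is to induct on $q$, using Corollary~\ref{corobsstructstep} to promote each section $\ps{A}{q-1}$ to $\ps{A}{q}$ as $E_n$ $R$-algebras over $H$, and then assemble the compatible structure on $A$ as the homotopy limit of the tower in $\ARH$.

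For the base case $q=0$, the $R$-module $\ps{A}{0}$ is weakly equivalent to $H=H\pi_{0}A$, which carries the canonical commutative $S$-algebra (in particular $E_{n}$ $R$-algebra) structure compatible with the given $\pi_{0}R$-algebra structure on $\pi_{0}A$. For the inductive step, assume $\ps{A}{q-1}$ has been equipped with an $E_{n}$ $R$-algebra structure over $H$ that is compatible with the prescribed $\pi_{0}A$-module structure on $\pi_{s}A$ for $s\le q-1$. Then the group $\QC^{q+1}(\ps{A}{q-1};\pi_{q}A)$ is defined, and by hypothesis the $R$-module $k$-invariant $k^{0}_{q}$ lifts to a class $k^{n}_{q}\in \QC^{q+1}(\ps{A}{q-1};\pi_{q}A)$. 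Applying Corollary~\ref{corobsstructstep}, this lift promotes $\ps{A}{q}$ (realized as the homotopy fiber in $\ARH$ of a representative of $k^{n}_{q}$) to an $E_{n}$ $R$-algebra over $H$, extending the structure and still compatible with the given $\pi_{0}A$-action on $\pi_{*}A$. The fibration model of homotopy fiber in $\ARH$ gives a map $\ps{A}{q}\to\ps{A}{q-1}$ of $E_{n}$ $R$-algebras, so the extensions fit into a tower in $\ARH$ whose underlying $R$-module tower is (equivalent to) the $R$-module Postnikov tower of $A$ by clause (i) of Theorem~\ref{thmtower}.

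Finally, take the homotopy inverse limit of this tower in $\ARH$, modeled strictly by replacing the tower maps by fibrations and taking the sequential limit. The result is an $E_{n}$ $R$-algebra $A_{\infty}$ equipped with a compatible $\pi_{0}A$-algebra structure. Its underlying $R$-module is the homotopy limit of the $R$-module Postnikov tower, hence weakly equivalent as an $R$-module to $A$ (since $A$ is connective). This weak equivalence transports the $E_{n}$ $R$-algebra structure on $A_{\infty}$ to one on $A$ in the homotopy category, compatible with the prescribed data.

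The main subtlety is clause (ii): at each inductive stage the lift of $k^{0}_{q}$ is not uniquely determined but is only required to exist, and successive choices could a priori drift away from producing $A$ itself in the limit. This is handled by Theorem~\ref{thmtower}(i), which guarantees that \emph{any} $E_{n}$ $R$-algebra $k$-invariant $k^{n}_{q}$ lifting $k^{0}_{q}$ yields a stage $\ps{A}{q}$ whose underlying $R$-module is the Postnikov stage of $A$; thus the inductively chosen lifts automatically recover the correct $R$-module $A$ as the limit, regardless of which representatives of the Quillen cohomology lifts are chosen. With this observation the inductive obstruction scheme and the passage to the limit combine to yield the desired compatible $E_{n}$ $R$-algebra structure on $A$.
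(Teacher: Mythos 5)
Your proof is correct and follows essentially the same approach as the paper, which simply combines Corollary~\ref{corobsstructstep} at each stage with the observation that $A$ is the homotopy inverse limit of its Postnikov tower. The only small inaccuracy is the attribution of the ``no drift'' claim to Theorem~\ref{thmtower}(i): that clause concerns an $E_{n}$ $R$-algebra that already exists, whereas what you actually need -- that the homotopy fiber of \emph{any} lift of $k^{0}_{q}$ has underlying $R$-module weakly equivalent to $\ps{A}{q}$ -- is precisely the content built into the proof of Corollary~\ref{corobsstructstep} (the homotopy pullback of $H\to H\vee\Sigma^{q+1}\pi_{q}A$ along any map representing $k^{0}_{q}$ is $\ps{A}{q}$ at the $R$-module level).
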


We now proceed to prove Theorem~\ref{thmtower}.  It is convenient to
construct the $\ps{A}{q}$ as cofibrant objects and the maps
$\ps{A}{q}\to \ps{A}{q-1}$ as fibrations.  At the bottom level, we
construct $\ps{A}{0}$ by factoring the map $A\to H$ as a cofibration
$A\to \ps{A}{0}$ followed by an acyclic fibration $\ps{A}{0}\to H$.

Assume by induction we have constructed $\ps{A}{q-1}$.  Then the
Hurewicz Theorem for $H$-modules \cite[IV.3.6]{ekmm} and the Hurewicz
Theorem~\ref{thmhurewicz} give us canonical isomorphisms
\begin{align*}
\QH_{q+1}(\ps{A}{q-1},A;H)&\iso H_{q+1}(\ps{A}{q-1},A;H)\\
&\iso \pi_{q+1}(H\sma_{R}\ps{A}{q-1},H\sma_{R}A)
\end{align*}
and tell us that $\QH_{s}(\ps{A}{q-1},A;H)=0$ for $s\leq q$.  The Universal
Coefficient Theorem for $R$-modules \cite[IV.4.5]{ekmm} then gives us
a canonical isomorphism
\[
\pi_{q+1}(H\sma_{R}\ps{A}{q-1},H\sma_{R}A) \iso
\pi_{0}H\otimes_{\pi_{0}R}\pi_{q+1}(\ps{A}{q-1},A) \iso 
\pi_{0}A\otimes_{\pi_{0}R}\pi_{q}A,
\]
and the Universal Coefficient Theorem~\ref{thmuniv} gives us a canonical
isomorphism
\begin{align*}
\QC^{q+1}(\ps{A}{q-1},A;\pi_{q}A)&\iso
\Hom_{\pi_{0}A}(\pi_{q+1}(H\sma_{R}\ps{A}{q-1},H\sma_{R}A),\pi_{q}A)\\
&\iso \Hom_{\pi_{0}A}(\pi_{0}A\otimes_{\pi_{0}R}\pi_{q}A,\pi_{q}A)\\
&\iso \Hom_{\pi_{0}R}(\pi_{q}A,\pi_{q}A).
\end{align*}
We let $\ell^{n}_{q}$ be the element of
$\QC^{q+1}(\ps{A}{q-1},A;\pi_{q}A)$ that corresponds to the identity
element of $\Hom_{\pi_{0}R}(\pi_{q}A,\pi_{q}A)$, and we let
$k^{n}_{q}$ be its image in $\QC^{q+1}(\ps{A}{q-1};\pi_{q}A)$.
Note that by construction, the natural map $\QC^{q+1}(\ps{A}{q-1};\pi_{q}A)\to
H^{q+1}(\ps{A}{q-1};\pi_{q}A)$ takes $k^{n}_{q}$ to
the $R$-module $k$-invariant $k^{0}_{q}$.

Let $M=\Sigma^{q+1}H\pi_{q}A$, and choose a fibrant approximation
$(H\vee M)_{f}$ of $H\vee M$ in $\AHH$.  Since we assumed $A$ and
$\ps{A}{q-1}$ are cofibrant, a representative for
$k^{n}_{q}$ is then a map $\kappa\colon \ps{A}{q-1}\to (H\vee M)_{f}$ in
$\ARH$ and a representative for $\ell^{n}_{q}$ is a homotopy in
$\ARH$ from the composite map $A\to (H\vee M)_{f}$ to the trivial map
$A\to H\to (H\vee M)_{f}$.  Factoring $H\to (H\vee M)_{f}$ as an
acyclic cofibration $H\to H'$ and a fibration $H'\to (H\vee M)_{f}$,
and applying the homotopy lifting property, we get a commutative
diagram
\[
\xymatrix@C-1pc{%
A\ar[r]^-{\lambda}\ar[d]&H'\ar[d]\\
\ps{A}{q-1}\ar[r]_-{\kappa}&(H\vee M)_{f}.
}
\]
We form the homotopy fiber $F$ of $k^{n}_{q}$ as the pullback of
$H'\to (H\vee M)_{f}$ along $\kappa$, and we construct $\ps{A}{q}$ by
factoring the induced map $A\to F$ as a cofibration $A\to \ps{A}{q}$
followed by an acyclic fibration $\ps{A}{q}\to F$.  By construction,
$\pi_{q}\ps{A}{q}$ is canonically isomorphic to $\pi_{q}A$ and the map
$A\to \ps{A}{q}$ induces this canonical isomorphism.  This completes
the inductive step and the 
construction of the Postnikov tower as a tower of iterated homotopy fibers.

The consistency statement~(i) follows from the construction of the
$k^{n}_{q}$ above; thus, it remains to prove the uniqueness
statement~(ii).  A zigzag of weak equivalences of Postnikov towers in
particular restricts to a zigzag of weak equivalences of pairs
\[
(\ps{A}{q-1},A) \simeq (\ps{A'}{q-1},A) 
\]
in $\ARH$, which induces an isomorphism on $\QC^{*}$.  Naturality of
the Hurewicz homomorphism implies that the following diagram commutes
\[
\xymatrix@-1pc{%
\QC^{q+1}(\ps{A'}{q-1},A;\pi_{q}A)\ar[r]^-{\iso}\ar[d]_-{\iso}
&H^{q+1}(\ps{A'}{q-1},A;\pi_{q}A)\ar[d]_-{\iso}\ar[r]^-{\iso}
&\Hom_{\pi_{0}R}(\pi_{q}A,\pi_{q}A)\ar[d]_-{\iso}^-{\id}\\
\QC^{q+1}(\ps{A}{q-1},A;\pi_{q}A)\ar[r]_-{\iso}
&H^{q+1}(\ps{A}{q-1},A;\pi_{q}A)\ar[r]_-{\iso}
&\Hom_{\pi_{0}R}(\pi_{q}A,\pi_{q}A)
}
\]
where the right vertical map is the identity.  The hypothesis that
$\ps{A}{q}$ and $\ps{A'}{q}$ are the homotopy fibers of $k^{n}_{q}$
and $k^{\prime n}_{q}$ imply that the images of $k^{n}_{q}$ and
$k^{\prime n}_{q}$ in $\QC^{q+1}(A;\pi_{q}A)$ are zero, which implies
that $k^{n}_{q}$ is in the image of 
$\QC^{q+1}(\ps{A}{q-1},A;\pi_{q}A)$ and $k^{\prime }$ is in the image
of $\QC^{q+1}(\ps{A'}{q-1},A;\pi_{q}A)$.  Choosing
a lift for each, we can identify the corresponding element of
$\Hom_{\pi_{0}R}(\pi_{q}A,\pi_{q}A)$ as the map on $\pi_{q}$ induced by
the map $A\to \ps{A}{q}$ for $k^{n}_{q}$ and the map $A\to \ps{A'}{q}$
for $k^{\prime n}_{q}$.  Thus, both lifts must correspond to the
identity map.  It follows that the zigzag of weak equivalences sends
$k^{\prime n}_{q}$ to $k^{n}_{q}$.  This completes the proof of
Theorem~\ref{thmtower}.

\section{Proof of the Main Theorem}\label{secmainthm}

We now apply the obstruction theory of the previous section to prove
the main theorem. We fix a prime $p$ and argue by induction up the
Postnikov tower as in Corollary~\ref{corobsstruct}.

As a base case, we have 
\[
\ps{MU_{(p)}}{1}=\ps{MU_{(p)}}{0}\iso H\bZ_{(p)},\qquad \text{and}\qquad 
\ps{BP}{1}=\ps{BP}{0}\iso H\bZ_{(p)}
\]
are $E_{\infty}$ ring spectra and isomorphic.  In particular, we have
maps of $E_{4}$ ring spectra 
\[
MU_{(p)}\to \ps{BP}{1}\qquad \text{and}\qquad
\ps{BP}{1}\to \ps{MU_{(p)}}{1}
\]
over $H\bZ_{(p)}$.
We assume by induction that $\ps{BP}{2n-1}$ is an $E_{4}$ ring
spectrum and that we have constructed maps of $E_{4}$ ring spectra
\[
MU_{(p)}\to \ps{BP}{2n-1}\qquad \text{and}\qquad
\ps{BP}{2n-1}\to \ps{MU_{(p)}}{2n-1}
\]
over $H\bZ_{(p)}$.
We then argue that we can extend the $E_{4}$ ring structure on
$\ps{BP}{2n-1}$ to an $E_{4}$ ring structure on $\ps{BP}{2n+1}$ and
that we can obtain maps of $E_{4}$ ring spectra
\[
MU_{(p)}\to \ps{BP}{2n+1}\qquad \text{and}\qquad
\ps{BP}{2n+1}\to \ps{MU_{(p)}}{2n+1},
\]
over $H\bZ_{(p)}$
which extend the previous pair of maps.  This then will prove that
$BP$ has the structure of an $E_{4}$ ring spectrum.

As part of the inductive argument, we will have to prove the following
lemma about the Quillen homology and cohomology of the Postnikov sections.

\begin{lem}\label{lemmain}
Let $A=\ps{MU_{(p)}}{2n+1}$ or $A=\ps{BP}{2n+1}$.  In degrees $\leq
2n+1$, the $E_{4}$ Quillen homology $\QH[4]_{*}(A;\bZ_{(p)})$ and
$E_{4}$ Quillen cohomology $\QC[4]^{*}(A;\bZ_{(p)})$ are concentrated
in even degrees and torsion free.
\end{lem}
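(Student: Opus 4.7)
The plan is to compute $\QH[4]_*(A;\bZ_{(p)})$ in degrees $\leq 2n+1$ by iterating the bar spectral sequence of Theorem~\ref{thmbarssE1} four times, starting from the ordinary $R$-module homology $H_*(A;\bZ_{(p)})$; the cohomological conclusion will then follow from the universal coefficient spectral sequence (Theorem~\ref{thmuniv}) together with torsion-freeness of the homology, which causes the $\Ext$ spectral sequence to collapse to $\Ext^{0}$.

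For the base case, the Postnikov sections $\ps{MU_{(p)}}{2n+1}$ and $\ps{BP}{2n+1}$ agree with the untruncated spectra through degree $2n+1$, where the classical calculations give $H_*(MU_{(p)};\bZ_{(p)}) \iso \bZ_{(p)}[b_1,b_2,\ldots]$ with $|b_i|=2i$ and $H_*(BP;\bZ_{(p)}) \iso \bZ_{(p)}[m_1,m_2,\ldots]$ with $|m_i|=2(p^i-1)$. Both are polynomial algebras on even-degree generators, so the initial algebra $B^{0}_*$ of Theorem~\ref{thmbarssE2} is polynomial on even generators, concentrated in even degrees, and free over $\bZ_{(p)}$ in the relevant range.

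Next I would iterate the bar spectral sequence four times, showing inductively that each $B^{j}_*$ (for $j=1,2,3,4$) remains concentrated in even degrees and free as a $\bZ_{(p)}$-module through the appropriate range. At each stage, the $E^{2}$-page is $\Tor^{B^{j}_*}(\bZ_{(p)},\bZ_{(p)})$ by Theorem~\ref{thmbarssE2}; standard Koszul-resolution calculations identify an alternation between polynomial, exterior, and divided power types, and tracking the degree shift by $j+1$ in the abutment $B^{j+1}_* \iso \pi_*H \oplus \QH[j+1]_{*-(j+1)}$ keeps things in even total degrees at each iteration. Collapse at $E^{2}$ is enforced by the derivation property of the differentials (Theorem~\ref{thmbarssmult}), since the algebra generators of $E^{2}$ sit in filtration low enough to prevent nontrivial differentials; hidden multiplicative extensions in the abutment are pinned down by the Dyer--Lashof compatibility of Theorems~\ref{thmolddl} and~\ref{thmbardl}.

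The main obstacle will be the later stages of the iteration (especially $j=2,3$), where divided power algebras and their bar constructions appear and the distinctions among polynomial, exterior, and divided power structures are most delicate -- in particular at $p=2$, where the Dyer--Lashof squaring operations on generators must be identified with Frobenius-like terms in the associated graded. The power-operation compatibility results will be essential here to ensure that the hidden extensions yield the expected even-degree, torsion-free structure, rather than introducing $p$-torsion or shifting classes to odd degrees, as the spectral sequences are assembled.
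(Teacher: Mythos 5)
Your overall strategy — iterate the bar spectral sequence four times, control multiplicative extensions with Dyer--Lashof operations, then pass to cohomology via universal coefficients — matches the paper's plan at the level of headings, but there are two substantive gaps that would cause the argument to fail as written.

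First, you propose to run the iteration with $\bZ_{(p)}$ coefficients throughout. This does not work, because the Dyer--Lashof operations you invoke (Theorems~\ref{thmolddl} and~\ref{thmbardl}) are mod-$p$ operations: they exist only for $F=H\bF_p$. They are not available to resolve multiplicative extensions in a $\bZ_{(p)}$-coefficient bar spectral sequence. The paper instead runs the whole iteration over $\bF_p$, computing $B^q_*=\bF_p\oplus \QH[q]_{*-q}(A;\bF_p)$ for $q=1,\dots,4$, shows that $\QH[4]_*(A;\bF_p)$ is concentrated in even degrees in the relevant range, and only then lifts back to $\bZ_{(p)}$ (even concentration mod $p$ implies even concentration and torsion-freeness integrally). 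Your proposal inverts this and never supplies the mechanism that makes the extensions controllable.

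Second, you do not engage with the central computational point: at stage $j=2$ the $E^2$-page is a divided power (equivalently, over $\bF_p$, a tensor product of truncated polynomial) algebra, and the lemma is false at the level of the associated graded unless the truncation relations are killed by hidden multiplicative extensions. The paper handles this by choosing the polynomial generators $x_{i,j}$ of $H_*(MU;\bF_p)$ so that $x_{i,j+1}=\oQ x_{i,j}$ for a specific top Dyer--Lashof operation $\oQ$, and then showing $(\gamma_{p^k}(\sigma x_{i,j}))^p=\gamma_{p^k}(\sigma x_{i,j+1})$, so that $B^2_*$ is an honest polynomial algebra. Crucially, for $A=\ps{BP}{2n+1}$ this cannot be done directly, since $BP$ is not known to be $E_\infty$ and the available operations in the $E_4$ setting are too limited; the paper transfers the extension information along the inductively constructed $E_4$ ring map $MU_{(p)}\to\ps{BP}{2n+1}$ (which sends $x^{MU}_{0,j}\mapsto x^{BP}_{0,j}$). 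Your proposal says only that ``power-operation compatibility results will be essential,'' but does not identify this transfer step, which is exactly where the inductive hypothesis on the existence of that $E_4$ map enters the proof of the lemma. Without it, the $BP$ case of the lemma is out of reach. (A smaller point: the $B^j_*$ do not stay in even degrees at each stage — $B^1_*$ and $B^3_*$ are exterior on odd generators; only $B^0_*$, $B^2_*$, $B^4_*$ are even.)
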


As a consequence of this lemma, we see that the $E_{4}$ Quillen
cohomology of $MU_{(p)}$ and of $BP$ is concentrated in even degrees
for the $E_{4}$ structure on $BP$ constructed above.  If $BP'$ denotes
$BP$ with any $E_{4}$ structure, obstruction theory
(Corollary~\ref{corobsmap}) then says that there exists a map of
$E_{4}$ ring spectra $BP$ to $BP'$ over $H\bZ_{(p)}$.  Since this map
preserves the unit, as a self-map on the underlying spectrum $BP$, it
is an isomorphism on $H^{0}(BP;\bF_{p})$, and so is an isomorphism on
$H^{*}(BP;\bF_{p})$, and so is an isomorphism in the stable category.  The map
$BP\to BP'$ is therefore a weak equivalence of $E_{4}$ ring spectra.
This shows that any two $E_{4}$ ring structures on $BP$ are isomorphic
in the homotopy category of $E_{4}$ ring spectra, finishing the proof
of Theorem~\ref{mainthm}.

Before proceeding with the argument, we note that as a byproduct of
the outline above, we get maps of $E_{4}$ ring spectra $MU_{(p)}\to
BP$ and $BP\to MU_{(p)}$.  (Such maps must also exist by
Lemma~\ref{lemmain} and obstruction theory Corollary~\ref{corobsmap}.)
The composite map $BP\to BP$ is a weak equivalence, and so
pre-composing its inverse with the map $BP\to MU_{(p)}$, the composite
self map of $MU_{(p)}$ is now an idempotent $E_{4}$ ring map,
factoring through $BP$, and splitting off $BP$ as the unit summand.
The construction gives no hint or details about which idempotent map
of $MU_{(p)}$ this is.  We hope to return to this question in a future
paper.

We now begin the proof of the inductive step.  We start with the
inductively constructed map of $E_{4}$ ring spectra $\ps{BP}{2n-1}\to
\ps{MU_{(p)}}{2n-1}$.  Ignoring the $E_{4}$ structure, this map
extends to a map in the stable category from $BP$ to $MU_{(p)}$; since
this extension is an isomorphism on $H^{0}(-;\bF_{p})$, it is split by
a map in the stable category $s\colon MU_{(p)}\to BP$.  Choosing such
a splitting, we have that the spectrum-level $k$-invariant 
\[
k^{0}_{2n}(BP)\in H^{2n+1}(\ps{BP}{2n-1},\pi_{2n}BP)
\]
for $BP$ is the image of the
spectrum-level $k$-invariant 
\[
k^{0}_{2n}(MU_{(p)})\in H^{2n+1}(\ps{MU_{(p)}}{2n-1},\pi_{2n}MU_{(p)})
\]
under the map induced by $\ps{BP}{2n-1}\to \ps{MU_{(p)}}{2n-1}$ and
the map $s_{*}\colon \pi_{*}MU_{(p)}\to \pi_{*}BP$.  Since $MU_{(p)}$
is an $E_{4}$ ring spectrum, its spectrum-level $k$-invariant
$k^{0}_{2n}(MU_{(p)})$ lifts to its $E_{4}$ ring spectrum
$k$-invariant 
\[
k^{4}_{2n}(MU_{(p)})\in \QC[4]^{2n+1}(\ps{MU_{(p)}}{2n-1},\pi_{2n}MU_{(p)}).
\]
The image of this under the $E_{4}$ ring map $\ps{BP}{2n-1}\to
\ps{MU_{(p)}}{2n-1}$ and the map $s_{*}\colon \pi_{*}MU_{(p)}\to
\pi_{*}BP$ provides a lift to
$\QC[4]^{2n+1}(\ps{BP}{2n-1},\pi_{2n}BP)$ of $k^{0}_{2n}(BP)$.  This
constructs $\ps{BP}{2n+1}=\ps{BP}{2n}$ as an $E_{4}$ ring spectrum.

Looking at the construction in the previous paragraph, it does not
follow immediately that the $E_{4}$ ring map $\ps{BP}{2n-1}\to
\ps{MU_{(p)}}{2n-1}$ extends to an $E_{4}$ ring map $\ps{BP}{2n+1}\to
\ps{MU_{(p)}}{2n+1}$.  However, Lemma~\ref{lemmain} stated above
implies that the obstructions in
\[
\QC[4]^{2n+1}(\ps{BP}{2n+1},\pi_{2n}MU_{(p)})\qquad \text{and}\qquad
\QC[4]^{2n+1}(\ps{MU_{(p)}}{2n+1},\pi_{2n}BP)
\]
to extending the $E_{4}$ ring maps
\[
\ps{BP}{2n-1}\to \ps{MU_{(p)}}{2n-1}
\qquad \text{and}\qquad
MU_{(p)}\to \ps{BP}{2n-1}.
\]
to $E_{4}$ ring maps
\[
\ps{BP}{2n+1}\to \ps{MU_{(p)}}{2n+1}
\qquad \text{and}\qquad
MU_{(p)}\to \ps{BP}{2n+1}.
\]
are both zero. Thus, the completion of the inductive step reduces to 
Lemma~\ref{lemmain}. 

The rest of the section is devoted to the proof of
Lemma~\ref{lemmain}.  The role of the inductively hypothesized $E_{4}$
ring map $MU_{(p)}\to \ps{BP}{2n-1}$ is that we need it to prove the
part of the lemma concerned with $BP$.  The part of the lemma
concerned with $MU_{(p)}$ is independent of any facts about $BP$, and
so provides the extension $MU_{(p)}\to \ps{BP}{2n+1}$.  A rigorous
organization would be to prove the result for $MU_{(p)}$ first and
then go back and prove the result for $BP$; however, to avoid needless
repetition, we will do the argument all at once.  In fact, since the
Thom isomorphism gives a weak equivalence of augmented $E_{\infty}$
(and hence $E_{4}$) $H\bZ_{(p)}$-algebras
\[
H\bZ_{(p)}\sma MU \simeq H\bZ_{(p)} \sma BU_{+},
\]
the lemma for $MU$ follows from Singer's computation of the cohomology
of $B^{4}BU$ \cite{SingerBU} (see \cite[4.7]{AHSCube} for an easier
computation of this special case).  Nevertheless, we must go through
the computation for $MU$ to obtain the computation for $BP$.

For the proof of the lemma, we need a fact about the Dyer-Lashof
operations on $MU$.  Let $a_{s}$ denote the polynomial generator in
$H_{2s}(MU;\bF_{p})$, which under the Thom isomorphism corresponds to
the standard generator $b_{s}$ in $H_{2s}(BU;\bF_{p})$.  According to
\cite[IX.7.4.(i)]{lms} and 
\cite[Theorem~6]{KochmanDLBU}, we have
\begin{align*}
Q^{s+1}a_{s} &= a_{s+(s+1)(p-1)} + \text{decomposibles} &&p>2\\
Q^{2s+2}a_{s} &= a_{s+(s+1)} + \text{decomposibles} &&p=2.
\end{align*}
For convenience, abbreviate this operation as $\oQ$: On an element $x$ in
degree $2s$, 
\[
\oQ x= Q^{s+1}x \quad(p>2), \qquad \oQ x=Q^{2s+2}x \quad(p=2);
\]
this is a well-defined operation on the homology of $E_{4}$ ring
spectra.  For $\ps{BP}{2n+1}$, when $n\geq p-1$, we have an element
$\xi_{1}$ (for $p>2$) or $\xi_{1}^{2}$ (for $p=2$) in
$H_{2p-2}(\ps{BP}{2n+1};\bF_{p})$ mapping to the correspondingly named
element in $\aA_{2p-2}=H_{2p-2}(H\bF_{p};\bF_{p})$.  Since the map from
$\ps{BP}{2n+1}$ to $H\bF_{p}$ is an $E_{4}$ ring map, we can compute
the Dyer-Lashof operations on this class from the Dyer-Lashof
operations in $H_{*}(H\bF_{p};\bF_{p})$, computed in
\cite[III.2.2--3]{hinfty}.  In particular, we have that in the range
of dimensions where $H_{*}(\ps{BP}{2n+1};\bF_{p})=H_{*}(BP;\bF_{p})$
(i.e., $*\leq 2n+1$), $H_{*}(\ps{BP}{2n+1};\bF_{p})$ is the polynomial
algebra on generators $\xi_{1}, \oQ \xi_{1}, \oQ^{2}\xi_{1}, \dotsc$
for $p>2$ or $\xi_{1}^{2}, \oQ \xi_{1}^{2}, \oQ^{2}\xi_{1}^{2},
\dotsc$ for $p=2$. 

To take advantage of the Dyer-Lashof operation $\oQ$, we reorganize
the polynomial generators of $H_{*}(MU;\bF_{p})$ and
$H_{*}(\ps{BP}{2n+1};\bF_{p})$ as follows.  For $H_{*}(MU;\bF_{p})$,
let $x^{MU}_{0,0}=a_{p-1}$, and choose elements $x^{MU}_{i,j}$ for
$i\geq 1$, $j\geq 0$ in $H_{2s_{i,j}}(MU;\bF_{p})$ such that
\begin{enumerate}
\item $x^{MU}_{i,j+1}=\oQ x^{MU}_{i,j}$ (so $s_{i,j+1}=p(s_{i,j}+1)-1$), and
\item $H_{*}(MU;\bF_{p})$ is polynomial on $x^{MU}_{i,j}$, $i\geq 0,j\geq 0$.
\end{enumerate}
For example, having chosen $x^{MU}_{0,0},\dotsc,x^{MU}_{k,0}$, then
$x^{MU}_{k+1,0}$ can be chosen as $a_{\ell}$ for the smallest positive
integer $\ell$ not among the numbers $s_{i,j}$ for $0\leq i\leq k,
j\geq 0$.  By slight abuse, we will regard the elements $x^{MU}_{i,j}$ with
$2s_{i,j}\leq 2n+1$ as elements of
$H_{*}(\ps{MU_{(p)}}{2n+1};\bF_{p})$; then we have that in degrees
$\leq 2n+1$, $H_{*}(\ps{MU_{(p)}}{2n+1};\bF_{p})$ is polynomial on the
elements $x^{MU}_{i,j}$ with $i\geq 0,j\geq 0$, and $2s_{i,j}\leq 2n+1$.

For $\ps{BP}{2n+1}$, when $n>p-1$, we take $x^{BP}_{0,0}$ to be $\xi_{1}$
for $p>2$ or $\xi_{1}^{2}$ for $p=2$, and $x^{BP}_{0,j+1}=\oQ
x^{BP}_{0,j}$.  Then in degrees $\leq 2n+1$,
$H_{*}(\ps{BP}{2n+1};\bF_{p})$ is polynomial on the elements
$x^{BP}_{0,j}$ with $j\geq 0$ and $2s_{0,j}\leq 2n+1$ (whether or not
$n>p-1$).  Looking at cohomology, we see that the map $MU_{(p)}\to
H\bF_{p}$ sends $a_{p-1}=x^{MU}_{0,0}$ to $\xi_{1}$ or $\xi_{1}^{2}$,
and so it follows that the given map $MU_{(p)}\to \ps{BP}{2n+1}$ sends
$x^{MU}_{0,0}$ to $x^{BP}_{0,0}$ when $n>p-1$.

We are now ready to start the proof of the lemma.  Let $A$ be one of
$\ps{MU}{2n+1}$ or $\ps{BP}{2n+1}$ and write $x_{i,j}$ for the
generator $x^{MU}_{i,j}$ or $x^{BP}_{i,j}$ above, where in the case of
$BP$, we understand $i=0$.  The lemma concerns Quillen homology and
cohomology with coefficients in $\bZ_{(p)}$ and we will use the Universal
Coefficient Theorem~\ref{thmuniv} to obtain this information from
the Quillen homology with coefficients in $\bF_{p}$.  We compute 
\[
B^{q}_{*}=B^{q}_{*}(A)=\bF_{p}\oplus \QH[q]_{*-q}(A;\bF_{p})
\]
in degrees
$\leq 2n+1$ for $q=1,2,3,4$ inductively as follows.

To compute $B^{1}_{*}$, we apply the bar construction spectral
sequence (Theorem~\ref{thmbarssE1}).  Looking at the $E^{1}$ page and
its differential, we see that the $E^{2}$ page for computing
$B^{1}_{*}$ is the exterior algebra on generators $\sigma x_{i,j}$ (in
bidegree $1,2s_{i,j}$) in internal degrees $t\leq 2n+1$.  Because the
sequence is multiplicative (Theorem~\ref{thmbarssmult}.(i)), there can
be no non-zero differentials starting in these degrees, and therefore also no
non-zero differentials hitting these degrees.  Passing to $E^{\infty}$, we see
that the associated graded of $B^{1}_{*}$ is isomorphic to this
exterior algebra in degrees $\leq 2n+2$.  Because $B^{1}_{*}$ is a
Hopf algebra with its generators (in this degree range) primitive and
in odd degree (by Theorem~\ref{thmbarssmult}.(ii)), it follows that
$B^{1}_{*}$ is this exterior algebra in degrees $\leq 2n+2$.

Applying the bar construction spectral sequence again, we see that
the $E^{2}$ page for computing $B^{2}_{*}$ is the divided
power algebra on generators $\sigma^{2} x_{i,j}$ (in bidegree
$1,2s_{i,j}+1$) for $t\leq 2n+2$.  Equivalently, letting
$\gamma_{p^{k}}(\sigma x_{i,j})$ denote the element of
$E^{2}_{p^{k},2p^{k}(2s_{i,j}+1)}$ represented by
\[
\langle \underbrace{\sigma x_{i,j}\mid\dotsb \mid \sigma
x_{i,j}}_{p^{k}\text{ factors}}\rangle \in B_{p^{k}}(B^{1}_{*})=B^{1}_{*}\otimes \dotsb \otimes B^{1}_{*},
\]
then $\sigma^{2}x_{i,j}=\gamma_{1}(\sigma x_{i,j})$ and $E^{2}_{*,*}$ is
the tensor product of truncated polynomial 
algebras
\[
\bigotimes_{i,j,k} 
\bF_{p}[\gamma_{p^{k}}(\sigma x_{i,j})]/(\gamma_{p^{k}}(\sigma x_{i,j}))^{p}
\]
in internal degrees $t\leq 2n+2$.  Since this is concentrated in even
degrees, nothing in internal degree $t\leq 2n+2$ can be hit by a
non-zero differential, and nothing in total degree $\leq 2n+3$ can have a
non-zero differential.  Thus, in total degrees $\leq 2n+3$,
$E^{2}=E^{\infty}$, and we can identify the associated graded of
$B^{2}_{*}$ in these degrees as the tensor product of truncated
polynomial algebras above.  Applying Theorem~\ref{thmolddl} and
using the fact that $\oQ x_{i,j}=x_{i,j+1}$, we see 
that in $B^{2}_{*}$,  
\[
(\sigma^{2}x_{i,j})^{p}=Q^{s}\sigma^{2}x_{i,j}=\sigma^{2}Q^{s}x_{i,j}=\sigma^{2}x_{i,j+1}
\]
(for $s=s_{i,j}+1$ if $p>2$ or $s=2s_{i,j}+2$ if $p=2$).
In the case when $A=\ps{MU_{(p)}}{2n+1}$, because $A$ is an $E_{\infty}$
ring spectrum, we have all Dyer-Lashof operations, and applying
Theorem~\ref{thmbardl}, we see more generally that  
\[
(\gamma_{p^{k}}(\sigma x_{i,j}))^{p}=\gamma_{p^{k}}(\sigma x_{i,j+1}).
\]
It follows in this case that in degrees $\leq 2n+3$, $B^{2}_{*}$ is
the polynomial algebra on generators $z_{i,k}=\gamma_{p^{k}}(\sigma
x_{i,0})$ for $i\geq 0, k\geq 0$.  In the case when $A=\ps{BP}{2n+1}$,
we use the map $B^{2}_{*}(\ps{MU_{(p)}}{2n+1})\to B^{2}_{*}$.  Since this
map sends $x^{MU}_{0,0}$ to $x_{0,0}$, it sends $x^{MU}_{0,j}$ to
$x_{0,j}$, and it sends $\gamma_{p^{k}}(\sigma x^{MU}_{0,j})$ to
$\gamma_{p^{k}}(\sigma x_{0,j})$.  Thus, it follows again that
\[
(\gamma_{p^{k}}(\sigma x_{0,j}))^{p}=\gamma_{p^{k}}(\sigma x_{0,j+1}).
\]
and that in degrees $\leq 2n+3$, $B^{2}_{*}$ is the polynomial algebra
on generators $z_{0,k}=\gamma_{p^{k}}(\sigma x_{i,0})$ for $k\geq 0$.

We compute $B^{3}_{*}$ just as $B^{1}_{*}$ and see that in degrees
$\leq 2n+4$, it is an exterior algebra on odd degree generators.  The
$E^{2}$ page for computing $B^{4}_{*}$ is concentrated in even degrees
for degrees $\leq 2n+5$, and therefore $B^{4}_{*}$ is as well.  Hence
$\QH[4]_{*}(A;\bF_{p})$ is concentrated in even degrees for
degrees $\leq 2n+1$. It follows that $\QH[4]_{*}(A;\bZ_{(p)})$ is
concentrated in even degrees for degrees $\leq 2n+1$ and is torsion-free
in degrees $<2n+1$ and hence in degrees $\leq 2n+1$.
Finally, we conclude that $\QC[4]^{*}(A;\bZ_{(p)})$ is concentrated in
even degrees and torsion free for degrees $\leq 2n+1$.  (In fact, we
see it is also torsion free in degree $2n+2$.)
This completes the proof of Lemma~\ref{lemmain} and therefore also the
inductive step.

\section{Proof of Theorem~\ref{thmbardl}}\label{secdlss}

This section proves Theorem~\ref{thmbardl}, which explains how the
Dyer-Lashof operations fit into the bar construction spectral
sequence of Theorem~\ref{thmbarssE1}.  The $(j+1)$-times iterated bar
construction on an 
$E_{n}$ algebra is the geometric realization of a simplicial 
(partial) $E_{n-j-1}$ algebra; the spectral sequence with coefficients in
$H\bF_{p}$ arises from the simplicial filtration on the geometric
realization of a simplicial (partial) $E_{n-j-1}$ $H\bF_{p}$-algebra.
Theorem~\ref{thmbardl} is a special case of the
following theorem, which fit the Dyer-Lashof operations into general
spectral sequences of this type.

\begin{thm}\label{thmsimpdl}
Let $A\subdot$ be a proper simplicial (partial) $E_{n}$
$H\bF_{p}$-algebra, and let $A=|A\subdot|$ be its geometric
realization.  The spectral sequence 
\[
E^{1}_{s,t}=\pi_{t}A_{s}
\]
converging strongly to $\pi_{s+t}A$ admits operations
\begin{align*}
&\beta^{\epsilon}Q^{i}\colon E^{r}_{s,t}\to E^{r}_{s,t+2i(p-1)-\epsilon}&\epsilon =0,1\qquad&p>2\\
&Q^{i}\colon E^{r}_{s,t}\to E^{r}_{s,t+i}&&p=2
\intertext{
for $r\geq 1$ and 
}
&2i<t+n-1 &&p>2\\
&i<t+n-1 &&p=2.
\end{align*}
If the $(n-1)$-shifted Lie bracket is zero on each
$\pi_{*}A_{s}$, then there are also ``top'' operations $Q^{(n-1+t)/2}$ and $\beta
Q^{(n-1+t)/2}$ for $p>2$ ($n-1+t$ even) and $Q^{n-1+t}$ for $p=2$ (all
$t$) defined on $E^{r}_{s,t}$.  These operations
satisfy the following properties:
\begin{enumerate}
\item On $E^{1}_{s,*}$, the operation $\beta^{\epsilon}Q^{i}$ is the
Dyer-Lashof operation $\beta^{\epsilon}Q^{i}$  
on $\pi_{*}A_{s}$ or in the case of the top
operations, $\xi_{n-1}$ (for $Q^{(n-1+t)/2}$, $p>2$  or $Q^{n-1+t}$,
$p=2$) or $\zeta_{n-1}$ (for $\beta Q^{(n-1+t)/2}$).
\item $\beta^{\epsilon} Q^{i}$ (anti)commutes with the differential
$d_{r}\colon E^{r}_{s,t}\to 
E^{r}_{s-r,t+r-1}$ by the formula
$\beta^{\epsilon}Q^{i}d_{r}=(-1)^{\epsilon}d_{r}\beta^{\epsilon}Q^{i}$. 
\item If $x\in E^{1}_{s,t}$ is a permanent cycle and 
$\beta^{\epsilon}Q^{i}$ is defined on $E^{1}_{s,t}$, then 
the permanent cycle $\beta^{\epsilon}Q^{i}x$ represents
$\beta^{\epsilon}Q^{i}x$ in $\pi_{*} A$, or when $s=0$ and
$\beta^{\epsilon}Q^{i}$ is a top operation, $\xi_{n-1}$ or
$\zeta_{n-1}$.  If $s+t>2i-\epsilon$ for $p>2$ 
or $s+t>i$ for $p=2$, then $\beta^{\epsilon}
Q^{i}x$ is hit by a differential.
\end{enumerate}
\end{thm}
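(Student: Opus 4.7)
The plan is to construct the operations on each page inductively, combining the Dyer--Lashof structure already present on each $\pi_*A_s$ with an extended-power filtration argument of the kind employed for the Adams spectral sequence in \cite[Ch.~VI]{hinfty}.  First, realize the spectral sequence as the homotopy spectral sequence of the skeletal filtration $F_s|A\subdot|$, so that $E^1_{s,t}$ identifies with $\pi_t$ of the normalized $s$-simplices.  Since each $A_s$ is an $E_n$ $H\bF_p$-algebra, $\pi_*A_s$ carries Dyer--Lashof operations in the full $E_n$-range, and I would define $\beta^\epsilon Q^i$ on $E^1$ to be the corresponding operation on $\pi_*A_s$ passed to the normalized quotient.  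Because each face and degeneracy map is a map of $E_n$-algebras, the operations (anti)commute with every face and hence with $d_1=\sum(-1)^i d_i$, which gives property~(ii) at $r=1$, descends the operations to $E^2$, and yields~(i).  The Lie-bracket hypothesis enters only for the top operations: by \cite[III.3.3.(2)]{hinfty} the top class of an $E_n$-algebra restricts to the $E_{n-1}$-algebra operations $\xi_{n-1}$ or $\zeta_{n-1}$, and this is face-compatible precisely when the $(n-1)$-shifted Lie bracket on each $\pi_*A_s$ vanishes.

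For $r\ge 2$ I would propagate the operations by the standard extended-power construction.  Represent $x\in E^r_{s,t}$ by $\tilde x\colon S^t\to F_s|A\subdot|$ whose boundary lands in $F_{s-r}$, and form
\[
D_p\tilde x\colon D_p S^t \to D_p F_s|A\subdot| \to |A\subdot|
\]
using the operadic action.  The key technical input is a filtration lemma: the skeletal filtration on $|A\subdot|$ lifts to a compatible filtration on $D_p|A\subdot|$ under which $D_p F_s$ sits in filtration $ps$ and the action map respects filtrations.  Pairing this with the Araki--Kudo splitting of $H_*D_pS^t$ assigns to the internal summand of degree $t+2i(p-1)-\epsilon$ a well-defined class in $\pi_*F_s|A\subdot|$; extracting this class defines $\beta^\epsilon Q^i x$ in $E^r$.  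Naturality of $D_p$ with respect to the inclusion of pairs $(F_s,F_{s-r})\subset(|A\subdot|,\ast)$ yields the Leibniz relation $d_r(\beta^\epsilon Q^i x)=(-1)^\epsilon \beta^\epsilon Q^i(d_r x)$, giving~(ii) in general.

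Property~(iii) then follows immediately: if $x$ is a permanent cycle, $\tilde x$ lifts to a class in $\pi_*|A\subdot|$ and $D_p\tilde x$ computes the actual Dyer--Lashof operation there.  Moreover, the instability relation $\beta^\epsilon Q^i y = 0$ for $2i-\epsilon < \deg y$ (or $i < \deg y$ for $p=2$) from \cite[III.3.1]{hinfty} forces the permanent cycle $\beta^\epsilon Q^i x$, which has total degree $s+t+2i(p-1)-\epsilon$ and represents $\beta^\epsilon Q^i$ of a class of total degree $s+t$, to represent zero in $\pi_*|A\subdot|$ whenever $s+t>2i-\epsilon$ (respectively $s+t>i$), hence to be hit by a differential.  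The principal obstacle will be establishing the filtration lemma for $D_p$ of a proper simplicial $E_n$-algebra and matching the weight grading on $H_*D_pS^t$ with the spectral sequence filtration; the Lie-bracket hypothesis intervenes exactly at the top, where it controls the face-compatibility of the operations corresponding to $\xi_{n-1}$ and $\zeta_{n-1}$.
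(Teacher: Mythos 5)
The first part of your argument --- identifying the operations on $E^1_{s,*}=\pi_{*}A_{s}$ with the levelwise Dyer--Lashof operations, observing they commute up to sign with $d_{1}$ because each face map is an $E_{n}$-map, and invoking the Lie-bracket hypothesis so that the top operations are additive and hence also commute with $d_{1}$ --- matches the paper. (One small correction: the role of the Lie-bracket hypothesis is additivity of $\xi_{n-1}$ and $\zeta_{n-1}$, not ``face-compatibility''; face maps always preserve the operations, but $d_{1}$ is a sum of face maps, so additivity is what is needed.)

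The gap is in the propagation to $r\geq 2$. The operations asserted in the theorem \emph{preserve} the filtration index $s$, because they are levelwise on the simplicial object. Your proposed filtration lemma, under which $D_{p}F_{s}$ sits in filtration $ps$, is the right statement for the product filtration on $|A\subdot|^{(p)}$, but if you define $\beta^{\epsilon}Q^{i}x$ by extracting a weight summand of $D_{p}\tilde x$ in that filtration, the class you obtain lives in filtration $ps$, not $s$. In the Adams spectral sequence setting of \cite[Ch.~IV]{hinfty} this filtration jump is exactly the subtle phenomenon that must be analyzed; here the correct and simpler picture is that the operad acts \emph{simplicially}, so the relevant extended power is the levelwise one, $\LC(p)_{+}\sma_{\Sigma_{p}}A\subdot^{(p)}\to A\subdot$, which is a map of simplicial objects and tautologically preserves the realization filtration. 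You would need to explain how to pass from the ``realization-level'' extended power $D_{p}|A\subdot|$ back to the levelwise one, and your proposal does not do this; indeed you flag ``matching the weight grading on $H_{*}D_{p}S^{t}$ with the spectral sequence filtration'' as the principal obstacle and leave it open.

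The paper's proof sidesteps this by a different device. Given a class $x$ surviving to $E^{r}_{s,t}$, it uses a Reedy-fibrancy argument (Lemmas~\ref{lemreedy} and~\ref{lemreedyrel}) to construct a small simplicial $H\bF_{p}$-module $X\subdot$ --- built from a sphere $S_{c}^{t}$ in level $s$, cones $CS_{c}^{t-q+s-1}$ in levels $s-r<q<s$, and a sphere $S_{c}^{t+r-1}$ in level $s-r$ --- together with a simplicial map $X\subdot\to A\subdot$ encoding the representative of $x$ and of $d_{r}x$. The levelwise extended power $\LC(p)_{+}\sma_{\Sigma_{p}}X\subdot^{(p)}\to A\subdot$ is then a map of simplicial $H\bF_{p}$-modules, so the filtration question disappears. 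After comparing with $S^{\infty}_{+}\sma_{C_{p}}X\subdot^{(p)}$, the paper carries out an explicit computation with May's chain-level operations \cite{gensteen} on the normalized total cellular chain complex; this is what actually produces the precise sign $(-1)^{\epsilon}$ in part~(ii) and the identification of representatives in part~(iii). Your proposal has no mechanism for pinning down these signs --- ``naturality of $D_{p}$'' does not by itself give the $(-1)^{\epsilon}$ in $\beta^{\epsilon}Q^{i}d_{r}=(-1)^{\epsilon}d_{r}\beta^{\epsilon}Q^{i}$. So while your opening moves are sound, the core of the argument for $r\geq 2$ needs to be replaced by the explicit construction of $X\subdot$ (or some equivalent device giving levelwise control) and a chain-level sign computation.
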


In the statement ``proper'' means that the union of the degeneracies
is a cofibration of some sort and just ensures that the geometric
realization has the correct homotopy type; if $A\subdot$ is
not proper, the spectral sequence actually converges to the homotopy
groups of the thickened realization.  We can always
arrange for $A\subdot$ to be proper, replacing it with a levelwise
equivalent object if necessary.

For the proof of Theorem~\ref{thmsimpdl}, it is convenient to have
$A\subdot$ be ``Reedy fibrant'', meaning that at each level $s$,
the map from $A_{s}$ to the limit over the face
maps of $A_{i}$, $i<s$ is a fibration (more on this in the proof of
Lemma~\ref{lemreedy} below).  We can 
arrange this without loss of generality by replacing $A\subdot$ by a
weakly equivalent simplicial $E_{n}$ $H\bF_{p}$-algebra if necessary.
We now assume this and begin the proof of Theorem~\ref{thmsimpdl}.

We are taking the sign convention that on $E^{1}_{s,t}$, the $d_{1}$
differential is 
\[
d_{1}=(-1)^{t}\sum_{i=0}^{s}(-1)^{i}\del_{i},
\]
where $\del_{i}$ denotes the $i$-th face map in the simplicial
structure.  Since each face map preserves the $E_{n}$
$H\bF_{p}$-algebra Dyer-Lashof operations and these operations (except 
the top operations) are additive, it follows that the $d_{1}$
differential commutes up to sign with these operations (except the top
operations).  Under the hypothesis that the Lie bracket is zero, the
top operations are also additive and the $d_{1}$ differential commutes
up to sign 
with these as well. It follows that the operations are well-defined
on the $E^{2}$ page.  Since part~(i) of the statement holds by
definition and inductively part~(ii) of the statement for the $d_{r}$
differential shows that the operations are well-defined on the
$E^{r+1}$ page, it suffices to verify parts~(ii) and~(iii).

Each element in $E^{r}_{s,t}$ for $r\geq 2$ is represented by an
element of $E^{1}_{s,t}$ with the property that the face maps 
$\del_{1}$,\dots,$\del_{s}$ are zero -- this subset is the \term{normalized}
$E^{1}$ page.  Now given such an element $x$ that survives to
$E^{r}_{s,t}$, we can represent $x$ as a map of
$H\bF_{p}$-modules
\[
\bar x_{s}\colon S_{c}^{t}\to A_{s},
\]
where $S_{c}^{t}$ denotes a cofibrant $t$-sphere in $H\bF_{p}$-modules,
$S_{c}^{t}\simeq \Sigma^{t}H\bF_{p}$.  Moreover, since we have assumed
that $A\subdot$ is Reedy fibrant, we can choose $\bar x_{s}$ such that for
$1\leq i\leq t$, $\del_{i}\bar x_{s}$ is the (point-set) trivial map
$S_{c}^{t}\to A_{s-1}$; we prove the following lemma at the end of the 
section. 

\begin{lem}\label{lemreedy}
Let $x\in \pi_{t}A_{s}$ such that 
$\del_{i}x=0$ in $\pi_{t}A_{s-1}$ for all $1\leq i\leq s$.  Then $x$ can be represented by a
map $\bar x \colon S_{c}^{t}\to A_{s}$ such that $\del_{i}\circ \bar
x=*$ for all $1\leq i\leq s$.
\end{lem}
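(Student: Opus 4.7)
The strategy is to use Reedy fibrancy of $A\subdot$ to upgrade the homotopical vanishing $[\partial_i y]=0$ into strict point-set vanishing $\partial_i \bar x = *$. I would proceed by induction on $s$, with the base case $s = 0$ vacuous. For the inductive step, fix any representative $y\colon S_c^t \to A_s$ of $x$ and carry out a nested sub-induction, killing the faces $\partial_s, \partial_{s-1}, \dots, \partial_1$ one at a time: after sub-step $k$, I would have a representative $y_k$ with $\partial_i y_k = *$ on the nose for all $i > s-k$.

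To pass from $y_k$ to $y_{k+1}$, the plan is to build a homotopy $H\colon S_c^t\wedge I_+ \to A_s$ starting at $y_k$ by first assembling its face projections as a compatible path in the matching object $M_s A$. Specifically, for $i > s-k$ one takes $\partial_i H$ to be the constant path at $*$; for $i = s-k$ one takes a null-homotopy of $\partial_{s-k} y_k$, which exists since $[\partial_{s-k} y_k] = \partial_{s-k} x = 0$; for $i < s-k$ one takes a homotopy starting at $\partial_i y_k$ chosen so that the tuple $(\partial_i H(\tau))_{i=0}^s$ satisfies the matching identities $\partial_a h_b = \partial_{b-1} h_a$ for all $a<b$ at every time $\tau$. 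This tuple is precisely a path $\gamma\colon I \to M_s A$ starting at the matching image $\mu(y_k)$. Reedy fibrancy asserts that $\mu\colon A_s \to M_s A$ is a fibration, and the homotopy lifting property lifts $\gamma$ to a path $H$ in $A_s$ with $H(0) = y_k$; one sets $y_{k+1} = H(1)$. By construction, $\partial_i y_{k+1} = \partial_i H(1)$ is $*$ on the nose for every $i \geq s-k$.

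The main obstacle is constructing the compatible path $\gamma$ in $M_s A$: writing out the identity $\partial_a \partial_b H = \partial_{b-1}\partial_a H$ at each time $\tau$ forces the null-homotopy of $\partial_{s-k} y_k$ to have various point-set-trivial face components, and similarly constrains the homotopies $\partial_i H$ for $i < s-k$. These constraints say, roughly, that the relevant null-homotopy of $\partial_{s-k} y_k$ in $A_{s-1}$ must itself lie in a prescribed normalized subobject --- which is exactly the content of the lemma one dimension lower. This is where the outer induction on $s$ is consumed, together with Reedy fibrancy of $A\subdot$ in dimension $s-1$, which upgrades the lower-dimensional matching compatibilities to on-the-nose normalizations. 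Iterating the sub-induction through $k = 1, 2, \dots, s$ produces $\bar x = y_s$ with $\partial_i \bar x = *$ on the nose for every $1 \leq i \leq s$.
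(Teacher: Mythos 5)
Your overall strategy — induct over the faces, build a compatible path in a matching object, and lift along the Reedy-fibration $A_s \to M_s$ — is broadly the paper's strategy, but there is a genuine gap at the step where you claim the compatible path $\gamma$ can be assembled. Concretely, for the tuple $(\del_i H(\tau))$ to define a path in the matching object, the null-homotopy of $\del_{s-k}y_k$ must have certain face components vanish \emph{on the nose}, and you assert this is "exactly the content of the lemma one dimension lower." But Lemma~\ref{lemreedy} is a statement about choosing a representative of a \emph{homotopy class} with trivial faces, not about choosing a \emph{null-homotopy} with trivial faces; what you need is a relative statement like Lemma~\ref{lemreedyrel}. More importantly, even that relative statement cannot be deduced from Reedy fibrancy plus an outer induction on $s$. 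Different choices of null-homotopy of $\del_{s-k}y_k$ differ by elements of $\pi_{t+1}A_{s-1}$, and their restrictions to the appropriate partial matching object $M^{i-1}_{s-1}$ sweep out only a coset of the image of $\pi_{t+1}A_{s-1} \to \pi_{t+1}M^{i-1}_{s-1}$. There is an honest obstruction class $b \in \pi_{t+1}M^{i-1}_{s-1}$ that must be killed before any lift exists, and the only reason it can be killed is that the map $\pi_*A_{s-1}\to \pi_* M^{i-1}_{s-1}$ is \emph{surjective}.

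The paper isolates exactly this input in the Proposition preceding the two lemmas: for $i<s+1$, the partial matching maps $A_s\to M^i_s$ are both fibrations and surjections on homotopy groups, with the surjectivity ultimately coming from the Kan/Moore property of the simplicial abelian group $\pi_q A\subdot$ — a fact about simplicial abelian groups that is genuinely extra beyond Reedy fibrancy. Your proposal uses only the fibration half of that Proposition. To repair the argument you would either need to reprove and invoke this surjectivity (at which point you would essentially have the paper's proof: pick any null-homotopy $a$, observe its projection to $M^{i-1}_{s-1}$ factors through $\Sigma S_c^t$, use surjectivity to alter $a$ by a map $\Sigma S_c^t \to A_{s-1}$ so that this projection becomes null, then lift along the fibration $A_{s-1}\to M^{i-1}_{s-1}$), or find some other source of this surjectivity, which your outer induction on $s$ does not supply.
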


We also use a relative version of this result, also proved below.

\begin{lem}\label{lemreedyrel}
Let $\bar x \colon S_{c}^{t}\to A_{s}$ be a map such that
$\del_{i}\circ \bar x=*$ for all $1\leq i\leq s$.  If $\bar x$ is
null-homotopic in $A_{s}$, then there exists a null-homotopy $\bar
y\colon CS_{c}^{t}\to A_{s}$ such that $\del_{i}\circ \bar y=*$ for
all $1\leq i\leq s$. 
\end{lem}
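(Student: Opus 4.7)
The plan is to start from any given null-homotopy $\bar h\colon CS_{c}^{t}\to A_{s}$ of $\bar x$ and successively modify it to kill the face maps $\del_{1},\dots,\del_{s}$ on the nose, by subtracting off appropriate degenerate terms.  This is the spectrum-level analogue of the classical Moore/Dold--Kan normalization trick, made possible here by the $H\bF_{p}$-module structure on $A_{s}$ (so that maps into $A_{s}$ can be added and subtracted) and by the fact that each degeneracy $s_{j}\colon A_{s-1}\to A_{s}$ is a morphism of $H\bF_{p}$-modules.

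Concretely, I will set $\bar h_{s+1}=\bar h$ and recursively define
\[
\bar h_{i}=\bar h_{i+1}-s_{i-1}\circ\del_{i}\circ\bar h_{i+1}
\]
for $i=s,s-1,\dots,1$, and then take $\bar y=\bar h_{1}$.  The main claim, proved by descending induction on $i$, is that each $\bar h_{i}$ remains a null-homotopy of $\bar x$ and additionally satisfies $\del_{k}\circ\bar h_{i}=*$ for every $i\le k\le s$.  The boundary condition $\bar h_{i}|_{S_{c}^{t}}=\bar x$ is preserved because the correction term vanishes on $S_{c}^{t}$: the hypothesis $\del_{i}\bar x=*$ forces $s_{i-1}\del_{i}\bar x=*$.

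For the face-map conditions, the identity $\del_{i}\circ s_{i-1}=\mathrm{id}$ gives immediately $\del_{i}\bar h_{i}=\del_{i}\bar h_{i+1}-\del_{i}\bar h_{i+1}=*$.  For $k>i$, the simplicial identities $\del_{k}\circ s_{i-1}=s_{i-1}\circ\del_{k-1}$ and $\del_{k-1}\circ\del_{i}=\del_{i}\circ\del_{k}$ (both valid since $i<k$) combine to give
\[
\del_{k}\circ s_{i-1}\circ\del_{i}\circ\bar h_{i+1}=s_{i-1}\circ\del_{i}\circ\del_{k}\circ\bar h_{i+1}=*,
\]
where the final equality uses the inductive hypothesis $\del_{k}\bar h_{i+1}=*$.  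Hence $\del_{k}\bar h_{i}=\del_{k}\bar h_{i+1}-*=*$, as required.

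The step I expect to be the main obstacle is fixing the order of the corrections correctly: processing $\del_{s},\del_{s-1},\dots,\del_{1}$ in descending order is what guarantees that each new correction cannot disturb the face-map values already killed, since the relevant simplicial identities route the effect of $s_{i-1}\del_{i}$ on $\del_{k}$ (for $k>i$) back through $\del_{k}$ itself.  No fibrancy hypothesis on $A\subdot$ is required beyond what is built into the simplicial structure and the underlying $H\bF_{p}$-module category.
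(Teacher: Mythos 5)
Your approach rests on forming the difference $\bar h_{i+1}-s_{i-1}\circ\del_{i}\circ\bar h_{i+1}$ as an actual map of $H\bF_{p}$-modules and then applying face maps to it on the nose. That is the gap: in the ambient category here (EKMM $S$-modules, hence $H\bF_{p}$-modules as module spectra over $H\bF_{p}$), there is no strict abelian-group structure on the hom sets $\mathrm{Hom}(CS_{c}^{t},A_{s})$. Sums and differences of maps are only defined up to homotopy --- via a pinch/co-H construction on $CS_{c}^{t}$ --- so what you produce is a map satisfying $\del_{i}\circ\bar h_{i}\simeq *$, not $\del_{i}\circ\bar h_{i}=*$. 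But the lemma requires the point-set equality: it is exactly what lets the proof of Theorem~\ref{thmsimpdl} assemble the $\bar x_{q}$ and $\bar y$ into a genuine simplicial $H\bF_{p}$-module $X\subdot$ mapping to $A\subdot$. A homotopy-level normalization does not produce such an $X\subdot$. (Your algebraic manipulations with the simplicial identities are correct and would give a proof in a context with strict additive hom sets, such as simplicial abelian groups or chain complexes, but that is a genuinely stronger structure than what is present.)

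Your closing remark that no fibrancy hypothesis is needed is also where your proposal diverges from what is actually required. The paper's proof works by a quite different mechanism: it uses the partial matching objects $M^{i}_{s}$ and the standing Reedy fibrancy hypothesis, which makes each $A_{s}\to M^{i}_{s}$ a fibration surjective on homotopy groups. One first corrects the initial null-homotopy by an element of $\pi_{t+1}A_{s}$ so that its composite to $M^{s}_{s}$ is null-homotopic, and then inductively uses homotopy lifting along the fibrations $A_{s}\to M^{i}_{s}$ to drag the null-homotopy to one whose composite to $M^{i}_{s}$ is trivial on the nose, one $i$ at a time. Reedy fibrancy is what converts a homotopy-theoretic vanishing into a point-set vanishing; without it (or some equally strong input) there is no reason the strict conditions $\del_{i}\circ\bar y=*$ can be achieved.
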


The map $\del_{0}\circ \bar x_{s}\colon S_{c}^{t}\to A_{s-1}$ represents
$\del_{0}x$, which is the $d_{1}$ differential of $x$ up to sign.  Since $x$ is
in $E^{r}_{s,t}$, $r\geq 2$, we must have that $\del_{0}\circ \bar x_{s}$ is
null-homotopic.  Applying the relative version of the lemma, we can
choose a null-homotopy
\[
\bar x_{s-1}\colon CS_{c}^{t}\to A_{s-1}
\]
such that $\del_{i}\circ \bar x_{s-1}=*$ for $1\leq i\leq s-1$.  Since
$\del_{0}\del_{0}=\del_{0}\del_{1}$, the restriction of $\del_{0}\circ
\bar x_{s-1}$ to $S_{c}^{t}\subset CS_{c}^{t}$ is the trivial map, and we can
consider $\del_{0}\circ \bar x_{s-1}$ as a map from $S_{c}^{t+1}$ to
$A_{s-2}$.  This map represents the $d_{2}$ differential of $x$ up to
sign.
If $r>2$, we have that this map is null-homotopic and we can choose
\[
\bar x_{s-2}\colon CS_{c}^{t+1}\to A_{s-2}.
\]
We continue in this way to construct $\bar x_{s}$, \dots, $\bar
x_{m}$, where $m=\max(s-r+1,0)$.  In the case when $r\geq s+1$, $x$ is
a permanent cycle; in the other case, $r\leq s$, we write $\bar y$ for
the map $\del_{0}\circ \bar x_{m}$ viewed as a map from $S_{c}^{t+r-1}$ to
$A_{s-r}$, representing $d_{r}x$ up to sign.

The previous paragraph implicitly constructs a simplicial
$H\bF_{p}$-module $X\subdot$ and a map of simplicial $H\bF_{p}$-modules $X\subdot \to
A\subdot$, where the non-degenerate part of $X_{q}$ is:
\begin{itemize}
\item The trivial $H\bF_{p}$-module $*$ if $q<s-r$ or $q>s$,
\item The $H\bF_{p}$-module $S_{c}^{t+r-1}$ if $q=s-r\geq 0$,
\item The $H\bF_{p}$-module $CS_{c}^{t-q+s-1}$ if $q>s-r$ and $q<s$, or
\item The $H\bF_{p}$-module $S_{c}^{t}$ if $q=s$.
\end{itemize}
On each of these, the maps $\del_{i}$ for $i>0$ are trivial, and the
map $\del_{0}$ is the composite
\[
CS_{c}^{t-q+s-1}\to S_{c}^{t-q+s}\subset  CS_{c}^{t-q+s}=CS_{c}^{t-(q-1)+s-1}
\]
for $s-r+1<q<s$, the inclusion $S_{c}^{t}\to CS_{c}^{t}$ for $q=s$ and
the map $CS_{c}^{t+r-2}\to S_{c}^{t+r-1}$ for $q=s-r+1>0$. 
The maps $\bar x_{q}$ and (when it is defined) $\bar y$ define a map
of simplicial $H\bF_{p}$-modules $\bar x\colon X\subdot \to A\subdot$.

We now split up the case $r\leq s$ (to prove part~(ii)) from the
permanent cycle case $r\geq s+1$ (to prove part~(iii)).
In the case $r\leq s$, the spectral sequence for computing the
homotopy groups of $X\subdot$ becomes zero at $E^{r+1}$, and so the
geometric realization of $X\subdot$ is contractible.  
Since $A\subdot$ is a simplicial $E_{n}$ $H\bF_{p}$-algebra, we get 
a map of simplicial $H\bF_{p}$-modules
\[
\LC(p)_{+} \sma_{\Sigma_{p}}X\subdot^{(p)}\to A\subdot
\]
and we look at the associated spectral sequence.  
Let $a$ denote the fundamental 
class of $S_{c}^{t}$ in $\pi_{t}X_{s}$, and let $b$ denote the
fundamental class of $S_{c}^{t+r-1}$ in $\pi_{t+r-1}X_{s-r}$.  Since
$a$ maps to $x$ in $\pi_{s}A_{t}$ and $b$ maps to $d_{r}x$ up to 
sign, it suffices to show that in the spectral sequence for $\LC(p)_{+}
\sma_{\Sigma_{p}}X\subdot^{(p)}$, the $d_{r}$ differential takes an
operation on $a$ to the corresponding operation on $b$ up to the
appropriate sign as in part~(ii)
of the statement of the theorem.  For this we use the comparison maps
\[
S^{\infty}_{+}\sma_{C_{p}}X\subdot^{(p)}\to 
\LC[\infty](p)_{+}\sma_{C_{p}}X\subdot^{(p)}\to
\LC[\infty](p)_{+}\sma_{\Sigma_{p}}X\subdot^{(p)}
\from \LC(p)_{+} \sma_{\Sigma_{p}}X\subdot^{(p)}
\]
where $C_{p}$ denotes the cyclic group of order $p$.  In the degree range we
are looking at for the operations, the map 
\[
\LC(p)_{+} \sma_{\Sigma_{p}}(S_{c}^{t+r-1})^{(p)}
\to \LC[\infty](p)_{+} \sma_{\Sigma_{p}}(S_{c}^{t+r-1})^{(p)}
\]
is an isomorphism or nearly an isomorphism depending on $n$;
see~\cite[III.5.2--3]{homiter}.  In the degree of the top operations
$\xi_{n-1}$ and $\zeta_{n-1}$, it is an isomorphism, and otherwise, by
working with $n-1$ in place of $n$ if necessary, we can make it an
isomorphism.  This reduces the problem to the study of the spectral
sequence for $\LC[\infty](p)_{+} \sma_{\Sigma_{p}}X\subdot^{(p)}$ and from
there to the study of the spectral sequence for
$S^{\infty}_{+}\sma_{C_{p}}X\subdot^{(p)}$.  The advantage of the
latter spectral sequence is that
$S^{\infty}_{+}\sma_{C_{p}}X\subdot^{(p)}$ is canonically a simplicial
object in the category of CW $H\bF_{p}$-modules, and we can compute the
spectral sequence by looking at the cellular chain complex.  The
remainder of the proof of part~(ii) is an easy argument with the
chain-level operations of \cite{gensteen}, as follows.  

We work with the normalized total complexes of $C_{*}(X\subdot)$
and $C_{*}(S^{\infty}_{+}\sma_{C_{p}}X\subdot^{(p)})$:
An element of $C_{*}(X\subdot)$ is an
element of $C_{t}(X_{s})$ for which the face maps $\del_{i}$ are zero
for $i\geq 1$, with total differential $D=(-1)^{t}\del +d$, where
$\del=\del_{0}$ is the simplicial differential and $d$ is the internal
differential in $C_{*}$.  We then have an isomorphism of spectral
sequences between the spectral sequence of the double complex and
the spectral sequence for computing the homotopy groups of the
simplicial $H\bF_{p}$-modules $X\subdot$ and
$S^{\infty}_{+}\sma_{C_{p}}X\subdot^{(p)}$, consistent with one
commonly used set of sign conventions (compatible with the convention
specified above for $d_{1}$).  Let $a_{s}$ be
the generator of $C_{t}(X_{s})$ corresponding to the cell $S_{c}^{t}$
and $a_{s-q}$ for the generator of $C_{t+q}(X_{s-q})$
corresponding to the cell $CS_{c}^{t+q-1}$.  Under the usual sign
conventions for the cellular chains of
$CS_{c}^{t+q-1}=S_{c}^{t+q-1}\sma I$ (where $I$ has basepoint $1$), we
have that $da_{s-q}$ is $(-1)^{t+q}$ times the class representing the
bottom cell $S_{c}^{t+q-1}$.  This gives us the formula
\[
\del a_{s-q+1}=(-1)^{t+q}da_{s-q}.
\]
Taking
\[
\bar a=a_{s}+a_{s-1}+a_{s-2}+ \dotsb +a_{s-r+1},
\]
we have that
\[
D\bar a=(-1)^{t+r-1}\del a_{s-r+1}
\]
is $(-1)^{t+r-1}$ times the generator of $C_{s-r}(X_{s-r})$
corresponding to the cell $S_{c}^{t+r-1}$.  Since $\bar a_{s}$
represents $a$ and $\del a_{s-r+1}$ represents $b$ in the $E^{1}$ page
of the spectral sequence for $\pi_{*}|X\subdot|$, this tells us that with
these sign conventions, $d_{r}a=(-1)^{t+r-1}b$ in $E^{r}$.
The chain-level operation
$\beta^{\epsilon} Q^{i}$ commutes with $\del$ and
$(-1)^{\epsilon}$-commutes with $d$, adding $\epsilon$ to the parity
of the internal degree; defining
$\beta^{\epsilon}Q^{i}\bar a$ to be the class 
\[
\beta^{\epsilon}Q^{i}a_{s}+\beta^{\epsilon}Q^{i}a_{s-1}+\dotsb
+\beta^{\epsilon}Q^{i}a_{s-r+1},
\]
in $C_{*}(S^{\infty}_{+}\sma_{C_{p}}X\subdot^{(p)})$, 
we get the formula
\[
D\beta^{\epsilon}Q^{i}\bar a
=(-1)^{t+r-1+2i(p-1)-\epsilon}\beta^{\epsilon}Q^{i}\del a_{s-r+1}
=(-1)^{t+r-1+\epsilon}\beta^{\epsilon}Q^{i}\del a_{s-r+1.}
\]
Since $\beta^{\epsilon}Q^{i}a_{s}$ represents $\beta^{\epsilon}Q^{i}a$
and $\beta^{\epsilon}Q^{i}\del a_{s-r+1}$
represents $\beta^{\epsilon}Q^{i}b$  
in the $E^{1}$ page of the spectral sequence for
$\pi_{*}|S^{\infty}_{+}\sma_{C_{p}}X\subdot^{(p)}|$, we see that 
\[
d_{r}(\beta^{\epsilon}Q^{i}a)
= (-1)^{t+r-1+\epsilon}\beta^{\epsilon}Q^{i}b
= (-1)^{\epsilon}\beta^{\epsilon}Q^{i}d_{r}a.
\]
This completes the proof of part~(ii).

In the permanent cycle case $r\geq s+1$, the geometric realization of
$X\subdot$ is homotopy equivalent to an $(s+t)$-sphere
$H\bF_{p}$-module and the cycle 
\[
\bar a=a_{s}+a_{s-1}+a_{s-2}+\dotsb+a_{0}
\]
in the normalized total cellular chain complex (in the notation above)
provides a fundamental class in $H_{s+t}|X\subdot|\iso \pi_{s+t}|X\subdot|$.
We have a map from $X\subdot$ to the constant
simplicial object $S_{c}^{s+t}$ induced by the collapse map
$CS_{c}^{t+s-1}\to S_{c}^{s+t}$ in simplicial degree $0$; looking at normalized total chain
complexes, we see that this map is a weak equivalence, sending $\bar a$
to the fundamental class $c$ of the cell $S_{c}^{s+t}$ in 
$C_{s+t}(S_{c}^{s+t})$. 
Looking at the maps of simplicial $H\bF_{p}$-modules,
\[
\xymatrix@R-1pc{%
\LC(p)_{+} \sma_{\Sigma_{p}}X\subdot^{(p)}\ar[r]\ar[d]_-{\simeq}&A\subdot\\
\LC(p)_{+} \sma_{\Sigma_{p}}(S_{c}^{s+t})^{(p)},
}
\]
to prove part~(iii), it suffices to show that the element in 
\[
\pi_{*}(\LC(p)_{+} \sma_{\Sigma_{p}}|X\subdot|^{(p)}) \iso
\pi_{*}|\LC(p)_{+} \sma_{\Sigma_{p}}X\subdot^{(p)}|
\]
represented by $\beta^{\epsilon}Q^{i}a$ in the $E^{\infty}$ page goes
to the element $\beta^{\epsilon}Q^{i}c$ of 
\[
\pi_{*}(\LC(p)_{+} \sma_{\Sigma_{p}}(S_{c}^{s+t})^{(p)})
\] 
given by $\beta^{\epsilon}Q^{i}$ applied to the
fundamental class of $S_{c}^{s+t}$.  (For the second
assertion of part~(iii), note that when $s+t>2i-\epsilon$ ($p>2$) 
or $s+t>i$ ($p=2$), this element is zero.)
Again we look at chain level operations on the
normalized total cellular chain complex of 
$S^{\infty}_{+}\sma_{C_{p}}X\subdot^{(p)}$.  Now we have that 
\[
\beta^{\epsilon}Q^{i}a_{s}+\beta^{\epsilon}Q^{i}a_{s-1}+\dotsb
+\beta^{\epsilon}Q^{i}a_{0}
\]
is a cycle, representing $\beta^{\epsilon}Q^{i}a$.
Since it maps to $\beta^{\epsilon}Q^{i}c$ in the
cellular chain complex of
$S^{\infty}_{+}\sma_{C_{p}}(S_{c}^{s+t})^{(p)}$, this completes the
proof of part~(iii) and the proof of
Theorem~\ref{thmsimpdl}, assuming Lemmas~\ref{lemreedy}
and~\ref{lemreedyrel}.
 
We now turn to the proof of Lemmas~\ref{lemreedy}
and~\ref{lemreedyrel}.  We begin by reviewing partial matching
objects~\cite[2.3]{DwyerKanStover}, 
which correspond to the limit of the last several faces.  Let
$M^{0}_{0}=M^{1}_{0}=*$; for $s>0$, let $M^{0}_{s}=*$, and for $1\leq
i\leq s+1$, let $M^{i}_{s}$ be the limit of the diagram $\aD^{i}_{s}$
which has objects
\begin{itemize}
\item For each $j$ with $s-i< j\leq s$, a copy of $A_{s-1}$ labeled
$(\del_{j},A_{s-1})$.

\item For each $(j,k)$ with $s-i<j<k\leq s$, a copy of $A_{s-2}$
labeled $(\del_{j}\del_{k}, A_{s-2})$. (We understand $A_{-1}=*$.)
\end{itemize}
and maps
\begin{itemize}
\item For each $(j,k)$ with $s-i<j<k\leq s$, a map $(\del_{k},A_{s-1})\to
(\del_{j}\del_{k},A_{s-2})$ given by the map $\del_{j}\colon A_{s-1}\to A_{s-2}$.

\item For each $(j,k)$ with $s-i<j<k\leq s$, a map $(\del_{j},A_{s-1})\to
(\del_{j}\del_{k},A_{s-2})$ given by the map $\del_{k-1}\colon A_{s-1}\to A_{s-2}$.
\end{itemize}
Inductively, for $s>0$, we can identify $M^{i+1}_{s}$ as the pullback
\[
M^{i+1}_{s}=M^{i}_{s}\times_{M^{i}_{s-1}}A_{s-1}.
\]
We have assumed that $A\subdot$ is Reedy fibrant, and this means that
for every $s$, the map $A_{s}\to M^{s+1}_{s}$ induced by
$\del_{0},\dotsc,\del_{s}$ is a fibration.  Using this, we prove the following proposition.

\begin{prop}
The maps $A_{s}\to M^{i}_{s}$ and $M^{i}_{s}\to M^{i-1}_{s}$ are
fibrations.  For $i<s+1$, they are surjections on homotopy groups.
\end{prop}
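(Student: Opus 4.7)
The plan is to prove both assertions by a double induction, outer on $s$ and inner (for the $A_s \to M^i_s$ surjectivity) on $i$, systematically exploiting the pullback decomposition $M^{i+1}_s = M^i_s \times_{M^i_{s-1}} A_{s-1}$. The base case $s=0$ is trivial because $M^0_0 = M^1_0 = *$, so I fix $s > 0$ and assume the proposition for all smaller values of $s$.

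The fibration claims are formal. Reedy fibrancy gives that $A_s \to M^{s+1}_s$ is a fibration. Each $M^{i+1}_s \to M^i_s$ is the pullback of $A_{s-1} \to M^i_{s-1}$, a fibration by the outer induction, so it too is a fibration. Composing down the chain $A_s \to M^{s+1}_s \to M^s_s \to \dotsb \to M^i_s$ then yields that every $A_s \to M^i_s$ is a fibration as well.

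For the surjectivity of $M^i_s \to M^{i-1}_s$ on $\pi_n$ in the range $i \leq s$, I would feed the pullback square into the Mayer--Vietoris long exact sequence. By the outer inductive hypothesis (applied at $s-1$, with $i-1 \leq s-1$), $A_{s-1} \to M^{i-1}_{s-1}$ is surjective on $\pi_n$ in every degree, and this forces the projection $\pi_n(M^i_s) \to \pi_n(M^{i-1}_s)$ to be surjective: any class in $\pi_n(M^{i-1}_s)$ has a matching preimage in $\pi_n(A_{s-1})$ over its image in $\pi_n(M^{i-1}_{s-1})$, giving a compatible class in the pullback.

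The main obstacle is the surjectivity of $A_s \to M^i_s$ for $i \leq s$, since the naive factorization through $M^{s+1}_s$ does not help ($A_s \to M^{s+1}_s$ is not claimed to be $\pi_n$-surjective). Here I would run the inner induction on $i$, combined with a degeneracy correction. Given $m' \in \pi_n(M^{i+1}_s)$ projecting to $m \in \pi_n(M^i_s)$ and $a \in \pi_n(A_{s-1})$, the inner inductive hypothesis produces a lift $\alpha_0 \in \pi_n(A_s)$ of $m$; the discrepancy $b := a - \del_{s-i}(\alpha_0)$ lies in the kernel of $A_{s-1} \to M^i_{s-1}$, that is, $\del_j b = 0$ for $s-i \leq j \leq s-1$. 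The key move is to set $\alpha_1 := s_{s-i-1}(b)$; this requires $s-i-1 \geq 0$, which is precisely the range $i+1 \leq s$ in which we claim surjectivity. The simplicial identities $\del_{s-i} s_{s-i-1} = \id$ and $\del_k s_{s-i-1} = s_{s-i-1}\del_{k-1}$ for $k \geq s-i+1$ give $\del_{s-i}\alpha_1 = b$ and $\del_k\alpha_1 = 0$ for $k \geq s-i+1$, so $\alpha_0 + \alpha_1$ is the desired lift of $m'$. The identification of elements of the pullback with compatible pairs in $\pi_n(M^i_s) \oplus \pi_n(A_{s-1})$ is justified by the Mayer--Vietoris analysis, and the connecting contribution vanishes because the outer hypothesis supplies $\pi_n$-surjectivity of $A_{s-1} \to M^i_{s-1}$ in all degrees, including $n+1$.
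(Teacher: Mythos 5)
Your proposal is correct, and it reaches the conclusion by a genuinely different route from the paper. The paper handles the surjectivity of $A_s\to M^i_s$ in one conceptual stroke: having established that the maps $M^j_s\to M^{j-1}_s$ are $\pi_q$-surjective, it observes that the pullback towers commute with $\pi_q$, so that $\pi_q M^s_s$ is identified with the corresponding partial matching object of the simplicial abelian group $\pi_q A_\ssdot$; then it invokes the fact that simplicial abelian groups satisfy the Kan condition to get $\pi_q A_s\to\pi_q M^s_s$ surjective, and finishes by composing with the surjections $M^s_s\to M^{s-1}_s\to\dotsb\to M^i_s$. Your inner induction on $i$ with the correction term $\alpha_1 = s_{s-i-1}(b)$ is, in effect, an inlined proof of exactly that Kan lifting property for simplicial abelian groups, run directly at the level of $\pi_n A_\ssdot$; the simplicial-identity bookkeeping ($\del_{s-i}s_{s-i-1}=\mathrm{id}$ and $\del_k s_{s-i-1}=s_{s-i-1}\del_{k-1}$ for $k\geq s-i+1$) is the standard Moore argument. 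You also correctly flag the one nontrivial point in passing to the $\pi_n$-level pullback description: the Mayer--Vietoris connecting map vanishes because the outer inductive hypothesis gives $\pi_{n+1}$-surjectivity of $A_{s-1}\to M^i_{s-1}$. The trade-off is that your version is more self-contained and elementary (no appeal to Moore's theorem), at the cost of being longer and carrying an extra layer of induction; the paper's version is shorter and identifies the conceptual reason (the Kan condition) at the price of leaving that black box uninlined.
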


\begin{proof}
We work by induction on $s$; the base case $s=0$ is clear.  Assuming
the statements for $s-1$, and using the pullback description of
$M^{i+1}_{s}$, we see that the maps $M^{i}_{s}\to M^{i-1}_{s}$ are
fibrations and for $i<s+1$ surjections on homotopy groups.  Starting
from the Reedy fibrant hypothesis that the map $A_{s}\to M^{s+1}_{s}$
is a fibration, downward induction shows that the maps $A_{s}\to
M^{i}_{s}$ are fibrations.  Using the fact that the maps $M^{i}_{s}\to
M^{i-1}_{s}$ are surjections on homotopy groups for $i<s+1$, we can identify
$\pi_{q}M^{s}_{s}$ as the partial matching object $M^{s}_{s}$ for
$\pi_{q}A\subdot$. The fact that simplicial abelian groups satisfy
the Kan condition implies that the map $\pi_{q}A_{s}\to
\pi_{q}M^{s}_{s}$ is a surjection.  Downward induction now implies
that the maps $\pi_{q}A_{s}\to \pi_{q}M^{i}_{s}$ are surjections for $i<s+1$.
\end{proof}

We can now prove Lemmas~\ref{lemreedy} and~\ref{lemreedyrel}.  For
Lemma~\ref{lemreedy}, we show inductively that we can choose $\bar
x_{i}\colon S_{c}^{t}\to A_{s}$ for $i=0,\dotsc,s$ representing $x$
and having the property that the composite to $M^{i}_{s}$ is the
trivial map; then $\bar x=\bar x_{s}$ satisfies $\del_{i}\bar x=*$ for
$1\leq i\leq s$.  Choose any $\bar x_{0}$ representing $x$; since
$M^{0}_{s}=*$, the composite to $M^{0}_{s}$ is trivial.  Assume by
induction that we have constructed $\bar x_{i-1}$.  Since
$\del_{s-i+1}x=0$, we have that $\del_{s-i+1}\circ \bar x_{i-1}$ is
null-homotopic, and we choose a null-homotopy $a\colon CS_{c}^{t}\to
A_{s-1}$.  Since $\del_{s-k+1}\circ \bar x_{i-1}=*$ for $k<i$, the
composite of $\bar x_{i-1}$ to $M^{i-1}_{s-1}$ is trivial, and so we
can regard the composite of $a$ to $M^{i-1}_{s-1}$ as a map $b$ from
$\Sigma S_{c}^{t}\to M^{i-1}_{s-1}$.  Since the map $A_{s-1}\to
M^{i-1}_{s-1}$ is surjective on homotopy groups, after altering $a$
by adding an appropriate map $\Sigma S_{c}^{t}\to A_{s-1}$ if
necessary, we can arrange for $b$ to be null-homotopic.  Choosing a
null-homotopy $h$ of $b$, and regarding it as a homotopy of $a$ rel
$S_{c}^{t}$, we can lift $h$ to a homotopy of $a$ in $A_{s-1}$
since the map $A_{s-1}\to M^{i-1}_{s-1}$ is a fibration.  Looking at
the other side of the homotopy, we get a map $c\colon
CS_{c}^{t}\to A_{s-1}$ which restricts to the composite of $\bar
x_{i-1}$ on $S_{c}^{t}$ and whose composite map $CS_{c}^{t}\to
M^{i-1}_{s-1}$ is the trivial map.  Using the fact that $M^{i}_{s}\iso
M^{i-1}_{s}\times_{M^{i-1}_{s-1}}A_{s-1}$, the map $c$ together
with the trivial map $CS_{c}^{t}\to M^{i-1}_{s}$ defines a map
$CS_{c}^{t}\to M^{i}_{s}$ which restricts to the composite of $\bar
x_{i-1}$ on $S_{c}^{t}$.  Regarding this as a homotopy from the
composite of $\bar x_{i-1}$ into $M^{i}_{s}$ to the trivial map
$S_{c}^{t}\to M^{i}_{s}$ and using the
fact that the map $A_{s}\to M^{i}_{s}$ is a fibration, we can lift
this to a homotopy $g\colon S_{c}^{t}\sma I_{+}\to A_{s}$ starting at
$\bar x_{i-1}$.  Let $\bar x_{i}\colon S_{c}^{t}\to A_{s}$ be the
other side of this homotopy; by construction, it represents $x$ and
its composite to 
$M^{i}_{s}$ is trivial.  This completes the inductive argument and the
proof of Lemma~\ref{lemreedy}.

The proof of Lemma~\ref{lemreedyrel} is similar: Given $\bar x$, we
argue inductively that we can choose maps $\bar y_{i}\colon
CS_{c}^{t}\to A_{s}$ for $i=0,\dotsc,s$ restricting to $\bar x$ on
$S_{c}^{t}$ and having the properties that the composite to
$M^{i}_{s}$ is the trivial map and that the composite to $M^{s}_{s}$,
viewed as a map from $\Sigma S_{c}^{t}$ to $M^{s}_{s}$ is
null-homotopic.  Choose any $\bar y_{0}$ null-homotopy of $\bar x$;
since $A_{s}\to M^{s}_{s}$ is surjective on $\pi_{t+1}$, after
altering $\bar y_{0}$ by adding a map $\Sigma S_{c}^{t}\to A_{s}$ if
necessary, we can arrange for the composite of $\bar y_{0}$ to
$M^{s}_{s}$ to be null-homotopic as required.  Inductively, having
chosen $\bar y_{i-1}$, we can regard the composite map to $A_{s-1}$ as
a map $z\colon \Sigma S_{c}^{t}\to A_{s-1}$, which is null-homotopic.
As above, we can choose a null-homotopy $a$ whose composite to
$M^{i-1}_{s}$ is the trivial map.  We get a homotopy $\Sigma S_{c}^{t}\sma
I_{+}\to M^{i}_{s}$ starting at the composite map of $\bar y_{i-1}$
and ending at the trivial map; we can choose a lift $g\colon
CS_{c}^{t}\sma I_{+}\to A_{s}$ rel $S_{c}^{t}$ starting at $\bar
y_{i-1}$.  Letting $\bar y_{i}$ be the other side of the homotopy, by
construction it restricts to $\bar x$ on $S_{c}^{t}$, its composite to
$M^{i}_{s}$ is trivial, and its composite to $M^{s}_{s}$ is
null-homotopic.  This completes the induction and the proof of
Lemma~\ref{lemreedyrel}.

\section{Proof of Theorem~\ref{thmbarssmult}.(ii)}\label{secsubdiv}

In this section we prove Theorem~\ref{thmbarssmult}.(ii), verifying
that the cohomological 
spectral sequence of Theorem~\ref{thmbarssE1} is multiplicative.  Following the notation from
Section~\ref{secmultss}, let $N$ be a cofibrant non-unital $E_{n}$
$H$-algebra with a weak equivalence $KN\to H\lsmaR A$, and let
$C=\tB^{n-1}N$, a (partial) non-unital $E_{1}$ $H$-algebra.  The
spectral sequences of Theorem~\ref{thmbarssE1} are induced by the
geometric realization filtration on the bar construction $BC$ and the
multiplication on cohomology is induced by the bar diagonal $BC\to
BC\sma_{H}BC$. We prove the following theorem.

\begin{thm}\label{thmdiagonal}
For a (partial) non-unital $E_{1}$ $H$-algebra $C$, the bar diagonal
$BC\to BC\sma_{H}BC$ preserves filtration, where on the left we use
the geometric realization filtration and on the right we use the
smash product of the geometric realization filtrations.  Moreover, the
induced map on the homotopy groups of the filtration quotients 
\[
\pi_{*}(\Sigma^{n}C^{(n)})\to
\pi_{*}(\bigvee_{j=0}^{n}\Sigma^{n}(C^{(j)}\sma_{H}C^{(n-j)}))
\iso \bigoplus_{j=0}^{n} \pi_{*}(\Sigma^{n}C^{(n)})
\]
is the diagonal map.
\end{thm}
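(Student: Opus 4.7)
The plan is to exhibit the bar diagonal at the simplicial level via an edgewise subdivision of $B\subdot C$, consistent with this section's use of the operators $\sd$ and $\Sd$. For a simplicial spectrum $X\subdot$, the second edgewise subdivision $\sd X$ (with $(\sd X)_{n}$ built from $X_{2n+1}$) admits a natural homeomorphism $|\sd X| \iso |X|$ of realizations. For $X = B\subdot C$, the $E_{1}$-structure of $C$ (via the face maps of the bar construction) yields a simplicial map
\[
\varphi \colon \sd B\subdot C \to B\subdot C \sma_{H} B\subdot C
\]
sending a $(2n+1)$-simplex of $B\subdot C$, viewed as an $n$-simplex of $\sd B\subdot C$, to the pair formed by its front and back $n$-simplex halves. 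The realization of $\varphi$, followed by a shuffle equivalence $|B\subdot C \sma_{H} B\subdot C| \simeq BC \sma_{H} BC$ and the homeomorphism $|\sd B\subdot C| \iso BC$, is the bar diagonal $\Delta\colon BC \to BC \sma_{H} BC$ under study.

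For the filtration claim, the skeletal filtration of $|\sd B\subdot C|$ matches that of $BC$ under the homeomorphism. On the target, the smash-product filtration $F_{n}(BC \sma_{H} BC) = \bigcup_{j+k \leq n} F_{j} BC \sma_{H} F_{k} BC$ is the skeletal filtration of the diagonal simplicial spectrum $B\subdot C \sma_{H} B\subdot C$. By construction of the front-back splitting, $\varphi$ sends the $n$-skeleton of $\sd B\subdot C$ into the $n$-skeleton of this diagonal, so the realization preserves filtration.

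To identify the induced map on filtration quotients, note that $F_{n}BC/F_{n-1}BC \iso \Sigma^{n}C^{(n)}$ comes from the unique non-degenerate $n$-simplex of $\Delta^{n}$, and the level-$n$ quotient of the smash filtration splits as
\[
\bigvee_{j=0}^{n}\Sigma^{j}C^{(j)}\sma_{H}\Sigma^{n-j}C^{(n-j)} \iso \bigvee_{j=0}^{n}\Sigma^{n}(C^{(j)}\sma_{H}C^{(n-j)}).
\]
A combinatorial analysis of $\sd \Delta^{n}$ shows that its non-degenerate $n$-cells are indexed by splittings $(j,n-j)$ for $j=0,\dotsc,n$, each matching an $n$-cell of $\Delta^{j} \times \Delta^{n-j}$. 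On the coefficient side, $\varphi$ induces at the level of these cells the identity $C^{(n)} \to C^{(j)} \sma_{H} C^{(n-j)} \iso C^{(n)}$, where the second isomorphism is the canonical associativity; this gives the diagonal on filtration quotients.

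The main obstacle is the combinatorial bookkeeping for the cells of $\sd \Delta^{n}$ and their front-back images, verifying that the identity (and not some twisted multiplication map) appears on each summand after passing to the associated graded. Care is also needed to handle the shuffle equivalence $|B\subdot C \sma_{H} B\subdot C| \simeq BC \sma_{H} BC$ in a way compatible with the filtrations, and the partial (rather than strict) $E_{1}$-structure introduces only minor complications, which can be resolved by passing to a strict model via cofibrant replacement.
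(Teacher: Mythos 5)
Your proposal follows the same overall strategy as the paper: realize the bar diagonal as the composite of the subdivision homeomorphism $BC \iso |\sd B\subdot C|$ with (the realization of) a simplicial front-back splitting map, then check filtrations. However, the proposal stops short of carrying out the two steps that constitute the actual content of the proof, and this leaves a genuine gap.

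First, the assertion that ``the skeletal filtration of $|\sd B\subdot C|$ matches that of $BC$ under the homeomorphism'' is precisely the nontrivial part of the filtration claim, and it is not proved. Under the subdivision homeomorphism, the $n$-cells of $|\sd X|$ correspond to pieces of the prismatic decomposition of the $n$-cells of $|X|$, so one must verify that this homeomorphism is cellular for the two filtrations. The paper does this by passing to summation coordinates $u_{i}=s_{0}+\dotsb+s_{i-1}$ and characterizing the cells in both models in those terms; your proposal contains no such argument and does not flag this as a step needing work (your ``main obstacle'' paragraph is about the filtration quotients, not the cellularity of the homeomorphism itself).

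Second, the identification of the induced map on filtration quotients as the diagonal is asserted via ``a combinatorial analysis of $\sd\Delta^{n}$'' but not carried out, and it is not immediate from the front-back description of $\varphi$. Your $\varphi$ in simplicial degree $n$ is a map $(H\vee C)^{(2n+1)}\to (H\vee C)^{(n)}\sma_{H}(H\vee C)^{(n)}$, not a map $C^{(n)}\to C^{(j)}\sma_{H}C^{(n-j)}$; to get the latter one has to track how the $n$-th filtration quotient of $|X|$, which involves $X_{n}$ modulo all degeneracies, maps into the $(j,n-j)$ piece of the double realization, which involves $X_{n+1}$ modulo all degeneracies except $s_{j}$. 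The paper observes that this map is induced by $s_{j}$, and that the augmentation $H\vee C\to H$ on the inserted middle slot then cancels this degeneracy to give back the identity on $C^{(n)}$. Without this degeneracy-plus-augmentation bookkeeping, one cannot conclude the identity (rather than some twisted map) appears, which you yourself identify as the point at issue.

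A minor point: you route through the diagonal simplicial object $|B\subdot C\sma_{H} B\subdot C|$ and a ``shuffle equivalence'' to $BC\sma_{H}BC$. Since the geometric realization of the diagonal of a bisimplicial $H$-module is naturally isomorphic (not just weakly equivalent) to the double realization, this is actually an isomorphism; calling it a shuffle equivalence invites worry about filtration compatibility where none is needed, whereas the paper sidesteps this by working directly with the bisimplicial object $\Sd B\subdot C$ and the double realization.
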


Applying $F_{H}(-,F)$ and using the induced filtration, we get a
multiplicative spectral sequence.  The formula for the map on
filtration quotients above induces the standard multiplication
\[
\pi_{-p-q}F_{H}(C^{(p)},F) \otimes \pi_{-p'-q'}F_{H}(C^{(p')},F)
\to \pi_{-(p+p')-(q+q')}F_{H}(C^{(p+p')},F).
\]
This then proves Theorem~\ref{thmbarssmult}.(ii).

To prove Theorem~\ref{thmdiagonal}, we first recall the construction
of the bar diagonal.  We follow \cite[\S8]{bmthh} and define the bar
diagonal as the composite 
\[
BC \iso |\sd B\subdot C| \to |B\subdot C \sma_{H} B\subdot C|
\iso BC \sma_{H} BC
\]
where $\sd$ denotes the edgewise subdivision construction
of \cite[1.1]{BHM} or \cite[\S4]{GraysonExterior}.
Theorem~\ref{thmdiagonal} follows from the geometric 
properties of the natural isomorphism $|\cdot|\iso |\sd (\cdot)|$, as we
now explain by reviewing the construction in detail.

Recall that for a simplicial object $X$, the edgewise
subdivision $\sd X$ is defined as the simplicial object 
\[
(\sd X)_{n}=X_{2(n+1)-1}
\]
with face map $\del_{i}$ given by $\del_{i}\del_{i+n+1}$ on $X_{2(n+1)-1}$ and
degeneracy map 
$s_{i}$ given by $s_{i}s_{i+n+1}$ on $X_{2(n+1)-1}$.
We note that
$\sd X$ is the diagonal simplicial object of a bisimplicial object
that we will denote by $\Sd X$:  This has $p,q$-simplices given by
\[
(\Sd X)_{p,q}=X_{(p+1)+(q+1)-1}
\]
with face and degeneracy maps $\del_{i}$ and $s_{i}$ given by $\del_{i}$ and
$s_{i}$ in the $p$-direction and by $\del_{i+p+1}$ and $s_{i+p+1}$ in the
$q$-direction. 

These constructions have a formulation in terms of the category of
standard simplices $\DDelta$.  An object of $\DDelta$ is an ordered
set $[n]=\{0,\dotsc,n\}$ and a morphism is a weakly-increasing map.  A
simplicial object is then a contravariant functor out of $\DDelta$ and
a bisimplicial object is a contravariant functor out of $\DDelta \times \DDelta$.
Concatenation (disjoint union) defines a functor $\DDelta \times \DDelta \to \DDelta$,
sending $[p],[q]$ to $[(p+1)+(q+1)-1]$; for a simplicial object $X$,
$\Sd X$ is the bisimplicial object obtain by composing concatenation
with $X$.  The diagonal simplicial object $\sd X$ can also be
described as the composite with $X$ of the doubling functor
$\DDelta\to \DDelta$ (the diagonal followed by concatenation).

The standard $n$-simplex is the simplicial set
$\Delta[n]=\DDelta(-,[n])$, where (in geometric and combinatorial
terms) we view $[n]=\{0,\dotsc,n\}$ as the set of vertices: A
simplicial map from $\Delta[m]$ to $\Delta[n]$ corresponds to the map
$[m]$ to $[n]$ in $\DDelta$ determined by where its vertices go.
We can identify the bisimplicial set $\Sd \Delta[n]$ as the union
\[
(\Delta[0]\times \Delta[n]) \cup (\Delta[1]\times \Delta[n-1]) \cup \dotsb
\cup (\Delta[n]\times \Delta[0]),
\]
glued along boundary pieces
\[
\del_{j} \Delta[j+1]\times \Delta[n-j-1] \sim \Delta[j] \times \del_{0}\Delta[n-j].
\]
We certainly have a map from this union to $\Sd \Delta[n]$ induced by
the order preserving surjections $[j]\amalg [n-j]\to [n]$ (sending the
highest element of $[j]$ and the lowest element of $[n-j]$ to the same
element of $[n]$), and every order
preserving map $[j]\amalg [k]\to [n]$ factors through one of these
(uniquely through the union after making the gluing identifications).
For convenience, we will often write $\Delta[p,q]$ for the
bisimplicial set $\Delta[p]\times \Delta[q]$, the
\term{$(p,q)$-bisimplex}. 

The homeomorphism $|\Delta[n]|\iso ||\Sd \Delta[n]||$ is the
\term{prismatic decomposition} \cite[\S2]{McClureSmith} of
$\Delta[n]$.  The (double) geometric realization of the bisimplex
$||\Delta[j,n-j]||=|\Delta[j]|\times |\Delta[n-j]|$ maps to the
geometric realization of $\Delta[n]$ by
sending the point 
with barycentric coordinates
$(s_{0},\dotsc,s_{j}),(t_{0},\dotsc,t_{n-j})$ to the point with
barycentric coordinates
\[
(s_{0}/2,\dotsc, s_{j-1}/2, s_{j}/2 + t_{0}/2, t_{1}/2,\dotsc, t_{n-j}/2).
\]
It follows from the formula that these maps are compatible with the
gluing and define a continuous map from $||\Sd \Delta[n]||$ to
$|\Delta[n]|$; to see that it is a bijection and therefore a
homeomorphism, we note that the image of $\Delta[j,n-j]$
consists of precisely those points with barycentric coordinates
$(s_{0},\dotsc,s_{n})$ such that
\[
s_{0}+\dotsb +s_{j-1}\leq 1/2\qquad \text{and}\qquad
s_{0}+\dotsb +s_{j}\geq 1/2
\]
(with equality holding for one or the other sum exactly when the point
is on one of the glued boundary pieces).  The inverse homeomorphism 
\[
S_{n}\colon |\Delta[n]|\to ||\Sd \Delta[n]||
\]
is a cellular map for the natural cell structure on the geometric
realization on the left and the double geometric realization on the
right.   This is easy to see 
from a reformulation in terms of \term{summation coordinates}: For
barycentric coordinates $(s_{0},\dotsc,s_{n})$, let 
\[
u_{1}=s_{0}, \quad u_{2}=s_{0}+s_{1},\quad \dotsb, \quad
u_{n}=s_{0}+\dotsb+s_{n-1}
\]
so that $0\leq u_{1}\leq\dotsb \leq u_{n}\leq 1$; the summation
coordinates define a homeomorphism of $|\Delta[n]|$ with the space of
weakly increasing sequences of length $n$ in $[0,1]$.  For
$(s_{0},\dotsc,s_{j}),(t_{0},\dotsc,t_{n-j})$ barycentric coordinates
of a point in $\Delta[j,n-j]=\Delta[j]\times \Delta[n-j]$, we get a weakly
increasing sequence of length $n$ in $[0,1]$ defined by
\begin{equation*}
\begin{split}
&v_{1}=s_{0}/2, \quad v_{2}=(s_{0}+s_{1})/2, \quad\dotsb, 
  \quad v_{j}=(s_{0}+\dotsb +s_{j-1})/2,\\
&v_{j+1}=1/2 + t_{0}/2, 
  \quad \dotsb, \quad v_{n}=1/2 +(t_{0}+\dotsb+t_{n-j-1})/2.
\end{split}
\end{equation*}
Glued points in $|\del_{j} \Delta[j+1]|\times |\Delta[n-j-1]|$ and
$|\Delta[j] \times \del_{0}\Delta[n-j]|$ give the same sequence, and this
formula defines a homeomorphism from $||\Sd \Delta[n]||$ to the space
of weakly increasing sequences of length $n$ in $[0,1]$. For
$\Delta[n]$, the cell structure on the space of sequences corresponds
to sequence entries being zero, one, or equal to the next entry; for
$\Sd \Delta[n]$, the cell structure corresponds to entries being zero,
one, $1/2$, or equal to the next entry.  It follows that the map
$S_{n}$ is cellular.

It is clear from the definition that subdivision takes the 
colimit of simplicial objects to the corresponding colimit of
bisimplicial objects, and it is well known that geometric realization
of simplicial $H$-modules and the double geometric realization of
bisimplicial $H$-modules commutes with colimits.
Since we can write any simplicial $H$-module $X$ as the coequalizer
\[
\xymatrix@C-1pc{%
\relax\displaystyle\bigvee_{f\colon [m]\to [n]} X_{n}\sma \Delta [m]_{+}
\ar[r]<1.75ex>\ar[r]<.75ex>
&\relax\displaystyle\bigvee_{[n]}X_{n}\sma \Delta[n]_{+}\ar[r]<1.25ex>
&X,\vphantom{\displaystyle\bigvee_{[n]}}
}
\]
we can write $|X|$ and $||\Sd X||$ as coequalizers
\begin{gather*}
\xymatrix@C-1pc{%
\relax\displaystyle\bigvee_{f\colon [m]\to [n]} X_{n}\sma |\Delta [m]|_{+}
\ar[r]<1.75ex>\ar[r]<.75ex>
&\relax\displaystyle\bigvee_{[n]}X_{n}\sma |\Delta[n]|_{+}\ar[r]<1.25ex>
&|X|\vphantom{\displaystyle\bigvee_{[n]}}
}\\
\xymatrix@C-1pc{%
\relax\displaystyle\bigvee_{f\colon [m]\to [n]} X_{n}\sma ||\Sd\Delta [m]||_{+}
\ar[r]<1.75ex>\ar[r]<.75ex>
&\relax\displaystyle\bigvee_{[n]}X_{n}\sma ||\Sd\Delta[n]|_{+}\ar[r]<1.25ex>
&||\Sd X||.\vphantom{\displaystyle\bigvee_{[n]}}
}
\end{gather*}
The concrete description of the homeomorphisms $S_{n}\colon |\Delta[n]|\to
||\Sd \Delta[n]||$ above makes it clear that they are
compatible with the maps in the coequalizer and induce an isomorphism
$S\colon |X|\to ||\Sd X||$.  We see from the naturality of the
coequalizer diagrams that $S$ is natural.  

Regarding the coequalizers above as taking place in the category of
filtered $H$-modules (or even doubly filtered $H$-modules for $\Sd
X$), we see that $S$ preserves filtrations since the maps $S_{n}$ do. 
In order to understand the map on filtration quotients, it is useful to
observe how the pieces $X_{n}\sma ||\Sd \Delta[n]||$ fit into the
usual description of the double geometric realization built in terms
of the pieces 
\[
\Sd X_{p,q} \sma ||\Delta[p,q]||_{+}=X_{p+q+1}\sma ||\Delta[p,q]||_{+}.
\]
As indicated above, this $(p,q)$ piece arises from the surjection
\[
[p+q+1] = [p] \amalg [q] \to [p+q],
\]
which in these terms corresponds to the degeneracy $s_{p}$.
Naturality now implies that the $X_{n}\sma ||\Delta[j,n-j||_{+}$ piece
of $X_{n}\sma ||\Sd \Delta[n]||_{+}$ in the coequalizer above maps to
the $X_{n+1}\sma ||\Delta[j,n-j]||_{+}$ piece of the double geometric
realization via the map $s_{j}\colon X_{n}\to X_{n+1}$.
The
induced map on filtration quotients therefore sends the $n$-th
filtration quotient 
\[
X_{n}/(s_{0},\dotsc,s_{n}) \sma |\Delta[n]/\partial|
\]
of $|X|$ to the $n$-th filtration quotient
\begin{multline*}
X_{n+1}/(s_{1},\dotsc,s_{n}) \sma ||\Delta[0,n]/\partial||
\ \vee\ 
X_{n+1}/(s_{0},s_{2},\dotsc,s_{n})\sma ||\Delta[1,n-1]/\partial||
\ \vee\ \dotsb\\ \ \vee\ 
X_{n+1}/(s_{0},s_{1},\dotsc,s_{n-1})\sma ||\Delta[n,0]/\partial||
\end{multline*}
of $||\Sd X||$ 
by the map induced by $S_{n}$ and the degeneracy $s_{j}$ on the
$\Delta[j,n-j]$ summand. 

In the case when $X=BC$, we have 
\begin{gather*}
X_{n}/(s_{0},\dotsc,s_{n})=C^{(n)},\\
X_{n+1}/(s_{0},\dotsc,s_{j-1},s_{j+1},\dotsc,s_{n})
=C^{(j)}\sma_{H} (H\vee C) \sma_{H} C^{(n-j)}
\end{gather*}
and the map is induced by the isomorphism $C^{(n)}\iso
C^{(j)}\sma_{H}H\sma_{H}C^{(n-j)}$.  The bisimplicial map from $\Sd
B\subdot C$ to $B\subdot C\sma_{H} B\subdot C$ in bidegree $p,q$ is
the map 
\[
(H\vee C)^{(p+q+1)}\to (H\vee C)^{(p)}\sma_{H}(H\vee C)^{(q)}
\]
induced by the augmentation $H\vee C\to H$ (using the trivial map
$C\to *$) on the $(p+1)$-st factor.  The composite map $BC\to
BC\sma_{H} BC$ therefore induces on $n$-th filtration quotient the map
\[
C^{(n)}\sma |\Delta[n]/ \partial|\to
C^{(0)}\sma_{H}C^{(n)}\sma ||\Delta[0,n]/\partial||
\vee \dotsb \vee C^{(n)}\sma_{H}C^{(0)}\sma ||\Delta[n,0]/\partial||
\]
induced by $S_{n}$ and the isomorphisms $C^{(n)}\iso
C^{(j)}\sma_{H}C^{(n-j)}$.  On homotopy groups this induces the
diagonal map as in the statement of Theorem~\ref{thmdiagonal}.
This completes the proof of Theorem~\ref{thmdiagonal}.

\begin{rem}
In \cite[\S8]{bmthh}, we used the subdivision isomorphism $|\sd X|\iso
|X|$ defined in
\cite[1.1]{BHM} because of its multiplicative properties (see
\cite[8.1]{bmthh}); a check of formulas shows that the
isomorphism $|X|\iso ||\Sd X||\iso |\sd X|$ constructed above is its
inverse, q.v.~\cite[\S4]{GraysonExterior}.
\end{rem}


\begin{rem}
The work above can also be formulated in terms of 
Drinfeld's description of the geometric realization
\cite{DrinfeldGeoReal}, which writes
\[
|X|=\colim_{F\subset I}X(\pi_{0}(I\setminus F))
\]
as the colimit over the finite subsets $F$ of the unit interval $I=[0,1]$, 
where we regard $\pi_{0}(I\setminus F)$ as a finite ordered set, i.e.,
\[
X(\pi_{0}(I\setminus F))=X_{\#\pi_{0}(I\setminus F)-1}.
\]
The double geometric realization is then
\[
||\Sd X|| = \colim_{F_{1},F_{2}\subset I}
X(\pi_{0}(I\setminus F_{1})\amalg \pi_{0}(I\setminus F_{2})).
\]
The isomorphism $S\colon |X|\to ||\Sd X||$ defined above is induced by
the map of diagrams sending 
$F=\{0\leq u_{1}< u_{2}<\dotsb <u_{n}\leq 1\}$ 
to 
\begin{align*}
F_{1}&=\{0\leq v_{1}<\dotsb <v_{j}<1\}=
  \{0\leq 2u_{1}<\dotsb <2u_{j}< 1\}\\
F_{2}&=\{0\leq w_{1}<\dotsb <w_{n-j}\leq 1\}=
  \{0\leq (2u_{j+1}-1) <\dotsb <(2u_{n}-1)\leq 1 \},
\end{align*}
together with the natural transformation 
\[
X(\pi_{0}(I\setminus F))\to
X(\pi_{0}(I\setminus F_{1}) \amalg \pi_{0}(I\setminus F_{2}))
\]
induced by the map of ordered sets
\begin{multline*}
\pi_{0}([0,1]\setminus \{u_{1},\dotsc,u_{n}\}) \from
\pi_{0}([0,1] \setminus \{u_{1},\dotsc, u_{j}, 1/2, 
  u_{j+1},\dotsc u_{n}\})\\
\iso \pi_{0}( ([0,1]\setminus \{v_{1},\dotsc,v_{j}\})
\amalg ([0,1]\setminus \{w_{1},\dotsc,w_{n-j}\})).
\end{multline*}
After verifying that the induced map is continuous, the remaining
verifications are essentially the same as the ones above written in 
terms of summation coordinates.
\end{rem}



\bibliographystyle{plain}\let\bibcomma\relax

\def\noopsort#1{}\def\MR#1{}
\def\preprint{\ifx\bibcomma\undefined Preprint\else preprint\fi}
\providecommand{\bysame}{\leavevmode\hbox to3em{\hrulefill}\thinspace}
\providecommand{\MR}{\relax\ifhmode\unskip\space\fi MR }
\providecommand{\MRhref}[2]{%
  \href{http://www.ams.org/mathscinet-getitem?mr=#1}{#2}
}
\providecommand{\href}[2]{#2}

\end{document}